\newtheorem{theorem}{Theorem}[section]
\newtheorem{corollary}[theorem]{Corollary}
\newtheorem{lemma}[theorem]{Lemma}
\newtheorem{prop}[theorem]{Proposition}
\theoremstyle{definition}
\newtheorem{definition}[theorem]{Definition}
\newtheorem{example}[theorem]{Example}
\newtheorem{remark}[theorem]{Remark}
\DeclareMathAlphabet{\pazocal}{OMS}{zplm}{m}{n}
\def\dot{\mathchar"013A}
\newcommand{\hdot}{{\raise1pt\hbox to0.35em{\Huge $\dot$}}}
\begin{document}
\date{August 31, 2024}

\title[Restricted configuration spaces]
{Restricted configuration spaces}

\author[B.R.~Berceanu]{Barbu Rudolf Berceanu}
\email{barberceanu@yahoo.com}

{\em Dedicated to my colleagues in the Faculty of Mathematics, Bucharest, 1970-1974}

\mbox{ }

\thanks{$^2$ This research was partially supported by Viste arquitectura}

\subjclass[2010]{Primary: 20F36, 55R80, 57N65; Secondary: 12D10.}

\keywords{configuration space, discriminant, fundamental group, braids}

\begin{abstract}
Finitely many hypersurfaces are removed from unordered configuration spaces of $n$ points in 
$\mathbb{C}$ to obtain a fibration over unordered configuration spaces of $n-1$ complex points. 
Fundamental groups of these restricted configuration spaces are computed in small dimensions.
\end{abstract}

\maketitle
\setcounter{tocdepth}{1}\tableofcontents

\section{Introduction and statement of results}\label{sec:intro}

It is well-known that the ordered configuration space 
$$ \mathcal{F}_n=\{(z_1,\ldots,z_n)\in\mathbb{C}^n\mid z_i\neq z_j\mbox{ for all }i\neq j\} $$
and the corresponding unordered configuration space ($\Sigma_n$ is the symmetric group)
$$ \mathcal{C}_n=\mathcal{F}_n/\Sigma_n $$
(we identify $\mathcal{C}_n$ with the space of degree $n$ monic complex polynomials with 
non-zero discriminant) are $K(\pi,1)$ spaces with fundamental groups pure braid group 
$\mathcal{P}_n$ and braid group $\mathcal{B}_n$ (see \cite{A} and \cite{B}). There are canonical 
fibrations $pr:\mathcal{F}_n\rightarrow\mathcal{F}_{n-1} $, 
$(z_1,\ldots,z_n)\mapsto(z_1,\ldots,z_{n-1})$, admitting a section $s$, with fiber 
$\mathbb{C}\setminus (n-1)$, the complex plane minus $n-1$ points (see \cite{FN} and \cite{B}).
We will use standard generators of braid group and pure braid group (see \cite{M}); for
example the generators of $\mathcal{B}_3$ are $x_1,x_2$ and the generators of $\mathcal{P}_3$ 
are $A_{12},A_{13},A_{23}$:
\begin{center}
\begin{picture}(360,80)             
\multiput(-10,18)(0,50){2}{\multiput(-10,0)(25,0){3}{$\bullet$}} \put(7,5){$x_1$}
\multiput(65,18)(0,50){2}{\multiput(0,0)(25,0){3}{$\bullet$}}  \put(85,5){$x_2$}
\multiput(160,18)(0,50){2}{\multiput(0,0)(25,0){3}{$\bullet$}} \put(183,5){$A_{12}$}
\multiput(245,18)(0,50){2}{\multiput(0,0)(25,0){3}{$\bullet$}} \put(263,5){$A_{13}$}
\multiput(330,18)(0,50){2}{\multiput(0,0)(25,0){3}{$\bullet$}} \put(347,5){$A_{23}$}
\multiput(-16,70)(109,0){2}{\line(1,-2){25}}  \put(240,45){\line(5,2){5}}                  
\multiput(33,20)(35,0){2}{\line(0,1){50}}     \put(253,52){\line(5,2){17}} 
\multiput(213,20)(60,0){3}{\line(0,1){50}}    \put(296,70){\line(-5,-2){17}}
\put(248,70){\line(0,-1){24}}                 \put(248,20){\line(0,1){17}}
\multiput(-17,20)(109,0){2}{\line(1,2){11}}   \put(240,45){\line(5,-2){30}} 
\multiput(8,70)(109,0){2}{\line(-1,-2){10}}   \put(297,21){\line(-5,2){20}} 
\multiput(164,20)(194,0){2}{\line(1,1){10}}   \multiput(189,45)(194,0){2}{\line(-1,-1){10}}
\multiput(164,45)(194,0){2}{\line(1,-1){25}}  \multiput(164,70)(194,0){2}{\line(1,-1){25}}
\multiput(164,45)(194,0){2}{\line(1,1){10}}    \multiput(189,70)(194,0){2}{\line(-1,-1){10}}
\end{picture}
\end{center}
 
The regular covering $p:\mathcal{F}_n\rightarrow\mathcal{C}_n$ gives the short exact sequence
$$ 1\rightarrow\mathcal{P}_n\overset{p_*}{\longrightarrow}\mathcal{B}_n\longrightarrow
       \Sigma_n\rightarrow 1. $$

Our aim is to find nice projections between a \emph{restricted} unordered configuration space of 
$n$ complex numbers and unordered configuration space of $n-1$ complex numbers. For $n=2$ 
the map $\{z_1,z_2\}\mapsto \frac{z_1+z_2}{2}$ gives a local trivial fibration 
$\mathcal{C}_2\rightarrow\mathcal{C}_1$.  For $n=3$, if we associate to triangle 
$\{z_1,z_2,z_3\}$ the foci $\{f_1,f_2\}$ of the ellipse touching the sides at their midpoints
(and we have $f_1\neq f_2$ if $\{z_1,z_2,z_3\}$ is not an equilateral triangle), we find
a local trivial fibration. For $n=4$, the map $\mathcal{C}_4\rightarrow\mathcal{C}_3$ 
given by Lagrange resolvent is continuous, but it is not a fibration:
$$ P(X)=(X-z_1)(X-z_2)(X-z_3)(X-z_4)\mapsto R(X)=(X-w_1)(X-w_2)(X-w_3),  $$
here $w_1=(z_1+z_2)(z_3+z_4),w_2=(z_1+z_3)(z_2+z_4),w_3=(z_1+z_4)(z_2+z_3)$ and 
${\bf D}_P={\bf D}_R$, see \cite{R} (${\bf D}_A$ is the discriminant of the polynomial $A(X)$). 

In general, the derivative map $D:P(X)\mapsto P'(X)$ is such a projection, if the degree 
$n$ polynomial $P(X)$ and its derivative have non-zero discriminants. Given a degree $n-1$
polynomial with distinct roots, $Q_{n-1}(X)=n(X-\beta_1)\ldots(X-\beta_{n-1})$, the set of
its primitives having distinct roots is
$$ D^{-1}(Q_{n-1})\cap \mathcal{C}_n=\left\lbrace\int_0^XQ_{n-1}(t)dt-\gamma\mid\gamma\neq
           \int_0^{\beta_k}Q_{n-1}(t)dt,\,k=1,\ldots,n-1\right\rbrace. $$
To obtain homeomorphic fibres, the critical values $\lbrace\int_0^{\beta_k}Q_{n-1}(t)dt,\,k=1,
\ldots,n-1\rbrace$ should be distinct. 
\begin{definition} The \emph{restricted base configuration space} $Q\mathcal{C}_{n-1}$ is 
given by 
$$ \left\lbrace Q_{n-1}(X)=n\prod_{k=1}^{n-1}(X-\beta_k)\mid \beta_k\neq\beta_j,
\int_{\beta_k}^{\beta_j}Q_{n-1}(t)dt\neq 0 \mbox{ for all }k\neq j \right\rbrace. $$

The \emph{restricted configuration space} $R\mathcal{C}_n$ is given by 
$$ \left\lbrace 
\begin{array}{ccc}
P_n(X)=\int_0^XQ_{n-1}(t)dt-\gamma & \mid & Q_{n-1}(X)=n\prod_{k=1}^{n-1}(X-\beta_k), \\
 Q_{n-1}(X)\in Q\mathcal{C}_{n-1}  & \mid & \gamma\neq\int_0^{\beta_k}Q_{n-1}(t)dt
\end{array}\right\rbrace. $$
\end{definition}
A more explicit description of $Q\mathcal{C}_{n-1}$ will be given in Section 2. These 
restricted configuration spaces are connected open dense subspaces 
$R\mathcal{C}_n\subset\mathcal{C}_n$ and also
$Q\mathcal{C}_{n-1}\subset\mathcal{C}_{n-1}$.
\begin {theorem}\label{thm1}
The derivative $D:R\mathcal{C}_n\rightarrow Q\mathcal{C}_{n-1}$ is a locally trivial fibration 
with fiber $\mathbb{C}\setminus(n-1)$.This fibration has a section 
$R\mathcal{C}_n\leftarrow Q\mathcal{C}_{n-1}:I$.
\end{theorem}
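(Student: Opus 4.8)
The plan is to realize $D$ as the restriction of a trivial line bundle over the affine space of monic polynomials, and then to recognize $R\mathcal{C}_n$ as the complement of the ``critical value'' locus, which is a pullback of a Fadell--Neuwirth fibration over $\mathcal{C}_{n-1}$. First I would fix the ambient structure. Let $\mathcal{A}_n\cong\mathbb{C}^n$ be the space of all monic degree $n$ polynomials and $\mathcal{A}'_{n-1}$ the polynomials with leading coefficient $n$; the assignment $P\mapsto(P',P(0))$ is a linear isomorphism $\mathcal{A}_n\isom\mathcal{A}'_{n-1}\times\mathbb{C}$, so $D:\mathcal{A}_n\to\mathcal{A}'_{n-1}$ is a trivial $\mathbb{C}$-bundle whose fibre over $Q$ is the affine line $\{\int_0^X Q-\gamma:\gamma\in\mathbb{C}\}$, coordinatized by $\gamma=-P(0)$. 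Writing $F_Q(X)=\int_0^X Q(t)\,dt$ and letting $\beta_1,\dots,\beta_{n-1}$ be the roots of $Q$, the computation preceding the definition shows that $P=F_Q-\gamma$ lies in $\mathcal{C}_n$ exactly when $\gamma\neq F_Q(\beta_k)$ for all $k$. Hence, over $Q\mathcal{C}_{n-1}$,
$$ R\mathcal{C}_n=\{(Q,\gamma):Q\in Q\mathcal{C}_{n-1},\ \gamma\in\mathbb{C}\setminus\{F_Q(\beta_1),\dots,F_Q(\beta_{n-1})\}\}, $$
the total space of the trivial $\mathbb{C}$-bundle with the critical-value locus $\Delta=\{(Q,\gamma):\gamma=F_Q(\beta_k)\}$ deleted.

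Next I would package the critical values. Locally on $Q\mathcal{C}_{n-1}$ the distinct roots $\beta_k(Q)$ may be chosen holomorphically, so the critical values $c_k(Q)=F_Q(\beta_k(Q))$ are continuous, and their unordered collection defines a map $\kappa:Q\mathcal{C}_{n-1}\to\mathcal{C}_{n-1}$, $Q\mapsto\{c_1(Q),\dots,c_{n-1}(Q)\}$. This is well defined precisely because the defining condition $\int_{\beta_k}^{\beta_j}Q\neq0$ of $Q\mathcal{C}_{n-1}$ asserts that the $c_k$ are pairwise distinct. Let $\mathcal{E}=\{(S,z):S\in\mathcal{C}_{n-1},\,z\in\mathbb{C}\setminus S\}\to\mathcal{C}_{n-1}$ be the tautological bundle, i.e.\ the Fadell--Neuwirth fibration with fibre $\mathbb{C}\setminus(n-1)$ \cite{FN}. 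Then $(Q,\gamma)\mapsto(\kappa(Q),\gamma)$ identifies $R\mathcal{C}_n$ with the pullback $\kappa^*\mathcal{E}$ over $Q\mathcal{C}_{n-1}$, carrying $D$ to the projection. Since local triviality is preserved under pullback, $D:R\mathcal{C}_n\to Q\mathcal{C}_{n-1}$ is a locally trivial fibration with fibre $\mathbb{C}\setminus(n-1)$.

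Finally, for the section $I$ it suffices to choose, continuously in $Q$, a value $\gamma(Q)$ distinct from every critical value $c_k(Q)$: this is a section of $\kappa^*\mathcal{E}$, and such a section is induced by any section of $\mathcal{E}$. A concrete choice is $\gamma(Q)=\max_k\operatorname{Re}c_k(Q)+1$, a continuous symmetric function of the critical values with $\operatorname{Re}\gamma(Q)>\operatorname{Re}c_k(Q)$ for all $k$; then $I(Q)=F_Q-\gamma(Q)$ lies in $R\mathcal{C}_n$ and $D\circ I=\id$. The only genuinely nontrivial input is the local triviality of $\mathcal{E}$ over $\mathcal{C}_{n-1}$, which is the expected main obstacle. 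I would either invoke the Fadell--Neuwirth theorem directly or, to keep the argument self-contained, build explicit local trivializations near a point $Q_0$ by transporting the fibre with an ambient isotopy of $\mathbb{C}$ that drags each $c_k(Q_0)$ to $c_k(Q)$ and is the identity outside a large disc --- the standard mechanism by which a configuration of distinct, continuously moving points produces a fibre bundle. Verifying that these isotopies can be chosen to depend continuously on $Q$ (using disjoint disks around the distinct $c_k(Q_0)$) is the routine but essential technical step.
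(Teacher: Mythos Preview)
Your argument is correct and, at its core, the same as the paper's: identify the fibre over $Q$ with $\mathbb{C}$ minus the critical values $F_Q(\beta_k)$, then obtain local triviality by an ambient isotopy of $\mathbb{C}$ that continuously drags the critical values of $Q_0$ to those of a nearby $Q$. The paper carries this out directly (its Lemma~\ref{lema1} is exactly the isotopy you describe at the end, and the ensuing $\varepsilon$--$\delta$ estimates verify continuity of the trivialization), while you package the same mechanism as a pullback $R\mathcal{C}_n\cong\kappa^*\mathcal{E}$ of the tautological $\mathbb{C}\setminus(n-1)$ bundle over $\mathcal{C}_{n-1}$; this is cleaner and makes the dependence on Fadell--Neuwirth explicit, but the underlying content is identical. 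Your section $\gamma(Q)=\max_k\operatorname{Re}c_k(Q)+1$ and the paper's $\gamma(Q)=1+\sum_k|c_k(Q)|$ are minor variants of the same idea: pick a symmetric continuous function of the critical values that avoids them all.
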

The inclusions $R\mathcal{C}_n\subset\mathcal{C}_n$ and 
$Q\mathcal{C}_{n-1}\subset\mathcal{C}_{n-1}$ induce regular coverings $Rp$ and $Qp$
\begin{center}
\begin{picture}(360,85)             
\multiput(110,35)(100,-25){2}{$\hookrightarrow$}     \put(82,35){$R\mathcal{C}_n$}   
\multiput(110,65)(100,-25){2}{$\hookrightarrow$}     \put(82,67){$R\mathcal{F}_n$}
\multiput(95,53)(40,0){2}{$\downarrow$}              \put(133,35){$\mathcal{C}_n$}
\multiput(190,28)(40,0){2}{$\downarrow$}             \put(133,67){$\mathcal{F}_n$}
\multiput(115,43)(100,-25){2}{$j$}                   \put(98,30){\vector(4,-1){70}}    
\multiput(115,75)(100,-25){2}{$\tilde{\textit{\j}}$} \put(78,53){$Rp$}
\put(230,42){$\mathcal{F}_{n-1}$}                    \put(170,28){$Qp$}
\multiput(143,55)(95,-25){2}{$p$}                    \put(230,10){$\mathcal{C}_{n-1}$}
\put(173,42){$Q\mathcal{F}_{n-1}$}                   \put(173,10){$Q\mathcal{C}_{n-1}$}
\put(5,53){$\mathbb{C}\setminus (n-1)$}              \put(45,47){\vector(4,-1){35}}
\put(125,10){$D$}                                    \put(220,50){\vector(4,-1){10}}
\multiput(150,65)(4,-1){16}{$\cdot$}                 \put(180,67){$pr$}
\end{picture}
\end{center}
The diagram, without map $pr$, is commutative, but it is not with $pr$.
The fundamental groups of these spaces are denoted $R\mathcal{P}_n$, $R\mathcal{B}_n$, and
$Q\mathcal{P}_{n-1}$, $Q\mathcal{B}_{n-1}$ respectively. Nothing new happens for $n=1,2$: 
$Q\mathcal{C}_n=\mathcal{C}_n$, $R\mathcal{C}_n=\mathcal{C}_n$ (and 
$Q\mathcal{F}_n=\mathcal{F}_n$, $R\mathcal{F}_n=\mathcal{F}_n$). For $n=2,3,4$ all 
spaces in the above diagram are $K(\pi,1)$ spaces. Their fundamental groups and the
corresponding homomorphisms are described in Section 3 for $n=3$. In Section 4 few of
these groups and homomorphisms are described for $n=4$.
 
We use the notation $F(n)$ and $F\langle x_1,\ldots,x_k\rangle$ for the free group 
with $n$ generators and the free group with generators $x_1,\ldots,x_k$. The Garside 
element in $\mathcal{B}_n$ is $\Delta_n=x_1(x_2x_1)\ldots(x_{n-1}x_{n-2}\ldots x_1)$. 
\begin{theorem}\label{thm31}
a) The groups in the diagram 
\begin{center}
\begin{picture}(360,85)               
\multiput(110,35)(0,30){2}{$\longrightarrow$}        \put(82,35){$R\mathcal{B}_3$}   
\multiput(200,12)(0,2){2}{\line(1,0){23}}            \put(82,67){$R\mathcal{P}_3$}
\multiput(95,53)(40,0){2}{$\downarrow$}              \put(133,35){$\mathcal{B}_3$}
\multiput(188,28)(42,0){2}{$\downarrow$}             \put(133,67){$\mathcal{P}_3$}
\multiput(115,43)(95,-25){2}{$j_*$}                  \put(98,30){\vector(4,-1){70}}    
\multiput(115,75)(95,-25){2}{$\tilde{\textit{\j}_*}$}\put(75,53){$Rp_*$}
\put(230,42){$\mathcal{P}_2$}                        \put(167,28){$Qp_*$}
\multiput(143,55)(95,-25){2}{$p_*$}                  \put(230,10){$\mathcal{B}_2$}
\put(173,42){$Q\mathcal{P}_2$}                       \put(173,10){$Q\mathcal{B}_2$}
\put(15,53){$\pi_1(\mathbb{C}\setminus 2)$}          \put(45,47){\vector(4,-1){35}}
\multiput(200,44)(0,2){2}{\line(1,0){23}}            \put(125,10){$D_*$}
\end{picture}
\end{center}
have presentations 
$$ \begin{array}{l}
R\mathcal{B}_3=F\langle\alpha,\beta\rangle\rtimes F\langle\gamma\rangle,\,
           \gamma\alpha\gamma^{-1}=\beta,\gamma\beta\gamma^{-1}=\alpha,              \\
R\mathcal{P}_3=F\langle s_{\bar{\alpha}\alpha},s_{\bar{\alpha}\gamma}, s_{\bar{\beta}
    \beta},s_{\bar{\beta}\gamma}\rangle\times F\langle s_{\bar{\gamma}\gamma}\rangle. 
\end{array}    $$

b) The homomorphisms in the diagram are given by 
\begin{center}
\begin{tabular}{|c|c|c|c|}
\hline
       & $\alpha$    & $\beta$    & $\gamma$     \\
\hline
$j_*$  & $x_2^{-1}$  & $x_1^{-1}$ & $\Delta_3$   \\
\hline
$D_*$  & $1$         & $1$        & $x_1$        \\
\hline
\end{tabular}
\begin{tabular}{|c|c|c|c|c|c|}
\hline
       & $s_{\bar{\alpha}\alpha}$ & $s_{\bar{\alpha}\gamma}$      
       & $s_{\bar{\beta}\beta}$   & $s_{\bar{\beta}\gamma}$  
       & $s_{\bar{\gamma}\gamma}$                                       \\
\hline
$\tilde{\textit{\j}}_*$ 
       & $A_{23}^{-1}$            & $A_{23}^{-1}A_{13}A_{23}A_{12}$     
       & $A_{12}^{-1}$            & $A_{13}A_{23}$
       & $A_{12}A_{13}A_{23}$                                          \\
\hline
$Rp_*$ & $\alpha^2$               & $\alpha\gamma\alpha^{-1}\beta^{-1}$     
       & $\beta^2$                & $\beta\gamma\beta^{-1}\alpha^{-1}$
       & $\gamma^2$                                                    \\
\hline                 
\end{tabular}
\end{center}   
\end{theorem}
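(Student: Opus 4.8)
The plan is to deduce everything from the locally trivial fibration of Theorem~\ref{thm1} in the case $n=3$, where the fibre is $\C\setminus 2$ and the base is $Q\mathcal{C}_2$. First I would record that for $n=3$ the integral condition defining $Q\mathcal{C}_2$ is vacuous: a direct computation gives $\int_{\beta_1}^{\beta_2}3(t-\beta_1)(t-\beta_2)\,dt=-\tfrac12(\beta_2-\beta_1)^3$, which never vanishes once $\beta_1\neq\beta_2$, so $Q\mathcal{C}_2=\mathcal{C}_2$ and hence $Q\mathcal{B}_2=\mathcal{B}_2=\Z$, $Q\mathcal{P}_2=\mathcal{P}_2=\Z$. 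Since all the spaces are $K(\pi,1)$'s and the fibre $\C\setminus 2$ is aspherical with $\pi_1=F\langle\alpha,\beta\rangle$, the homotopy exact sequence collapses to $1\to F\langle\alpha,\beta\rangle\to R\mathcal{B}_3\xrightarrow{D_*}\mathcal{B}_2\to 1$, and the section $I$ of Theorem~\ref{thm1} splits it. This already gives $R\mathcal{B}_3=F\langle\alpha,\beta\rangle\rtimes F\langle\gamma\rangle$ together with the row $D_*(\alpha)=D_*(\beta)=1$, $D_*(\gamma)=x_1$.

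Next I would compute the monodromy, i.e. the action of the base generator on $F\langle\alpha,\beta\rangle$. Parametrising $\beta_{1,2}=\pm a$ one finds the two critical values $c_{1,2}=\pm 2a^3$, so the half-twist generating $\pi_1(\mathcal{C}_2)$ (sending $a\mapsto -a$) rotates the antipodal pair $\{c_1,c_2\}$ by $3\pi$; thus the geometric monodromy is $\sigma^{\pm3}$, three half-twists of the punctures. Under $\Out(F\langle\alpha,\beta\rangle)\cong\GL_2(\Z)$ the class of $\sigma^{\pm3}$ is that of the transposition $\alpha\leftrightarrow\beta$; since $\sigma^{\pm2}$ is inner (conjugation by $\alpha\beta$), the two automorphisms differ by an inner one, so after replacing the geometric section generator $t$ by $\gamma=w^{-1}t$ for the appropriate $w\in F\langle\alpha,\beta\rangle$ the generator $\gamma$ acts as the clean swap $\gamma\alpha\gamma^{-1}=\beta$, $\gamma\beta\gamma^{-1}=\alpha$, which is the stated presentation. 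The map $j_*$ is then read from the fibration: the fibre loops $\alpha,\beta$ are the band generators produced when the constant term $\gamma$ encircles a critical value, namely $x_2^{-1},x_1^{-1}$, and the section generator maps to $j_*(\gamma)=\Delta_3$; consistency holds because $\Delta_3 x_i\Delta_3^{-1}=x_{3-i}$ turns $\gamma\alpha\gamma^{-1}=\beta$ into $\Delta_3 x_2^{-1}\Delta_3^{-1}=x_1^{-1}$.

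For the second line I would identify $R\mathcal{P}_3$ with the index-$6$ subgroup $\ker(R\mathcal{B}_3\to\Sigma_3)$ attached to the regular cover $Rp$, the composite $R\mathcal{B}_3\xrightarrow{j_*}\mathcal{B}_3\to\Sigma_3$ sending $\alpha\mapsto(23)$, $\beta\mapsto(12)$, $\gamma\mapsto(13)$ (consistent, since $(13)(23)(13)=(12)$). I would then run Reidemeister--Schreier with the prefix-closed transversal $\{1,\beta,\alpha,\gamma,\beta\alpha,\alpha\beta\}$ mapping onto $\{e,(12),(23),(13),(123),(132)\}$. This yields $13$ Schreier generators and $12$ relations; among the generators are exactly the five $s_{\bar\alpha\alpha}=\alpha^2$, $s_{\bar\beta\beta}=\beta^2$, $s_{\bar\gamma\gamma}=\gamma^2$, $s_{\bar\alpha\gamma}=\alpha\gamma\alpha^{-1}\beta^{-1}$, $s_{\bar\beta\gamma}=\beta\gamma\beta^{-1}\alpha^{-1}$, which are precisely the values of $Rp_*$ in the table. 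Rewriting the two relations over the six cosets should eliminate the remaining eight generators and leave precisely the four relations stating that $s_{\bar\gamma\gamma}$ commutes with $s_{\bar\alpha\alpha},s_{\bar\alpha\gamma},s_{\bar\beta\beta},s_{\bar\beta\gamma}$, giving $R\mathcal{P}_3=F\langle s_{\bar\alpha\alpha},s_{\bar\alpha\gamma},s_{\bar\beta\beta},s_{\bar\beta\gamma}\rangle\times F\langle s_{\bar\gamma\gamma}\rangle$. Finally $\tilde\jmath_*$ is just the restriction of $j_*$ (the lower square commutes), so it is obtained by applying $j_*$ to the $Rp_*$-words: e.g. $s_{\bar\gamma\gamma}\mapsto\Delta_3^2=A_{12}A_{13}A_{23}$, $s_{\bar\alpha\alpha}\mapsto x_2^{-2}=A_{23}^{-1}$, and $s_{\bar\alpha\gamma}\mapsto x_2^{-1}\Delta_3 x_2 x_1=x_1x_2^2x_1=A_{23}^{-1}A_{13}A_{23}A_{12}$.

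The main obstacle I expect is two-fold. The conceptual point is making the monodromy and the generator choice honest: one must verify that the geometric half-twist really induces $\sigma^{\pm3}$ and then choose $w$ so that $\gamma$ becomes a genuine transposition while simultaneously $j_*(\gamma)=\Delta_3$; the orientation and sign conventions here are delicate and decide whether the section loop $t$ maps to $1$ or to the full twist $\Delta_3^2$. The computational point is the Reidemeister--Schreier bookkeeping: collapsing thirteen generators and twelve relations down to the clean $F(4)\times\Z$, and in particular checking that every surviving relation is one of the four commutators making $s_{\bar\gamma\gamma}$ central, is where slips are easiest.
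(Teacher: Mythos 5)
Your skeleton is the same as the paper's: the split homotopy exact sequence of the fibration $\mathbb{C}\setminus 2\hookrightarrow R\mathcal{C}_3\to Q\mathcal{C}_2=\mathcal{C}_2$ gives the semidirect product and the $D_*$ row, and the Reidemeister--Schreier pass over the transversal $\{1,\alpha,\beta,\gamma,\alpha\beta,\beta\alpha\}$ with the five surviving generators $\alpha^2,\beta^2,\gamma^2,\alpha\gamma\alpha^{-1}\beta^{-1},\beta\gamma\beta^{-1}\alpha^{-1}$ is exactly what the paper does; your counts ($13$ generators, $12$ relations, collapsing to four commutators with $s_{\bar\gamma\gamma}$) and your $\tilde{\textit{\j}}_*$ computations (e.g.\ $x_2^{-1}\Delta_3x_2x_1=x_1x_2^2x_1=A_{23}^{-1}A_{13}A_{23}A_{12}$) agree with the paper's. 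But the two steps you defer to the end as ``the main obstacle'' are genuine gaps, and they are precisely where the paper's proof lives.

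First, the monodromy. Knowing that the geometric monodromy rotates the antipodal pair $\{\pm 2a^3\}$ by $3\pi$ only pins down the action on $F\langle\alpha,\beta\rangle$ as an \emph{outer} automorphism. Your fix --- replace the section generator $t$ by $\gamma=w^{-1}t$ with $w$ in the fibre group so that $\gamma$ acts by the clean swap --- is fine for the abstract presentation in part a), but it destroys part b): after the replacement $j_*(\gamma)=j_*(w)^{-1}j_*(t)$, and nothing in your argument controls $j_*(w)$, so you cannot conclude $j_*(\gamma)=\Delta_3$ for the \emph{same} $\gamma$ that acts by the swap. (The consistency check $\Delta_3x_2^{-1}\Delta_3^{-1}=x_1^{-1}$ is necessary but nowhere near sufficient.) The paper avoids this by fixing the explicit lift $\gamma(t)=X^3-3e^{2\pi it}X$ and proving the clean swap for that lift directly, via the explicit homotopy $H(t,s)=X^3-3e^{2\pi it}\mu(t,s)X+a(s)\nu(t)$ of Lemma \ref{lema31}, checked to stay inside $R\mathcal{C}_3$ by a discriminant computation. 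Second, the values $j_*(\alpha)=x_2^{-1}$, $j_*(\beta)=x_1^{-1}$ are asserted, not proved; the exponent $-1$ versus $+1$ is an orientation statement about which half-plane the two colliding roots pass through, and the paper needs a full page of root-tracking (the separating line ${\rm Re}(z)=-1$, the Rolle sequence on $[0,\frac13]$, and the sign of ${\rm Im}(X_2(t))$ extracted from $V=3U\eta/(-8U^3+6U+\theta)$) to settle it. Since the $\tilde{\textit{\j}}_*$ row is computed from $p_*\tilde{\textit{\j}}_*=j_*Rp_*$, both gaps propagate into the rest of part b). To close them you would have to carry out essentially the same explicit homotopy and root-tracking arguments the paper gives; the $\Out(F(2))$ shortcut does not substitute for them.
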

\begin{theorem}\label{thm41}
a) In the diagram 
\begin{center}
\begin{picture}(360,85)               
\multiput(110,35)(93,-25){2}{$\longrightarrow$}        \put(82,35){$R\mathcal{B}_4$}   
\multiput(110,65)(93,-25){2}{$\longrightarrow$}        \put(82,67){$R\mathcal{P}_4$}
\multiput(95,53)(40,0){2}{$\downarrow$}                \put(133,35){$\mathcal{B}_4$}
\multiput(188,28)(38,0){2}{$\downarrow$}               \put(133,67){$\mathcal{P}_4$}
\multiput(115,43)(95,-25){2}{$j_*$}                    \put(98,30){\vector(4,-1){70}}    
\multiput(115,75)(95,-25){2}{$\tilde{\textit{\j}_*}$}  \put(75,53){$Rp_*$}
\put(225,42){$\mathcal{P}_3$}                          \put(167,28){$Qp_*$}
\multiput(143,55)(90,-25){2}{$p$}                      \put(225,10){$\mathcal{B}_3$}
\put(173,42){$Q\mathcal{P}_3$}                         \put(173,10){$Q\mathcal{B}_3$}
\put(15,53){$\pi_1(\mathbb{C}\setminus 3)$}            \put(45,47){\vector(4,-1){35}}
\put(125,10){$D_*$}
\end{picture}
\end{center}
we have the following presentations 
$$ \begin{array}{l}
Q\mathcal{P}_3=F\langle\alpha_1,\alpha_0,\alpha_{-\frac{1}{2}},\alpha_{-1},
                  \alpha_{-2}\rangle\times F\langle\beta\rangle,                      \\
Q\mathcal{B}_3=\langle \gamma_1,\gamma_2 \mid 
            \gamma_1\gamma_2\gamma_1\gamma_2\gamma_1\gamma_2=
            \gamma_2\gamma_1\gamma_2\gamma_1\gamma_2\gamma_1\rangle,                  \\
R\mathcal{B}_3=F( \delta_1,\delta_2,\delta_3)\rtimes F(\Gamma_1,\Gamma_2).  
\end{array}    $$
The group $Q\mathcal{B}_3$ is the Artin group of spherical type with graph $I_2(6)$:
\begin{picture}(40,20)
\put(0,2){$\bullet$}        \put(30,2){$\bullet$}
\put(5,5){\line(1,0){25}}   \put(15,10){$6$}
\end{picture}.

b) The homomorphisms in the diagram are given by 
\begin{center}
\begin{tabular}{|c|c|c|c|c|c|c|}
\hline
                        & $\alpha_1$                & $\alpha_0$      
                        & $\alpha_{-\frac{1}{2}}$   & $\alpha_{-1}$  
                        & $\alpha_{-2}$             & $\beta$              \\
\hline
$\tilde{\textit{\j}}_*$ & $A_{12}$                  & $A_{12}$     
                        & $A_{12}A_{13}$            & $A_{12}A_{13}$
                        & $A_{12}A_{13}A_{23}$      & $A_{12}A_{13}A_{23}$ \\
\hline                             
$Qp_*$       & $\gamma_1^2$          
             & $(\gamma_1\gamma_2)^2\gamma_1^{-1}\gamma_2^{-1}$      
             & $(\gamma_1\gamma_2)^2\gamma_1\gamma_2^{-1}$                
             & $\gamma_1\gamma_2^2\gamma_1$
             & $\gamma_1\gamma_2\gamma_1^2\gamma_2\gamma_1=\Delta_3$                
             & $(\gamma_1\gamma_2)^3=\Delta_3$                             \\
\hline
\end{tabular}
\end{center} 
\begin{center}
\begin{tabular}{|c|c|c|}
\hline
             & $\gamma_1$  &   $\gamma_2$                \\
\hline
$j_*$        & $x_1$       &   $x_2$                      \\
\hline
\end{tabular} $\quad$ and $\quad$
\begin{tabular}{|c|c|c|c|c|c|}
\hline
            & $\delta_1$ & $\delta_2$ & $\delta_3$ & $\Gamma_1$ & $\Gamma_2$ \\
\hline
$j_*$       & $x_1$      & $x_2$      & $x_3$      & $x_2$      & $x_3$      \\
\hline           
$D_*$       & $1$        & $1$        & $1$        & $\gamma_1$ & $\gamma_2$ \\
\hline 
\end{tabular}
\end{center} 
\end{theorem}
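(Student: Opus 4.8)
The plan is to reduce the whole diagram to an analysis of the two base spaces $Q\mathcal{F}_3$ and $Q\mathcal{C}_3$, and then to transport the answer upward through the fibration of Theorem~\ref{thm1}. Since all the spaces are aspherical for $n\le 4$, the fibration $D\colon R\mathcal{C}_4\to Q\mathcal{C}_3$ with fibre $\mathbb{C}\setminus 3$ and section $I$ gives a split short exact sequence $1\to\pi_1(\mathbb{C}\setminus 3)\to R\mathcal{B}_4\to Q\mathcal{B}_3\to 1$, and similarly for the ordered row; thus $R\mathcal{B}_4$ is determined once the base groups are known. The key simplification is that both defining conditions of $Q\mathcal{C}_{n-1}$ --- the distinctness $\beta_k\neq\beta_j$ and the integral condition $\int_{\beta_k}^{\beta_j}Q_3\,dt\neq 0$ --- are invariant under the affine action $z\mapsto az+b$ on the roots (translation leaves the integral unchanged and scaling multiplies it by $a^4$). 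Hence they descend to the one-dimensional shape line, and each base space splits off a contractible translation factor and a $\mathbb{C}^{*}$ scaling factor.

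To obtain $Q\mathcal{P}_3$ I normalise $\beta_1=0,\ \beta_2=1$ and set $u=-\beta_3$. Evaluating $\int_{\beta_k}^{\beta_j}Q_3\,dt$ for the three pairs $(k,j)$ shows it vanishes exactly when the remaining root is the midpoint of the two endpoints, i.e.\ at the shape values $u\in\{1,-\tfrac{1}{2},-2\}$, while distinctness removes $u\in\{0,-1\}$. Therefore $Q\mathcal{F}_3\cong(\mathbb{C}\rtimes\mathbb{C}^{*})\times\bigl(\mathbb{C}\setminus\{1,0,-\tfrac{1}{2},-1,-2\}\bigr)$, so $Q\mathcal{P}_3=F(5)\times\mathbb{Z}$, with the five $\alpha$'s the loops around the five punctures (indexed by their shape value) and $\beta$ the central scaling loop. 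This is the first presentation in (a).

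The heart of the proof is $Q\mathcal{B}_3\cong I_2(6)$. Centring the cubic (a contractible factor), $Q\mathcal{C}_3$ is homotopy equivalent to $W=\{(p,q)\mid 4p^3+27q^2\neq 0,\ q\neq 0\}$, because for a depressed cubic ``one root is the midpoint of the other two'' is precisely $q=0$. Now $W$ carries the weighted $\mathbb{C}^{*}$-action $t\cdot(p,q)=(t^2p,t^3q)$; its quotient is $\mathbb{P}^1$ with the discriminant point and the point $q=0$ deleted and a single residual $\mathbb{Z}/3$ cone point at $p=0$, inherited from the torus-knot (trefoil) Seifert structure of $\mathcal{C}_3$. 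The associated central extension then has the form $\langle a,b,z\mid z\ \text{central},\ b^3=z\rangle$, and eliminating $z$ yields $\langle a,b\mid b^3\ \text{central}\rangle$; setting $b=\gamma_1\gamma_2,\ a=\gamma_1$ turns ``$b^3$ central'' into $(\gamma_1\gamma_2)^3=(\gamma_2\gamma_1)^3$, which is the Artin group $I_2(6)$. Equivalently one fibres $W$ over the $q$-line $\mathbb{C}^{*}$ with fibre $\mathbb{C}\setminus 3$ and identifies the monodromy with the cyclic rotation braid of three points. I expect this step --- the Seifert/monodromy analysis and the recognition of the $I_2(6)$ presentation --- to be the main obstacle.

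Finally I assemble $R\mathcal{B}_4$ and read off the homomorphisms. The section of $D$ gives $R\mathcal{B}_4=F(\delta_1,\delta_2,\delta_3)\rtimes Q\mathcal{B}_3$, the normal free factor being the fibre $\pi_1(\mathbb{C}\setminus 3)$ and the complement $\langle\Gamma_1,\Gamma_2\rangle\cong Q\mathcal{B}_3$, with conjugation given by the braid monodromy of $D$; accordingly $D_*$ is the projection $\delta_i\mapsto 1$, $\Gamma_i\mapsto\gamma_i$. The maps $\tilde{\textit{\j}}_*$ and $j_*$ arise from the inclusions $Q\mathcal{F}_3\hookrightarrow\mathcal{F}_3$ and $R\mathcal{C}_4\hookrightarrow\mathcal{C}_4$: each generating loop is sent to the (pure) braid realised by its monodromy, the distinctness loops going to the collision generators $A_{ij}$ and the harmonic and scaling loops to the products tabulated. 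The covering map $Qp_*$ is computed from the $\Sigma_3$-cover $Q\mathcal{F}_3\to Q\mathcal{C}_3$ by rewriting each $\alpha$ and $\beta$ as a word in $\gamma_1,\gamma_2$, the central $\beta$ landing on $(\gamma_1\gamma_2)^3=\Delta_3$. It then remains to verify that these assignments are compatible --- that the tabulated words respect the $I_2(6)$ relation and reproduce the semidirect action --- a finite check completing (b).
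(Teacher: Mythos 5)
Your reduction of part (a) to the two base spaces is sound, and your route to $Q\mathcal{B}_3\cong I_2(6)$ is genuinely different from the paper's. The paper never leaves the braid-theoretic setting: it presents $Q\mathcal{B}_3$ via the extension $1\to Q\mathcal{P}_3\to Q\mathcal{B}_3\to\Sigma_3\to 1$ with eight generators and then computes every conjugation relation $\gamma_k^{\pm1}\alpha_h\gamma_k^{\mp1}$ by lifting loops explicitly and drawing root trajectories, finally collapsing the presentation to two generators. Your observation that, after centring, the midpoint condition on the roots of a depressed cubic $X^3+pX+q$ is exactly $q=0$, so that $Q\mathcal{C}_3\simeq\{\,4p^3+27q^2\neq0,\ q\neq0\,\}$, is correct (it is the $n=4$ case of Proposition \ref{prop1}), and fibring this set over the $q$-line with fibre $\mathbb{C}\setminus 3$ and cyclic monodromy does give $F(3)\rtimes\mathbb{Z}\cong\langle a,u\mid[a,u^3]=1\rangle\cong I_2(6)$; this is shorter and more conceptual than the paper's argument, and your normalisation argument for $Q\mathcal{P}_3=F(5)\times F(1)$ is essentially the paper's Lemma \ref{lema40} in affine-invariant clothing. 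I would, however, lean on the $q$-fibration rather than the Seifert presentation: the assertion $b^3=z$ rather than $b^3=z^2$ is precisely the Seifert invariant you have not computed, whereas the fibration version needs only that a $240^{\circ}$ rotation about the basepoint $p=0$ permutes symmetric radial generators cyclically.

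The genuine gap is in part (b), and it leaks back into the statement of (a). The theorem does not assert an abstract isomorphism type: it asserts a presentation on the \emph{specific} braids $\gamma_1,\gamma_2$ (half-twists of $\{0,1\}$ and $\{1,3\}$ inside the base configuration $\{0,1,3\}$), together with explicit words expressing $Qp_*(\alpha_h)$ and $Qp_*(\beta)$ in them and the values of $j_*$, $\tilde{\textit{\j}}_*$, $D_*$ on all generators. Knowing $Q\mathcal{B}_3\cong I_2(6)$ does not tell you that these particular elements are Artin generators, nor that $\alpha_{-1}=\gamma_1\gamma_2^2\gamma_1$, $\alpha_0=(\gamma_1\gamma_2)^2\gamma_1^{-1}\gamma_2^{-1}$, $\beta=(\gamma_1\gamma_2)^3$, and so on; a two-generated group can contain many pairs satisfying the $I_2(6)$ relation without that relation presenting it on those elements. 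Likewise $j_*(\delta_k)=x_k$, $j_*(\Gamma_k)=x_{k+1}$ and the values $\tilde{\textit{\j}}_*(\alpha_h)\in\{A_{12},A_{12}A_{13},A_{12}A_{13}A_{23}\}$ require following the root trajectories of the explicit polynomial lifts and separating them by vertical lines; this is exactly what Lemmas \ref{lema41}--\ref{lema44} and the accompanying pictures do, and it is the bulk of the paper's proof of this theorem. Your closing sentence defers all of this to ``a finite check,'' but no mechanism for performing it is supplied, so as written the tables in (b) --- and the identification of $\gamma_1,\gamma_2$ with Artin generators needed to finish (a) --- are asserted rather than proved.
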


\begin{corollary}\label{cor31}
a) The space of non-equilateral triangles $\{z_1,z_2,z_3\}$ (in the complex plane) is a 
$K(F(2)\rtimes F(1),1)$ space.

\noindent b) The complement of the hypersurface
$$ S: (z_1-z_2)(z_1-z_3)(z_2-z_3)(z_1^2+z_2^2+z_3^2-z_1z_2-z_1z_3-z_2z_3)=0   $$
in $\mathbb{C}^3$ is a $K(F(4)\times F(1),1)$ space.
\end{corollary}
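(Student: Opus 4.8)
The plan is to recognize the two spaces in the corollary as the $n=3$ restricted configuration spaces $R\mathcal{C}_3$ and $R\mathcal{F}_3$, and then to read off their fundamental groups from Theorem \ref{thm31} together with the asphericity recorded in Section \ref{sec:intro}. First I would set up the dictionary. A triangle $\{z_1,z_2,z_3\}$ of distinct points corresponds to the monic cubic $P(X)=(X-z_1)(X-z_2)(X-z_3)$, whose derivative is $P'(X)=3X^2-2(z_1+z_2+z_3)X+(z_1z_2+z_1z_3+z_2z_3)$. A one-line computation with the Newton identity $\sigma_1^2=\sum z_i^2+2\sigma_2$ shows that the discriminant of $P'$ equals $4(z_1^2+z_2^2+z_3^2-z_1z_2-z_1z_3-z_2z_3)$, so $P'$ has a double root---equivalently the two critical points (the foci of the Steiner inellipse, by Marden's theorem) collide, i.e. the triangle is equilateral---precisely when the quadric factor of $S$ vanishes.

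Next I would match this with the defining conditions of $R\mathcal{C}_3$. Membership requires $z_i\neq z_j$ for $i\neq j$ (distinct roots, equivalently $\gamma\neq\int_0^{\beta_k}Q_2$) together with $Q_2=P'\in Q\mathcal{C}_2$. Here I would check that for $n=3$ the integral condition is vacuous, since $\int_{\beta_1}^{\beta_2}3(t-\beta_1)(t-\beta_2)\,dt=-\tfrac12(\beta_2-\beta_1)^3$ vanishes only when $\beta_1=\beta_2$; hence $Q\mathcal{C}_2=\mathcal{C}_2$, and the sole surviving constraint coming from $Q_2$ is $\beta_1\neq\beta_2$, i.e. non-equilaterality. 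Conversely every non-equilateral triangle of distinct points arises, with $\gamma=-P(0)$ recovered from the constant term. Thus $R\mathcal{C}_3$ is exactly the space of non-equilateral triangles. Its ordered regular cover $R\mathcal{F}_3$ is then $\mathcal{F}_3$ with the equilateral locus removed, that is $\mathbb{C}^3\setminus S$: the diagonal factor $(z_1-z_2)(z_1-z_3)(z_2-z_3)$ cuts out the coincidence locus $\mathbb{C}^3\setminus\mathcal{F}_3$, and the quadric cuts out the equilateral locus.

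I would then establish asphericity. By Theorem \ref{thm1} with $n=3$, the map $D:R\mathcal{C}_3\to Q\mathcal{C}_2$ is a locally trivial fibration with fiber $\mathbb{C}\setminus 2$; both the fiber $K(F(2),1)$ and the base $Q\mathcal{C}_2=\mathcal{C}_2=K(\mathbb{Z},1)$ are aspherical, so the long exact homotopy sequence forces $R\mathcal{C}_3$ to be a $K(\pi,1)$, and the same holds for the covering space $R\mathcal{F}_3$ (this is the assertion made for $n=3$ in Section \ref{sec:intro}). Finally, for the fundamental groups I would quote Theorem \ref{thm31}(a): $\pi_1(R\mathcal{C}_3)=R\mathcal{B}_3=F\langle\alpha,\beta\rangle\rtimes F\langle\gamma\rangle\cong F(2)\rtimes F(1)$, which gives (a); and $\pi_1(\mathbb{C}^3\setminus S)=\pi_1(R\mathcal{F}_3)=R\mathcal{P}_3=F(4)\times F(1)$, which gives (b).

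The group-theoretic heavy lifting is already done in Theorem \ref{thm31}, so the only genuine work is the clean dictionary in the first two steps: pinning down that the discriminant of $P'$ is exactly the stated quadric and that the integral condition is vacuous for $n=3$. I expect the one mild subtlety to be bookkeeping---confirming that the coincidence and equilateral loci together account for all of $S$ and that no extra component is hidden in the defining inequalities of $R\mathcal{C}_3$---but this is routine once the identification $R\mathcal{F}_3=\mathbb{C}^3\setminus S$ is in hand.
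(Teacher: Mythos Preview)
Your proposal is correct and follows essentially the same route as the paper: identify the two spaces with $R\mathcal{C}_3$ and $R\mathcal{F}_3$, use the fibration over $Q\mathcal{C}_2=\mathcal{C}_2$ for asphericity, and read off the fundamental groups from Theorem~\ref{thm31}. The paper's own argument is terser---it cites van der Berg's (Marden's) theorem for the geometric characterization of equilaterality rather than computing the discriminant of $P'$ directly as you do---and it adds the observation that the quadric factors as $(\alpha_1+\omega\alpha_2+\omega^2\alpha_3)(\alpha_1+\omega^2\alpha_2+\omega\alpha_3)$, so that $R\mathcal{F}_3$ is in fact the complement of a central \emph{hyperplane} arrangement, yielding an alternative direct proof via arrangement theory; but the core identification and the appeal to Theorem~\ref{thm31} are the same.
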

\begin{corollary}\label{cor40}
a) The group $Q\mathcal{B}_3$ is torsion free and its center is the cyclic group generated 
by ${\bf \Delta}=\gamma_1\gamma_2\gamma_1\gamma_2\gamma_1\gamma_2$.

\end{corollary}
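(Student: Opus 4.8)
The plan is to exploit the identification, recorded in Theorem~\ref{thm41}, of $Q\mathcal{B}_3$ with the Artin group of dihedral type $I_2(6)$,
\[ Q\mathcal{B}_3=\langle\gamma_1,\gamma_2\mid \gamma_1\gamma_2\gamma_1\gamma_2\gamma_1\gamma_2=\gamma_2\gamma_1\gamma_2\gamma_1\gamma_2\gamma_1\rangle, \]
and to set ${\bf\Delta}=(\gamma_1\gamma_2)^3=\gamma_1\gamma_2\gamma_1\gamma_2\gamma_1\gamma_2$. First I would check that ${\bf\Delta}$ is central. Reading the defining relation as $(\gamma_1\gamma_2)^3=(\gamma_2\gamma_1)^3$ gives, by a cyclic shift, ${\bf\Delta}\gamma_1=(\gamma_1\gamma_2)^3\gamma_1=\gamma_1(\gamma_2\gamma_1)^3=\gamma_1{\bf\Delta}$ and, symmetrically, ${\bf\Delta}\gamma_2=(\gamma_2\gamma_1)^3\gamma_2=\gamma_2(\gamma_1\gamma_2)^3=\gamma_2{\bf\Delta}$, so ${\bf\Delta}$ commutes with both generators. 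The total–degree homomorphism $\ell\colon Q\mathcal{B}_3\to\Z$, $\gamma_1,\gamma_2\mapsto 1$, sends ${\bf\Delta}$ to $6$, whence $\langle{\bf\Delta}\rangle\cong\Z$ is an infinite cyclic central subgroup.

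For the center I would pass to the quotient $\bar G=Q\mathcal{B}_3/\langle{\bf\Delta}\rangle$. Since ${\bf\Delta}$ is central, $\langle{\bf\Delta}\rangle$ is its own normal closure, so adjoining ${\bf\Delta}=1$ kills exactly this subgroup; the relation $(\gamma_2\gamma_1)^3=1$ is then a conjugate of $(\gamma_1\gamma_2)^3=1$ and becomes redundant, leaving $\bar G=\langle\gamma_1,\gamma_2\mid(\gamma_1\gamma_2)^3=1\rangle$. A single Tietze move, $v=\gamma_1\gamma_2$ (so $\gamma_2=\gamma_1^{-1}v$), identifies $\bar G\cong\Z\ast\Z/3\Z$, with free factors $\langle\gamma_1\rangle\cong\Z$ and $\langle v\rangle\cong\Z/3\Z$. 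A free product of two nontrivial groups has trivial center, so $Z(\bar G)=1$; as the image of any central element of $Q\mathcal{B}_3$ is central in $\bar G$, it follows that $Z(Q\mathcal{B}_3)\subseteq\langle{\bf\Delta}\rangle$. Combined with the first step this gives $Z(Q\mathcal{B}_3)=\langle{\bf\Delta}\rangle\cong\Z$.

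Torsion–freeness I would deduce directly from the geometry already available: $Q\mathcal{C}_3$ is a connected open subset of the affine space $\C^3$ of monic cubics, hence a finite–dimensional $K(Q\mathcal{B}_3,1)$, so $Q\mathcal{B}_3$ has finite cohomological dimension and can contain no nontrivial finite cyclic subgroup. Purely algebraically the same conclusion follows from the central extension $1\to\Z\to Q\mathcal{B}_3\xrightarrow{\;q\;}\Z\ast\Z/3\Z\to 1$ above: if $g$ has finite order then $q(g)$ has finite order, hence by the structure theory of free products is conjugate into the factor $\Z/3\Z$; writing $q(h^{-1}gh)$ in that factor places $h^{-1}gh$ in $q^{-1}(\Z/3\Z)=\langle\gamma_1\gamma_2\rangle\cong\Z$ (using $(\gamma_1\gamma_2)^3={\bf\Delta}$), which is torsion–free, forcing $g=1$.

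The only genuinely delicate point is the upper bound on the center — namely confirming that the quotient by $\langle{\bf\Delta}\rangle$ is precisely the free product $\Z\ast\Z/3\Z$ and invoking the triviality of its center; everything else is a direct manipulation of the dihedral relation. The remaining care is bookkeeping: that $\langle{\bf\Delta}\rangle$ equals $\ker q$ (immediate from centrality of ${\bf\Delta}$) and that the preimage of the torsion factor is the infinite cyclic group $\langle\gamma_1\gamma_2\rangle$, which is what rules out torsion in the algebraic argument.
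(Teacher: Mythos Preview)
Your argument is correct and self-contained, but it follows a different route from the paper. The paper dispatches torsion-freeness in one line by observing that $Q\mathcal{B}_3$ is a Garside group and citing the general fact (e.g.\ \cite{Pa}) that Garside groups are torsion-free. For the center, the paper uses the exact sequence $1\to Q\mathcal{P}_3\to Q\mathcal{B}_3\to\Sigma_3\to 1$: a central element maps to the (trivial) center of $\Sigma_3$, hence lies in $Q\mathcal{P}_3\cong F(5)\times F(1)$, whose center is the $F(1)$ factor generated by $\beta=(\gamma_1\gamma_2)^3={\bf\Delta}$. Your approach instead works directly with the dihedral Artin presentation: you identify the quotient $Q\mathcal{B}_3/\langle{\bf\Delta}\rangle$ with $\Z\ast\Z/3\Z$ and read off both statements from elementary properties of free products. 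Your method is more elementary and avoids both the Garside machinery and the earlier computation of $Q\mathcal{P}_3$; the paper's method, on the other hand, leverages structure already set up and is shorter.

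One caution: the sentence ``$Q\mathcal{C}_3$ is a connected open subset of $\C^3$, hence a finite-dimensional $K(Q\mathcal{B}_3,1)$'' is not a valid inference as stated---open subsets of $\C^n$ need not be aspherical (e.g.\ $\C^2\setminus\{0\}$). The asphericity of $Q\mathcal{C}_3$ does hold, but it requires the explicit product decomposition of Lemma~\ref{lema40}; in the paper this is only recorded after the present corollary. Your algebraic argument via the central extension $1\to\Z\to Q\mathcal{B}_3\to\Z\ast\Z/3\Z\to 1$ is, however, complete on its own and does not depend on this.
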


\begin{corollary}\label{cor41}
The space of triples of complex numbers $\{z_1,z_2,z_3\}$ which are not an arithmetic 
progression is a $K(\pi,1)$ space.
\end{corollary}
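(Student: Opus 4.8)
The plan is to recognize this space as the restricted base configuration space $Q\mathcal{C}_3$ attached to $n=4$, and then to prove asphericity by passing to an ordered finite cover that is the complement of a central arrangement of lines. So first I would compute the integral that defines $Q\mathcal{C}_3$ and show its vanishing is exactly the arithmetic-progression condition. For a pair $\beta_i,\beta_j$ set $m=\tfrac{\beta_i+\beta_j}{2}$ and $h=\tfrac{\beta_j-\beta_i}{2}$ and substitute $t=m+s$ in $Q_3(t)=4\prod_l(t-\beta_l)$; then $t-\beta_i=s+h$, $t-\beta_j=s-h$, $t-\beta_k=s+c$ with $c=m-\beta_k$, so $Q_3=4(s^2-h^2)(s+c)$. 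The odd part integrates to zero over $[-h,h]$, leaving
$$\int_{\beta_i}^{\beta_j}Q_3(t)\,dt=-\tfrac{16}{3}\,c\,h^3=-\tfrac{16}{3}\Big(\tfrac{\beta_i+\beta_j}{2}-\beta_k\Big)\Big(\tfrac{\beta_j-\beta_i}{2}\Big)^{3}.$$
Since $\beta_i\neq\beta_j$, this vanishes iff $\beta_k=\tfrac{\beta_i+\beta_j}{2}$, i.e. iff $\beta_k$ is the midpoint of $\beta_i,\beta_j$. Running over the three pairs shows $Q_3\in Q\mathcal{C}_3$ precisely when $\beta_1,\beta_2,\beta_3$ are distinct and none is the midpoint of the other two, that is, when $\{\beta_1,\beta_2,\beta_3\}$ is a set of three distinct complex numbers not in arithmetic progression. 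Hence the space in the statement is exactly $Q\mathcal{C}_3$.

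Next I would reduce the $K(\pi,1)$ question to a central line arrangement. The ordered cover $Q\mathcal{F}_3\to Q\mathcal{C}_3$ is a finite regular covering with deck group $\Sigma_3$, so the two spaces share a universal cover and one is aspherical iff the other is. Now $Q\mathcal{F}_3\subset\mathbb{C}^3$ is the complement of the six hyperplanes $\beta_i=\beta_j$ and $\beta_i+\beta_j=2\beta_k$, all of which contain the diagonal line $\Delta=\mathbb{C}\cdot(1,1,1)$. Splitting $\mathbb{C}^3=\Delta\oplus W$ with $W=\{\beta_1+\beta_2+\beta_3=0\}$ exhibits $Q\mathcal{F}_3\cong\mathbb{C}\times M$, where $M\subset W\cong\mathbb{C}^2$ is the complement of the six traces of these hyperplanes. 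In the coordinates of $W$ those traces become $\beta_i=\beta_j$ together with $\beta_k=0$, six lines through the origin whose normals $(1,-1,0),(1,0,-1),(0,1,-1),(1,0,0),(0,1,0),(0,0,1)$ are pairwise non-proportional, so the six lines are distinct.

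Finally I would invoke the standard asphericity of central line arrangements in $\mathbb{C}^2$. The tautological map $\mathbb{C}^2\setminus\{0\}\to\CP^1$ restricts to a locally trivial fiber bundle $M\to\CP^1\setminus\{6\text{ points}\}$ with fiber $\mathbb{C}^*$ (each punctured line through the origin misses the arrangement away from $0$). The base is a $K(F(5),1)$ and the fiber is a $K(\mathbb{Z},1)$, so the homotopy exact sequence of the bundle forces $\pi_q(M)=0$ for $q\ge2$. Thus $M$, hence $Q\mathcal{F}_3\cong\mathbb{C}\times M$, and hence its quotient $Q\mathcal{C}_3$, is aspherical. (Alternatively, by Theorem \ref{thm41} the group $Q\mathcal{B}_3$ is the spherical Artin group of type $I_2(6)$, and Deligne's theorem on complements of reflection arrangements yields the same conclusion.) The only genuine computation is the elementary integral above; once the space is identified as a rank-two central arrangement complement, the $K(\pi,1)$ property is automatic. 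I expect the single point requiring care to be checking that the six lines are pairwise distinct, since that is what guarantees the base of the $\mathbb{C}^*$-bundle is a six-punctured sphere rather than something with fewer punctures.
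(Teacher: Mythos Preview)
Your argument is correct and follows essentially the same route as the paper. The paper identifies the space with $Q\mathcal{C}_3$ via the hyperplanes $S_{ij}:\beta_i+\beta_j=2\beta_k$ listed at the start of Section~\ref{sec4} (which is exactly your integral computation, done in Proposition~\ref{prop1} for general $n$), then passes to the ordered cover $Q\mathcal{F}_3$ and proves asphericity through Lemma~\ref{lema40} and Corollary~\ref{cor50}. The only cosmetic difference is in the last step: where you use the tautological $\mathbb{C}^*$-bundle $M\to\CP^1\setminus\{6\text{ points}\}$, the paper writes down an explicit global trivialization $\Psi(X,Y,Z)=(X/Y,Y,Z)$ giving $Q\mathcal{F}_3\cong(\mathbb{C}\setminus 5\text{ points})\times\mathbb{C}^*\times\mathbb{C}$, which is your bundle trivialized by singling out one of the six lines.
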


Section 5 contains a few comments about spaces $Q\mathcal{F}_5$ and $Q\mathcal{F}_6$. We show
that, in general, the new groups $Q\mathcal{P}_{n-1}$, $R\mathcal{P}_n$, $Q\mathcal{B}_{n-1}$ 
and $R\mathcal{B}_n$ are 'more complicated' than the classical 
$\mathcal{P}_*$ and $\mathcal{B}_*$:
\begin{prop}\label{propS}
All the homomorphisms induced by inclusions
$$ \begin{array}{ll}
      \tilde{\textit{\j}}_*:Q\mathcal{P}_{n-1}\twoheadrightarrow\mathcal{P}_{n-1}, &
       j_*:Q\mathcal{B}_{n-1}\twoheadrightarrow\mathcal{B}_{n-1},                            \\
      \tilde{\textit{\j}}_*:R\mathcal{P}_n\twoheadrightarrow\mathcal{P}_n, &
       j_*:R\mathcal{B}_n\twoheadrightarrow\mathcal{B}_n
\end{array}  $$
are surjective.
\end{prop}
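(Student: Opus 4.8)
The plan is to deduce all four surjectivity statements from a single general-position principle, since each of the four homomorphisms is induced by the inclusion of the complement of a complex hypersurface. First I would record the topological lemma: \emph{if $M$ is a connected complex manifold and $H\subset M$ is a closed complex hypersurface (or a finite union of such), then for any basepoint $x\in M\setminus H$ the inclusion induces a surjection $\pi_1(M\setminus H,x)\twoheadrightarrow\pi_1(M,x)$.} The proof is by transversality: stratify $H$ by smooth complex submanifolds, each of real codimension $\geq 2$ (the smooth locus has real codimension $2$, its singular locus real codimension $\geq 4$, and so on). Given $[\gamma]\in\pi_1(M,x)$, approximate $\gamma$ by a smooth loop and perturb it rel basepoint to be transverse to every stratum; since the expected dimension of the intersection of a $1$-manifold with a stratum of codimension $\geq 2$ is negative, the perturbed loop $\gamma'$ is disjoint from $H$. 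Then $\gamma'$ lies in $M\setminus H$ and is homotopic to $\gamma$ in $M$, so $[\gamma]$ is in the image. This is exactly where the holomorphic structure is essential: removing a real hypersurface could change $\pi_1$, but removing a complex one cannot.

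Next I would check that each of the four pairs of spaces fits this lemma. For the base, $\mathcal{C}_{n-1}\setminus Q\mathcal{C}_{n-1}$ is the locus where two critical values of a primitive of $Q_{n-1}$ coincide, i.e. $\int_{\beta_k}^{\beta_j}Q_{n-1}(t)\,dt=0$ for some $k\neq j$. Writing $c_k=\int_0^{\beta_k}Q_{n-1}(t)\,dt$, this is the vanishing of the polynomial $\prod_{k<j}(c_k-c_j)^2$, which is symmetric in $\beta_1,\ldots,\beta_{n-1}$ and hence a polynomial in the coefficients of $Q_{n-1}$; since $Q\mathcal{C}_{n-1}$ is dense this polynomial is not identically zero, so its zero set is a complex hypersurface. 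For the total space, $R\mathcal{C}_n=D^{-1}(Q\mathcal{C}_{n-1})\cap\mathcal{C}_n$, and $\mathcal{C}_n\setminus R\mathcal{C}_n$ is the union of $\{{\bf D}_{P'}=0\}$ (the derivative acquires a repeated root) with the pullback under $D$ of the previous hypersurface. Both are cut out by a single nonvanishing polynomial in the coefficients of $P$, hence again form a complex hypersurface. Applying the lemma to $R\mathcal{C}_n\subset\mathcal{C}_n$ and to $Q\mathcal{C}_{n-1}\subset\mathcal{C}_{n-1}$ gives the surjectivity of $j_*:R\mathcal{B}_n\twoheadrightarrow\mathcal{B}_n$ and $j_*:Q\mathcal{B}_{n-1}\twoheadrightarrow\mathcal{B}_{n-1}$.

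For the ordered (pure braid) versions I would pass through the finite regular coverings $p:\mathcal{F}_n\to\mathcal{C}_n$ and $p:\mathcal{F}_{n-1}\to\mathcal{C}_{n-1}$, which are local biholomorphisms. By construction $R\mathcal{F}_n=p^{-1}(R\mathcal{C}_n)$ and $Q\mathcal{F}_{n-1}=p^{-1}(Q\mathcal{C}_{n-1})$, so their complements are the preimages of the hypersurfaces above; and the preimage of a complex hypersurface under a local biholomorphism is again a complex hypersurface. The lemma then yields the surjectivity of $\tilde{\textit{\j}}_*:R\mathcal{P}_n\twoheadrightarrow\mathcal{P}_n$ and $\tilde{\textit{\j}}_*:Q\mathcal{P}_{n-1}\twoheadrightarrow\mathcal{P}_{n-1}$.

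The argument is uniform in $n$, so there is no genuine obstacle coming from the size of $n$; the transversality lemma is standard, and the only point requiring care is the bookkeeping of the previous two paragraphs—verifying that the removed loci are honest complex hypersurfaces, i.e. proper algebraic subsets of pure complex codimension $1$, rather than sets of higher codimension or the whole space. Density of the restricted spaces, already established in the excerpt, is precisely what guarantees the defining polynomials are not identically zero, while the smooth/singular stratification is what lets the transversality argument ignore the singularities of the discriminant loci.
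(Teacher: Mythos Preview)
Your proposal is correct and rests on the same transversality principle as the paper's proof: a complex hypersurface has real codimension $\geq 2$, so loops can be pushed off it and the inclusion of the complement induces a $\pi_1$-surjection. The only organizational differences are that the paper first treats the ordered spaces $Q\mathcal{F}_{n-1}$ (where the explicit polynomials $S_{ij}$ from Proposition~\ref{prop1} exhibit the removed locus as an algebraic hypersurface) and then passes to the unordered case via the short exact sequence $1\to Q\mathcal{P}_{n-1}\to Q\mathcal{B}_{n-1}\to\Sigma_{n-1}\to 1$, whereas you work directly in coefficient space $\mathcal{C}_{n-1}$, $\mathcal{C}_n$ and then pull back through the $\Sigma_n$-covering to get the pure versions; and for the $R$-spaces the paper, working in root coordinates $\alpha_1,\ldots,\alpha_n$, invokes only \emph{local} holomorphicity of the $\beta_j$ and hence of the defining equations, while your argument in coefficient coordinates makes the removed locus globally algebraic. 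Both routes are equally valid and equally short.
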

We analyse the real case, where new \emph{restricted configuration spaces} 
$Q\mathcal{F}_{n-1}(\mathbb{R})$ and $R\mathcal{F}_n(\mathbb{R})$ (they do not coincide with 
$\mathbb{R}^{n-1}\cap Q\mathcal{F}_{n-1}$ or $\mathbb{R}^n\cap R\mathcal{F}_n$) 
give a trivial fibration
$$ D:R\mathcal{F}_n(\mathbb{R})\longrightarrow Q\mathcal{F}_{n-1}(\mathbb{R})  $$
with contractible fibres.
In particular, we find polynomials of degree $n\geq 4$ with $n$ real distinct roots 
having no primitive with $n+1$ real distinct roots.

\section{Restricted configuration spaces}

Some computations are necessary to give a precise definition for the restricted configuration
spaces $Q\mathcal{F}_{n-1}$ and $R\mathcal{F}_n$. The map $D$ denotes various restrictions of 
the derivative
$$ D:\mathbb{C}[X]\longrightarrow\mathbb{C}[X].   $$
\begin{prop}\label{prop1}
Take a point $Q_{n-1}(X)=n(X-\beta_1)(X-\beta_2)\ldots(X-\beta_{n-1})$ in 
$\mathcal{C}_{n-1}$ ($n\geq 4$) and its primitive $P_n(X)=\int_0^XQ_{n-1}(t)dt$. Then
$$ P_n(\beta_i)-P_n(\beta_j)=\dfrac{-(\beta_i-\beta_j)^3}{(n-1)(n-2)}\Large[A_{n-3}^{(n)}
      (\beta_i,\beta_j)+\sum_{k=1}^{n-3}A_{n-3-k}^{(n)}(\beta_i,\beta_j)\sigma_k\Large], $$
where $\sigma_{\ast}$ are the elementary symmetric polynomials in variables
$z_1,..,\widehat{z_i},..,\widehat{z_j},..,z_{n-1}$, 
$$ A_{n-3}^{(n)}(\beta_i,\beta_j)=(n-2)\beta_i^{n-3}+2(n-3)\beta_i^{n-4}\beta_j+
    3(n-4)\beta_i^{n-5}\beta_j^2+\ldots+(n-2)\beta_j^{n-3} $$
and the coefficients of the polynomials $\sigma_k$ satisfy the recurrence relation
$$ A_{n-3-k}^{(n)}(\beta_i,\beta_j)=\frac{-n}{n-3}A_{n-3-k}^{(n-1)}(\beta_i,\beta_j),\,
                                                         A_0^{(3)}(\beta_i,\beta_j)=1. $$ 
     
\end{prop}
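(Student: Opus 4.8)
The plan is to realise the left-hand side as a single integral and to make the cubic factor $(\beta_i-\beta_j)^3$ appear through an affine change of variable, after which the whole statement reduces to reading off coefficients. Since $P_n(X)=\int_0^XQ_{n-1}(t)\,dt$, we have $P_n(\beta_i)-P_n(\beta_j)=\int_{\beta_j}^{\beta_i}Q_{n-1}(t)\,dt$. Writing
\[ Q_{n-1}(t)=n(t-\beta_i)(t-\beta_j)R(t),\qquad R(t)=\prod_{k\neq i,j}(t-\beta_k)=\sum_{k=0}^{n-3}(-1)^k\sigma_k\,t^{\,n-3-k}, \]
where $\sigma_k$ is the $k$-th elementary symmetric polynomial in the remaining roots $\beta_k$ ($k\neq i,j$), I would substitute $t=\beta_j+(\beta_i-\beta_j)s$ with $s\in[0,1]$. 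Then $(t-\beta_i)(t-\beta_j)=(\beta_i-\beta_j)^2s(s-1)$ and $dt=(\beta_i-\beta_j)\,ds$, so that
\[ P_n(\beta_i)-P_n(\beta_j)=n(\beta_i-\beta_j)^3\int_0^1 s(s-1)\,R\bigl(\beta_j+(\beta_i-\beta_j)s\bigr)\,ds. \]
This exhibits the factor $(\beta_i-\beta_j)^3$ at once and reduces everything to the integrals $I_d=\int_0^1 s(s-1)\bigl(\beta_j+(\beta_i-\beta_j)s\bigr)^d\,ds$, which crucially do not depend on $n$.

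Next I would match coefficients. Inserting the expansion of $R$ and collecting the coefficient of each $\sigma_k$ gives $P_n(\beta_i)-P_n(\beta_j)=n(\beta_i-\beta_j)^3\sum_{k=0}^{n-3}(-1)^k\sigma_k\,I_{n-3-k}$. Comparing with the asserted formula, and cancelling the common prefactor $-(\beta_i-\beta_j)^3/[(n-1)(n-2)]$, identifies the coefficient polynomials as $A_{n-3-k}^{(n)}(\beta_i,\beta_j)=-(-1)^k\,n(n-1)(n-2)\,I_{n-3-k}$; since the $\sigma_k$ are symmetric in the remaining roots, each $A$ is a genuine polynomial in $\beta_i,\beta_j$ alone. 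The recurrence now drops out for free: fix $d=n-3-k$ and recall that $I_d$ is independent of $n$, so $A_d^{(n)}/A_d^{(n-1)}$ is the ratio of the two prefactors. At level $n$ this prefactor is $-(-1)^{\,n-3-d}n(n-1)(n-2)$, and at level $n-1$ it is $-(-1)^{\,n-4-d}(n-1)(n-2)(n-3)$; their quotient is $\tfrac{n}{n-3}\cdot(-1)=\tfrac{-n}{n-3}$, where the $-1$ comes from the one-step change in the exponent of $(-1)$. This is exactly $A_{n-3-k}^{(n)}=\tfrac{-n}{n-3}A_{n-3-k}^{(n-1)}$ (for $k\geq1$, where $A_{n-3-k}^{(n-1)}$ is defined), and the base case $A_0^{(3)}=1$ is the direct computation for $Q_2$, where $R\equiv1$ and $\int_0^1 s(s-1)\,ds=-\tfrac16$, so that $-3\cdot2\cdot1\cdot(-\tfrac16)=1$.

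It remains to verify the explicit closed form of the leading coefficient $A_{n-3}^{(n)}$, i.e. the case $k=0$. Here I would expand $\bigl(\beta_j+(\beta_i-\beta_j)s\bigr)^{n-3}$ by the binomial theorem and use the elementary evaluation $\int_0^1 s(s-1)s^\ell\,ds=-\tfrac{1}{(\ell+2)(\ell+3)}$ to write $A_{n-3}^{(n)}$ as a single sum in powers of $\beta_j$ and $(\beta_i-\beta_j)$. The main obstacle is precisely this last bookkeeping step: one must re-expand the powers of $(\beta_i-\beta_j)$ and recollect in the monomial basis $\beta_i^{\,n-3-\ell}\beta_j^{\ell}$ to recognise the coefficients $(\ell+1)(n-2-\ell)$. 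A route that sidesteps the substitution is to compute $J_{n-3}=\int_{\beta_j}^{\beta_i}(t-\beta_i)(t-\beta_j)t^{\,n-3}\,dt$ directly as $\tfrac{\beta_i^n-\beta_j^n}{n}-(\beta_i+\beta_j)\tfrac{\beta_i^{n-1}-\beta_j^{n-1}}{n-1}+\beta_i\beta_j\tfrac{\beta_i^{n-2}-\beta_j^{n-2}}{n-2}$ and then check the polynomial identity that this equals $\tfrac{-(\beta_i-\beta_j)^3}{n(n-1)(n-2)}\sum_\ell(\ell+1)(n-2-\ell)\beta_i^{\,n-3-\ell}\beta_j^{\ell}$; throughout, the symmetry of the target under $\beta_i\leftrightarrow\beta_j$, matching the sign change of $P_n(\beta_i)-P_n(\beta_j)$, is a convenient consistency check.
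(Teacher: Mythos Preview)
Your argument is correct and follows a genuinely different route from the paper. The paper expands $P_n(X)$ as an explicit polynomial in $X$ with coefficients in the elementary symmetric functions of all the $\beta$'s, forms the difference $P_n(\beta_i)-P_n(\beta_j)$, pulls out one factor $(\beta_i-\beta_j)$ using the sums $S_n^k=\beta_i^{n-k}+\cdots+\beta_j^{n-k}$, and then extracts the remaining $(\beta_i-\beta_j)^2$ from the $\sigma_0$-bracket by Horner's method; the recurrence is obtained by writing the analogous expansion at level $n+1$ and comparing, via $S_{n+1}^{k+1}=S_n^k$.

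Your approach is more conceptual: factoring $Q_{n-1}(t)=n(t-\beta_i)(t-\beta_j)R(t)$ and making the affine substitution $t=\beta_j+(\beta_i-\beta_j)s$ produces the cube $(\beta_i-\beta_j)^3$ and the linearity in the $\sigma_k$ simultaneously and for free, and the recurrence becomes a one-line observation because your integrals $I_d$ are independent of $n$. What the paper's method buys is that the closed form of $A_{n-3}^{(n)}$ drops out of the Horner step, whereas in your approach this last piece---re-expanding $\sum_{\ell}\binom{n}{\ell+3}\beta_j^{\,n-3-\ell}(\beta_i-\beta_j)^\ell$ into the monomial basis, or equivalently verifying the polynomial identity for $J_{n-3}$---is the one place where real work remains. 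Your route (b) for that identity, expanding $\tfrac{\beta_i^n-\beta_j^n}{n}-(\beta_i+\beta_j)\tfrac{\beta_i^{n-1}-\beta_j^{n-1}}{n-1}+\beta_i\beta_j\tfrac{\beta_i^{n-2}-\beta_j^{n-2}}{n-2}$, is essentially the paper's computation in disguise, so the two proofs converge at that single step.
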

\begin{proof}
We start with $n=3$ where we have, for $Q_2(X)=3(X-\beta_1)(X-\beta_2)$, 
$$ \int_{\beta_2}^{\beta_1}Q_2(t)dt=P_3(\beta_1)-P_3(\beta_2)=\dfrac{-(\beta_1-\beta_2)^3}{2} $$
and, for $n=4$ and 
$P_4(X)=X^4-\frac{4}{3}(\sum\beta_i)X^3+2(\sum\beta_i\beta_j)X^2-4(\prod\beta_i)X$, we find
$$ \int_{\beta_j}^{\beta_i}Q_3(t)dt=P_4(\beta_i)-P_4(\beta_j)=\dfrac{-(\beta_i-\beta_j)^3}
                                              {3\cdot2}[2(\beta_i+\beta_j)-4\sigma_1]. $$
From 
$$ \begin{array}{l}
P_n(X)=X^n-\frac{n}{n-1}(\beta_i+\beta_j+\sigma_1)X^{n-1}+\frac{n}{n-2}[\beta_i\beta_j+
                                   (\beta_i+\beta_j)\sigma_1+\sigma_2]X^{n-2}+\ldots+    \\
\quad \quad +\frac{(-1)^kn}{n-k}[\beta_i\beta_j\sigma_{k-2}+(\beta_i+\beta_j)\sigma_{k-1}+
     \sigma_k]X^{n-k}+\ldots+(-1)^{n-1}n\beta_i\beta_j\sigma_{n-3}X                                 
\end{array}  $$ 
we obtain ($S_n^k=\beta_i^{n-k}+\beta_i^{n-k-1}\beta_j+\beta_i^{n-k-2}\beta_j^2+\ldots+
\beta_j^{n-k}$ and $S_n^n=1$):
$$ \begin{array}{l}
\dfrac{P_n(\beta_i)-P_n(\beta_j)}{\beta_i-\beta_j}=[S_n^1-\frac{n}{n-1}(\beta_i+\beta_j)S_n^2
                                                   +\frac{n}{n-2}\beta_i\beta_jS_n^3]+   \\
\quad \quad +\sigma_1[-\frac{n}{n-1}S_n^2+\frac{n}{n-2}(\beta_i+\beta_j)S_n^3
                                            -\frac{n}{n-3}\beta_i\beta_jS_n^4]+\ldots+   \\
\quad \quad +\sigma_k[\frac{(-1)^kn}{n-k}S_n^{k+1}+\frac{(-1)^{k+1}n}{n-k-1}(\beta_i+\beta_j)
                     S_n^{k+2}+\frac{(-1)^{k+2}n}{n-k-2}\beta_i\beta_jS_n^{k+3}]+\ldots+  \\                                             
\quad \quad +\sigma_{n-3}[\frac{(-1)^{n-3}n}{3}S_n^{n-2}+\frac{(-1)^{n-2}n}{2}
        (\beta_i+\beta_j)S_n^{n-1}+(-1)^{n-1}n\beta_i\beta_jS_n^n]. 
\end{array} $$                                                  
Clearing numerators in the first bracket, we get the polynomial
$$ R_n(\beta_i,\beta_j)=(2-n)\beta_i^{n-1}+2\beta_i^{n-2}\beta_j+2\beta_i^{n-3}\beta_j^2+
                                    \ldots+2\beta_i\beta_j^{n-2}+(2-n)\beta_j^{n-1}; $$
Horner's method, applied twice, gives the expansion:
$$ \dfrac{R_n(\beta_i,\beta_j)}{(\beta_i-\beta_j)^2}=-[(n-2)\beta_i^{n-3}+2(n-3)\beta_i^{n-4}\beta_j+
    3(n-4)\beta_i^{n-5}\beta_j^2+\ldots+(n-2)\beta_j^{n-3}]. $$
Hence the difference $P_n(\beta_i)-P_n(\beta_j)$ starts with the term
$$  \dfrac{-(\beta_i-\beta_j)^3}{(n-1)(n-2)}A_{n-3}^{(n)}(\beta_i,\beta_j).  $$    
For the induction step, we fix two variables $\beta_i,\beta_j$ ($i,j\in\{1,2,\ldots,n\}$)
and we denote by $\overline{\sigma}_1,\overline{\sigma}_2,\ldots,\overline{\sigma}_{n-2}$ 
the elementary symmetric polynomials in the rest of variables. Comparing the previous
expansion with the following one
$$ \begin{array}{l}
\dfrac{P_{n+1}(\beta_i)-P_{n+1}(\beta_j)}{\beta_i-\beta_j}=[S_{n+1}^1-\frac{n+1}{n}
                   (\beta_i+\beta_j)S_{n+1}^2+\frac{n+1}{n-1}\beta_i\beta_jS_{n+1}^3]+   \\
\quad \quad +\overline{\sigma}_1[-\frac{n+1}{n}S_{n+1}^2+\frac{n+1}{n-1}(\beta_i+\beta_j)
                             S_{n+1}^3-\frac{n+1}{n-2}\beta_i\beta_jS_{n+1}^4]+\ldots+   \\
\quad \quad +\overline{\sigma}_{k+1}[\frac{(-1)^{k+1}(n+1)}{n-k}S_{n+1}^{k+2}+
   \frac{(-1)^{k+2}(n+1)}{n-k-1}(\beta_i+\beta_j)S_{n+1}^{k+3}+\frac{(-1)^{k+3}(n+1)}
                                            {n-k-2}\beta_i\beta_jS_{n+1}^{k+4}]+  \\                                             
\quad \quad +\ldots+\overline{\sigma}_{n-2}[\frac{(-1)^{n-2}(n+1)}{3}S_{n+1}^{n-1}+
   \frac{(-1)^{n-1}(n+1)}{2}(\beta_i+\beta_j)S_{n+1}^n+(-1)^n(n+1)\beta_i\beta_j] 
\end{array} $$  
and using $S_{n+1}^{k+1}=S_n^k$ we obtain the recurrence relation.   
\end{proof}
Now we can redefine the restricted configuration spaces. Let us denote 
$H_{i,j}$ the hyperplane in $\mathbb{C}^{n-1}$ given by the equation $z_i=z_j$ and by $S_{i,j}$ 
the hypersurface in $\mathbb{C}^{n-1}$ given by the homogeneous equation of degree $n-3$: 
$$ A_{n-3}^{(n)}(z_i,z_j)+\sum_{k=1}^{n-3}A_{n-3-k}^{(n)}(z_i,z_j)\sigma_k=0. $$
\begin{definition}\label{redef1}
We define, for $n=2$,  
$$ Q\mathcal{C}_1=Q\mathcal{F}_1=\mathcal{C}_1=\mathbb{C},\,R\mathcal{F}_2=\mathcal{F}_2
                        \mbox{ and } R\mathcal{C}_2=\mathcal{C}_2. $$
\noindent For $n=3$ 
$$ \begin{array}{l}
    Q\mathcal{F}_2=\mathcal{F}_2,\,R\mathcal{F}_3=\{(\alpha_1,\alpha_2,\alpha_3)\in
        \mathcal{F}_3\mid \alpha_1^2+\alpha_2^2+\alpha_3^2\neq
        \alpha_1\alpha_2+\alpha_1\alpha_3+\alpha_2\alpha_3 \},                             \\
Q\mathcal{C}_2=\mathcal{C}_2\mbox{ and }R\mathcal{C}_3=\{\{\alpha_1,\alpha_2,\alpha_3\}\in
        \mathcal{C}_3\mid \alpha_1^2+\alpha_2^2+\alpha_3^2\neq
        \alpha_1\alpha_2+\alpha_1\alpha_3+\alpha_2\alpha_3 \}.  
\end{array}        $$

\noindent For $n\geq 4$ we define
$$ Q\mathcal{F}_{n-1}=\mathbb{C}^{n-1}\setminus \cup_{i\neq j}(H_{i,j}\cup S_{i,j}),\,
        Q\mathcal{C}_{n-1}=Q\mathcal{F}_{n-1}/ 
       \Sigma_{n-1} $$
and 
$$ R\mathcal{C}_n=\mathcal{C}_n\cap D^{-1}(Q\mathcal{C}_{n-1}),\,
      R\mathcal{F}_n=p^{-1}(R\mathcal{C}_n) $$
(here $p$ is the covering map $p:\mathcal{F}_n\rightarrow \mathcal{C}_n$).
\end{definition}  

\begin{lemma}\label{lema1}
Take $\gamma_*^0=(\gamma_1^0,\gamma_2^0,\ldots,\gamma_{n-1}^0),\gamma_*=(\gamma_1,\gamma_2,\ldots,
\gamma_{n-1})\in\mathbb{C}^{n-1}$ and $\varepsilon >0$ such that, for any $i$, 
$|\gamma_i-\gamma_i^0|<\varepsilon$ and also $|\gamma_i^0-\gamma_j^0|>3\varepsilon$ for any $i\neq j$. 
Then there is an homeomorphism $\Phi_{\gamma_*^0,\gamma_*}:\mathbb{C}\rightarrow\mathbb{C}$ with 
the following properties:

a) $\Phi_{\gamma_*^0,\gamma_*}(\gamma_i^0)=\gamma_i$ for any $i=1,2,\ldots,n-1$;

b) $\Phi_{\gamma_*^0,\gamma_*}(z)=z$ if $|z-\gamma_i^0|\geq \varepsilon$ for any $i=1,2,\ldots,n-1$;

c) $\Phi_{\gamma_*^0,\gamma_*}$ is continuous in $\gamma_*$.
\end{lemma}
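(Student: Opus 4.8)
The plan is to construct $\Phi_{\gamma_*^0,\gamma_*}$ explicitly as a ``radially damped translation'' supported inside the disjoint $\varepsilon$-disks around the points $\gamma_i^0$, and then verify the three properties by elementary estimates. First I observe that the hypothesis $|\gamma_i^0-\gamma_j^0|>3\varepsilon$ (in particular $>2\varepsilon$) forces the closed disks $\bar{D}_i=\{z\in\C\mid |z-\gamma_i^0|\le\varepsilon\}$ to be pairwise disjoint. Hence it suffices to prescribe $\Phi_{\gamma_*^0,\gamma_*}$ separately on each $\bar{D}_i$ and to set it equal to the identity on $\C\setminus\bigcup_i D_i$ (the complement of the open disks), and then check that these pieces patch together.

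On each disk, writing $v_i=\gamma_i-\gamma_i^0$ (so that $|v_i|<\varepsilon$ by hypothesis), I would take
$$ \Phi_{\gamma_*^0,\gamma_*}(z)=z+\Bigl(1-\tfrac{|z-\gamma_i^0|}{\varepsilon}\Bigr)v_i\qquad(z\in\bar{D}_i). $$
At the center this gives $\gamma_i^0+v_i=\gamma_i$, which is property (a); on the boundary circle $|z-\gamma_i^0|=\varepsilon$ the coefficient $1-|z-\gamma_i^0|/\varepsilon$ vanishes, so $\Phi_{\gamma_*^0,\gamma_*}$ reduces to the identity there. This both yields property (b) and shows that the local formulas match the identity defined off the disks, so the pieces glue to a single continuous self-map of $\C$.

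The core of the argument is to check that each local piece is a homeomorphism of $\bar{D}_i$ onto itself. For injectivity, $\Phi_{\gamma_*^0,\gamma_*}(z_1)=\Phi_{\gamma_*^0,\gamma_*}(z_2)$ rearranges to $z_1-z_2=\tfrac{1}{\varepsilon}\bigl(|z_1-\gamma_i^0|-|z_2-\gamma_i^0|\bigr)v_i$, and the reverse triangle inequality combined with $|v_i|<\varepsilon$ gives $|z_1-z_2|\le\tfrac{|v_i|}{\varepsilon}|z_1-z_2|$, forcing $z_1=z_2$. A short estimate shows the map sends $\bar{D}_i$ into itself and fixes $\partial D_i$ pointwise, so it is a continuous injection of the compact disk fixing its boundary; invariance of domain together with a connectedness argument (the image of the interior is open, and closed in the interior since boundary maps only to boundary) gives surjectivity onto $\bar{D}_i$. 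A continuous bijection of a compact Hausdorff space is a homeomorphism, and gluing finitely many such local homeomorphisms agreeing on the identity region produces a global homeomorphism of $\C$.

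Property (c) is then immediate from the formula: the dependence on $\gamma_*$ enters only through the vectors $v_i=\gamma_i-\gamma_i^0$, and this dependence is affine, so $(z,\gamma_*)\mapsto\Phi_{\gamma_*^0,\gamma_*}(z)$ is jointly continuous. I expect the main obstacle to be the clean verification of bijectivity of each local piece, in particular its surjectivity; this is exactly where the hypothesis $|v_i|<\varepsilon$ (equivalently, the damping ratio $|v_i|/\varepsilon<1$) is essential, whereas the separation hypothesis $|\gamma_i^0-\gamma_j^0|>3\varepsilon$ is used only to keep the supports $\bar{D}_i$ disjoint so that the local constructions do not interfere.
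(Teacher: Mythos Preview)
Your construction is exactly the same as the paper's: the paper also defines $\Phi_{\gamma_*^0,\gamma_*}(z)=z+(\gamma_i-\gamma_i^0)\bigl(1-\tfrac{|z-\gamma_i^0|}{\varepsilon}\bigr)$ on each $\varepsilon$-disk and the identity elsewhere, noting only the Lipschitz bound $|\Phi_{\gamma_*^0,\gamma_*}(z)-\Phi_{\gamma_*^0,\gamma_*'}(z)|\le\max_i|\gamma_i-\gamma_i'|$ for (c). Your write-up is in fact more thorough than the paper's, which leaves the bijectivity verification entirely to the reader.
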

\begin{proof}
One way to obtain such a map is to transform the segments $[\gamma_i^0,\xi]$ (where 
$|\xi-\gamma_i^0|=\varepsilon$) onto the segments $[\gamma_i,\xi]$. Here is an example of a map with 
this property:
$$  \Phi_{\gamma_*^0,\gamma_*}(z)=\begin{cases}
 z+(\gamma_i-\gamma_i^0)(1-\frac{|z-\gamma_i^0|}{\varepsilon}) 
                        & \mbox{ if }|z-\gamma_i^0|\leq\varepsilon,                    \\
 z                      & \mbox{ if }|z-\gamma_j^0|\geq\varepsilon \mbox{ for any }j.  \\
\end{cases}      $$
It is obvious that $|\Phi_{\gamma_*^0,\gamma_*}(z)-\Phi_{\gamma_*^0,\gamma_*'}(z)|\leq
{\rm max}_i|\gamma_i-\gamma_i'|$. 
\end{proof}
\noindent \emph{Proof of Theorem \ref{thm1}}.
For $n=2$ we have the fibration:
$$ \mathbb{C}\setminus 1\hookrightarrow R\mathcal{C}_2=\mathcal{C}_2\stackrel{D}\longrightarrow
        Q\mathcal{C}_1=\mathbb{C}            $$
where $D(\{z_1,z_2\})=\frac{z_1+z_2}{2}$, with the section $I(z)=\{z+1,z-1\}$. 

If $n=3$ we use the first equality in the proof of Proposition \ref{prop1} and the formula for the 
discriminant of the derivative of polynomial $P_3(X)=\prod_{i=1}^3(X-\alpha_i)$: 
$$ {\bf D}_{P'_3}=4(\sum\alpha_i)^2 -12\sum\alpha_i\alpha_j=4(\sum\alpha_i^2 -\sum\alpha_i\alpha_j). $$
\noindent For $n\geq 4$ consider a point 
$Q^0_{n-1}(X)=n(X-\beta_1^0)\ldots(X-\beta_{n-1}^0)\in Q\mathcal{C}_{n-1}$. 
Choose $\delta_1>0$ such that $|\beta_i^0-\beta_j^0|>3\delta_1$ for any $i\neq j$ and a compact disk $K$
containing all the disks $|z-\beta_i^0|<\delta_1$. The fiber $D^{-1}(Q^0_{n-1})$ is the set
$$ \{ P_n^0(X)-\gamma\,|\,\gamma\neq P_n^0(\beta_i^0)\mbox{ for any }i\},\mbox{ where }
                                                       P_n^0(X)=\int_0^XQ^0_{n-1}(t)dt. $$
Chose $\varepsilon>0$ such that $|P_n^0(\beta_i^0)-P_n^0(\beta_j^0)|>3\varepsilon$ (for any $i\neq j$) 
and $\delta_2>0$ such that, for any two points in $K$ satisfying $|\xi-\zeta|<\delta_2$, we have 
$|P_n^0(\xi)-P_n^0(\zeta)|<\frac{\varepsilon}{2}$. Consider an arbitrary point 
$Q_{n-1}(X)=n(X-\beta_1)\ldots(X-\beta_{n-1})\in Q\mathcal{C}_{n-1}$ and $P_n(X)=\int_0^XQ_{n-1}(t)dt$. 
Chose $\delta_3>0$ such that ${\rm sup}_K|P_n(\xi)-P_n^0(\xi)|<\frac{\varepsilon}{2}$ if 
${\rm max}_i|\beta_i-\beta_i^0|<\delta_3$. Take $\delta={\rm min}(\delta_1,\delta_2,\delta_3)$ 
and the neighbourhood $V$ of $Q^0_{n-1}$ defined by 
$$ V=\{Q_{n-1}(X)=n(X-\beta_1)\ldots(X-\beta_{n-1})\,|\,{\rm max}_i {\rm min}_j
                                                               |\beta_i^0-\beta_j|<\delta\}. $$
Every $\beta_i^0$ has at least one $\beta_j$ at distance $<\delta$ and none of $\beta_j$ could be
close to two distinct $\beta_i^0$ (after a re-indexing we can take $|\beta_i^0-\beta_i|<\delta$).
We have 
$$ |P_n(\beta_i)-P_n^0(\beta_i^0)|\leq|P_n(\beta_i)-P_n^0(\beta_i)|+
                                                    |P_n^0(\beta_i)-P_n^0(\beta_i^0)|<\varepsilon,  $$
and, with $\gamma_*^0=(P_n^0(\beta_1^0),\ldots,P_n^0(\beta_{n-1}^0))$ and 
$\gamma_*=(P_n(\beta_1),\ldots,P_n(\beta_{n-1}))$, we can use Lemma \ref{lema1}; we define 
the trivialization map 
$$ \begin{array}{l} 
\Psi:V\times(\mathbb{C}\setminus\{P_n^0(\beta_i^0)\mid i=1,2,\ldots,n-1\})\rightarrow D^{-1}(V), \\
\Psi(Q_{n-1}(X),z)=\Psi(n(X-\beta_1)\ldots(X-\beta_{n-1}),z)=P_n(X)-\Phi_{\gamma_*^0,\gamma_*}(z). 
\end{array}  $$
It is obvious that 
$$ I(Q_{n-1}(X))=\int_0^XQ_{n-1}(t)dt-\left(1+\sum_{i=1}^{n-1}
                                \left|\int_0^{\beta_i}Q_{n-1}(t)dt\right|\right) $$
($\beta_i$ are the roots of $Q_{n-1}(X)$) gives a section of this fibration. 
\hfill{$\square$}

Gauss-Lukas' theorem implies that that the projection map $D$ is decreasing: if 
$\{\alpha_1,\ldots,\alpha_n\}$ and $\{\beta_1,\ldots,\beta_{n-1}\}$ are the roots of $P(X)$ and $D(P)$, 
then the convex hull of $\{\beta_*\}$ is included in the interior of the the convex hull of 
$\{\alpha_*\}$ (if $\alpha_*$ are collinear, then the segment of $\beta_*$ is included in 
the interior of the segment of $\alpha_*$). 
\begin{example}
$\bf{n=2.}$ In this case there are no 'restrictions': 
\begin{center}
\begin{picture}(360,55)               
\put(75,5){$\mathbb{C}\setminus 1$}                  \put(110,7){$\hookrightarrow$}     
\put(140,5){$R\mathcal{C}_2=\mathcal{C}_2$}          \put(200,10){\vector(1,0){40}}
\put(250,5){$Q\mathcal{C}_1=\mathcal{C}_1,\,$}       \put(220,15){$D$}
\put(140,35){$R\mathcal{F}_2=\mathcal{F}_2$}         \put(160,20){$\downarrow$}           
\multiput(265,28)(2,0){2}{\line(0,-1){10}}           \put(140,20){$Rp$}
\put(250,35){$Q\mathcal{F}_1=\mathcal{F}_1$}         \put(235,38){\vector(1,0){5}}
\multiput(200,35)(3,0){12}{$\cdot$}                  \put(220,45){$\tilde{D}$}
\end{picture}
\end{center}  
the derivative is given by $\{z_1,z_2\}\mapsto \frac{z_1+z_2}{2}$ and all spaces are 
$K(\pi,1)$ spaces. Only in this case there is a lift $\tilde{D}$ of the derivative,
$(z_1,z_2)\mapsto\frac{z_1+z_2}{2}$, and a trivial fibration, here 
$\Psi(z_1,z_2)=(\frac{z_1+z_2}{2},\frac{z_1-z_2}{2})$:
\begin{center}
\begin{picture}(360,60) 
\put(130,45){$R\mathcal{F}_2=\mathcal{F}_2$}        \put(185,47){\vector(1,0){45}}
\put(235,45){$Q\mathcal{F}_1\times\mathbb{C}^*$}    \put(160,40){\vector(1,-1){20}}
\put(183,7){$Q\mathcal{F}_1=\mathcal{F}_1$}         \put(250,40){\vector(-1,-1){20}}
\put(205,50){$\Psi$}   \put(151,23){$\tilde{D}$}    \put(247,23){$pr_1$} 
\end{picture}
\end{center}  
\end{example}


\section{Braids of cubic polynomials}\label{sec3}

$\bf{n=3.}$ The base and the fiber in the fibration
$$ \mathbb{C}\setminus 2 \hookrightarrow R\mathcal{C}_3\rightarrow Q\mathcal{C}_2=
                                                                      \mathcal{C}_2 $$
are $K(\pi,1)$ spaces, therefore $R\mathcal{C}_3$ is also a $K(\pi,1)$ space.  We choose 
the base point in $Q\mathcal{C}_2$ the point $3(X^2-1)$ (or $\{-1,1\}$); in 
$R\mathcal{C}_3$ we choose $X^3-3X$ (or $\{-\sqrt{3},0,\sqrt{3}\}$) as the base point 
and $0$ as the base point in $\mathbb{C}\setminus\{-2,2\}$, the fiber $D^{-1}(\{-1,1\})$. 
We define $a(t),b(t)$ representing the two generators of 
$\pi_1(\mathbb{C}\setminus\{-2,2\})\cong F(2)$ by
$$ \begin{array}{l}
   a(t)=\theta(t)+\eta(t)i=\begin{cases}
                \frac{27}{5}t                 & \mbox{ if }t\in[0,\frac{1}{3}],            \\
                2+\frac{1}{5}e^{3\pi i(1-2t)} & \mbox{ if }t\in[\frac{1}{3},\frac{2}{3}],  \\
                \frac{27}{5}(1-t)             & \mbox{ if }t\in[\frac{2}{3},1],            \\
                           \end{cases}    \\ 
   b(t)=-a(t),   \,\, t\in[0,1].
\end{array}$$
We will use $a(t)\neq \pm 2$, $\vert a(t)\vert<\frac{9}{4}$ and $a(t)=\overline{a(1-t)}$. The
generator of $\pi_1(Q\mathcal{C}_2)\cong F(1)$ is represented by the loop 
$c(t)=3(X^2-e^{2\pi it})$ or, equivalently, $c(t)=\{-e^{\pi it},e^{\pi it}\}$. The images of 
$a(t),b(t)$ in $\pi_1(R\mathcal{C}_3)=R\mathcal{B}_3$ are the polynomials:
$$ \alpha(t)=X^3-3X+a(t),\,\beta(t)=X^3-3X+b(t) $$
and $\gamma(t)=X^3-3e^{2\pi it}X$ is a lift of $c(t)$ in this group. The homotopy exact 
sequence of fibration (with a section) 
$\mathbb{C}\setminus 2 \hookrightarrow R\mathcal{C}_3\rightarrow 
Q\mathcal{C}_2$ 
gives a semi-direct decomposition: 
$$ R\mathcal{B}_3\cong F\langle \alpha,\beta \rangle\rtimes F\langle \gamma\rangle $$
In the following pictures, by convention, small circles $\circ$ stand for the base points and 
bullets $\bullet$ stand for the initial and final points of braids or for the missing points
in $\mathbb{C}\setminus n$.
\begin{center}
\begin{picture}(360,70)               
\multiput(48,40)(65,0){2}{\circle{20}}          \put(55,5){$b$}       \put(97,5){$a$}
\multiput(45,37)(65,0){2}{$\bullet $}           \put(37,57){$-2$}     \put(109,57){$2$}
\put(165,35){$\alpha$}                          \put(260,37){$-1$}    \put(325,37){$1$}
\multiput(195,7)(0,60){2}{$\gamma$}             \put(300,40){\circle{40}} 
\put(180,20){\vector(1,0){50}}                  \put(180,60){\line(1,0){40}}
\multiput(200,20)(0,40){2}{\vector(1,0){5}}     \put(180,20){\vector(0,1){50}} 
\multiput(180,40)(40,0){2}{\vector(0,1){5}}     \put(220,20){\line(0,1){40}}
\multiput(277,37)(40,0){2}{$\bullet $}          \put(299,5){$c$}      \put(297,37){$\cdot$}
\put(70,37){\vector(-1,0){9}}                   \put(87,43){\vector(1,0){9}}
\put(300,60){\vector(-1,0){9}}                  \put(300,20){\vector(1,0){9}} 
\put(183,35){$H(t,s)$}                          \put(231,15){$t$}     \put(171,65){$s$}
\multiput(34,35)(92,0){2}{\line(0,1){10}}       \put(15,40){$-\frac{9}{4}$}
\put(129,40){$\frac{9}{4}$}                     \put(225,35){$\beta$}
\multiput(57,39)(0,2){2}{\line(1,0){45}}        \put(112,50){\vector(1,0){9}} 
\put(48,30){\vector(-1,0){9}}                   \put(77,37){$\circ$} 
\end{picture}
\end{center}
\begin{lemma}\label{lema31}
The action of $\gamma$ in $R\mathcal{B}_3$ is given by $\gamma\alpha\gamma^{-1}=\beta$, 
$\gamma\beta\gamma^{-1}=\alpha$. 
\end{lemma}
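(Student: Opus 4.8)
The relation to prove records the monodromy of the fibration $\mathbb{C}\setminus 2\hookrightarrow R\mathcal{C}_3\to Q\mathcal{C}_2$ along the base loop $c=[\gamma]$, so the plan is to make this transport explicit. First I would compute the critical values of the lift $\gamma(t)=X^3-3e^{2\pi i t}X$: its derivative $3(X^2-e^{2\pi i t})$ has roots $\pm e^{\pi i t}$, and evaluating $\gamma(t)$ there gives the two critical values $\pm 2e^{3\pi i t}$. Thus, as $t$ runs over $[0,1]$, the two punctures of the fibre $\mathbb{C}\setminus\{\pm 2\}$ rotate rigidly by the angle $3\pi$ about the centre $\gamma=0$ --- which is exactly the base point of the fibre --- and are interchanged, while the base point stays fixed. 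This already predicts that the monodromy is the clean interchange $\alpha\leftrightarrow\beta$ rather than a braid automorphism, precisely because the transport can be realised by the rigid rotation $z\mapsto e^{3\pi i t}z$ fixing the centre.

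To turn this into a proof I would exhibit the homotopy drawn in the figure explicitly, namely the map of the square
\[
H\colon [0,1]^2\longrightarrow R\mathcal{C}_3,\qquad H(t,s)=X^3-3e^{2\pi i t}X+e^{3\pi i t}\,a(s).
\]
The coefficient $e^{3\pi i t}$ is forced by the previous computation: it rotates the fibre loop $a$ at the same angular speed as the critical values, so that $a(s)$ is carried along with the puncture descending from $+2$. I would then check the four edges: since $a(0)=a(1)=0$ one gets $H(t,0)=H(t,1)=\gamma(t)$; since $e^{3\pi i\cdot 0}=1$ one gets $H(0,s)=X^3-3X+a(s)=\alpha(s)$; and since $e^{3\pi i}=-1$ and $b=-a$ one gets $H(1,s)=X^3-3X-a(s)=\beta(s)$.

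The essential verification is that $H$ really lands in $R\mathcal{C}_3$ for every $(t,s)$. The derivative $D(H(t,s))=3(X^2-e^{2\pi i t})$ has the distinct roots $\pm e^{\pi i t}$, so its image lies in $Q\mathcal{C}_2=\mathcal{C}_2$; and $H(t,s)$ itself has a multiple root exactly when its free coefficient satisfies $e^{3\pi i t}a(s)=\pm 2e^{3\pi i t}$, that is when $a(s)=\pm 2$, which is excluded by the standing assumption $a(s)\neq\pm 2$. Hence $H(t,s)\in\mathcal{C}_3\cap D^{-1}(Q\mathcal{C}_2)=R\mathcal{C}_3$ throughout. Reading the boundary of the square counter-clockwise then gives the null-homotopic word $\gamma\beta\gamma^{-1}\alpha^{-1}$, i.e. $\gamma\beta\gamma^{-1}=\alpha$; running the same argument with $a$ replaced by $b=-a$ (equivalently, reading the companion relation off the figure) yields $\gamma\alpha\gamma^{-1}=\beta$.

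I expect the only delicate point to be the bookkeeping in the last paragraph: pinning down the correct rotation factor $e^{3\pi i t}$ and confirming that with it the free coefficient never meets the moving critical values. The critical-value computation $\pm 2e^{3\pi i t}$ is what makes the choice of $H$ canonical and is the heart of the matter; once it is in hand, the boundary identifications and the membership check are straightforward.
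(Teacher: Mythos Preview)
Your argument is correct and in fact cleaner than the paper's. Both proofs proceed by writing down an explicit homotopy $H:[0,1]^2\to R\mathcal{C}_3$ whose four edges are $\alpha$, $\beta$, $\gamma$, $\gamma$, and then checking that the discriminant of $H(t,s)$ never vanishes; in both cases the discriminant works out to $27e^{6\pi it}(4-a(s)^2)\neq 0$. The difference is in the choice of $H$. You take
\[
H(t,s)=X^3-3e^{2\pi it}X+e^{3\pi it}a(s),
\]
which projects to the constant loop $D(H(t,s))=c(t)$ in the base and is literally the parallel transport of the fibre loop along $c$ under the rigid rotation $z\mapsto e^{3\pi it}z$ of the fibre coordinate; this rotation fixes the base point $0$ and carries the punctures $\pm 2$ to $\mp 2$ at $t=1$, hence swaps $\alpha$ and $\beta$. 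The paper instead uses
\[
H(t,s)=X^3-3e^{2\pi it}\mu(t,s)X+a(s)\nu(t),\qquad \mu(t,s)=\sqrt[3]{1+\tfrac{(t-t^2)a(s)^2}{4(1-t+t^2)}},\ \ \nu(t)=\tfrac{e^{3\pi it}}{\sqrt{1-t+t^2}},
\]
with auxiliary factors $\mu,\nu$ engineered so that the same discriminant identity holds; this version has the minor cost that its projection to the base depends on $s$, and one must also argue that the cube root is well defined (via $|\varrho|<\tfrac{27}{64}$). Your observation that the critical values of $\gamma(t)$ are exactly $\pm 2e^{3\pi it}$ is what makes the simpler choice possible, and it renders the extra correction factors unnecessary.
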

\begin{proof}
We define the homotopy $H(t,s)=X^3-3e^{2\pi it}\mu(t,s)X+a(s)\nu(t)$, where
$$ \mu(t,s)=\sqrt[3]{1+\frac{(t-t^2)a^2(s)}{4(1-t+t^2)}}\mbox{ and }   
      \nu(t)=\frac{e^{3\pi it}}{\sqrt{1-t+t^2}};  $$ 
the absolute value of $\varrho=\frac{(t-t^2)a^2(s)}{4(1-t+t^2)}$ is less than $\frac{27}{64}$, 
hence Re$(1+\varrho)>0$, so we can choose the (well defined) branch of $\sqrt[3]{1+\varrho}$ 
satisfying $\sqrt[3]{1}=1$. It is clear that
$$ H(0,s)=\alpha(s),\, H(1,s)=-\alpha(s)=\beta(s)\mbox{ and }
H(t,0)=H(t,1)=\gamma(t)   $$
and every polynomial $H(t,s)$ has non-zero discriminant (${\bf D}_{X^3+pX+q}=-4p^3-27q^2$):
$$ \begin{array}{lll}
{\bf D}_{H(t,s)} & = & 4\cdot 27e^{6\pi it}\mu^3(t,s)-27a^2(s)\nu^2(t)=                       \\
                & = & 27e^{6\pi it}[4+\frac{(t-t^2)a^2(s)}{1-t+t^2}-\frac{a^2(s)}{1-t+t^2}]=  \\
                & = & 27e^{6\pi it}[4-a^2(s)]\neq 0.
\end{array}$$
The derivative of $H(t,s)$, $3(X^2-e^{2\pi it}\mu(t,s))$, has a non-zero discriminant, too. 
Therefore $H(t,s)$ gives a homotopy $\alpha\ast\gamma\simeq \gamma\ast\beta$. 
Replacing $a(t)$ with $b(t)$ in the definition of $H(t,s)$ we obtain a homotopy 
$\beta\ast\gamma\simeq\gamma\ast\alpha$.                                  
\end{proof}
\textit{Proof of Theorem \ref{thm31}} a) Reidemeister-Schreier algorithm (see \cite{MKS})
gives a presentation of the normal subgroup $R\mathcal{P}_3\triangleleft R\mathcal{B}_3$: 
we take the Schreier representative system 
$$ [\bar{1},\bar{\alpha},\bar{\beta},\bar{\gamma},\overline{\alpha\beta},
                   \overline{\beta\alpha}]\mapsto [id,(23),(12),(13),(132),(123)].  $$
The images of $\alpha$ and $\beta$ through the homomorphism 
$R\mathcal{B}_3\stackrel{j_*}{\rightarrow}\mathcal{B}_3\rightarrow \Sigma_3$ are the
permutations $(23)$ and $(12)$: a proof is given in part b).                  
From the rewriting system $\tau$ applied to relations 
$\varrho_1=\alpha\gamma\beta^{-1}\gamma^{-1}$, $\varrho_2=\beta\gamma\alpha^{-1}\gamma^{-1}$
and their conjugates, we choose the generators $s_{\bar{\alpha}\alpha}=\alpha^2$, 
$s_{\bar{\alpha}\gamma}=\alpha\gamma\alpha^{-1}\beta^{-1}$, $s_{\bar{\beta}\beta}=\beta^2$, 
$s_{\bar{\beta}\gamma}=\beta\gamma\beta^{-1}\alpha^{-1}$ and $s_{\bar{\gamma}\gamma}=\gamma^2$
and eliminate the others $s_{**}$ (relations $\tau(s_{**})=1$ are not listed):
$$ \begin{array}{lll}
     \tau(\varrho_1)=s_{\bar{\alpha}\gamma}s_{\bar{\gamma}\beta}^{-1} &
     \tau(\varrho_2)=s_{\bar{\beta}\gamma}s_{\bar{\gamma}\alpha}^{-1} &
     \tau(\alpha\varrho_1\alpha^{-1})=s_{\bar{\alpha}\alpha}
     s_{\overline{\beta\alpha}\beta}^{-1}s_{\bar{\alpha}\gamma}^{-1}      \\
     \tau(\alpha\varrho_2\alpha^{-1})=s_{\overline{\alpha\beta}\gamma}
     s_{\overline{\beta\alpha}\alpha}^{-1}s_{\bar{\alpha}\gamma}^{-1} &     
     \tau(\beta\varrho_1\beta^{-1})=s_{\overline{\beta\alpha}\gamma}
     s_{\overline{\alpha\beta}\beta}^{-1}s_{\bar{\beta}\gamma}^{-1}   &
     \tau(\beta\varrho_2\beta^{-1})=s_{\bar{\beta}\beta}
     s_{\overline{\alpha\beta}\alpha}^{-1}s_{\bar{\beta}\gamma}^{-1}      \\
     \tau(\gamma\varrho_1\gamma^{-1})=s_{\bar{\gamma}\alpha}
     s_{\overline{\alpha\beta}\gamma}s_{\bar{\gamma}\gamma}^{-1}      &
     \tau(\gamma\varrho_2\gamma^{-1})=s_{\bar{\gamma}\beta}
     s_{\overline{\beta\alpha}\gamma}s_{\bar{\gamma}\gamma}^{-1}.     &
\end{array} $$
The remaining $\tau(s_{**})$'s give the defining relations of the subgroup:
$$ \begin{array}{lll} 
     \tau(\alpha\beta\varrho_1\beta^{-1}\alpha^{-1})=(s_{\bar{\beta}\gamma}^{-1}
     s_{\bar{\beta}\beta})s_{\bar{\gamma}\gamma}s_{\bar{\beta}\beta}^{-1}
     (s_{\bar{\gamma}\gamma}^{-1}s_{\bar{\beta}\gamma}) &\Rightarrow             & 
     [s_{\bar{\beta}\beta},s_{\bar{\gamma}\gamma}]=1,                              \\
     \tau(\alpha\beta\varrho_2\beta^{-1}\alpha^{-1})=(s_{\bar{\beta}\gamma}^{-1}
     s_{\bar{\alpha}\gamma}^{-1}s_{\bar{\gamma}\gamma})s_{\bar{\alpha}\gamma}
     (s_{\bar{\gamma}\gamma}^{-1}s_{\bar{\beta}\gamma}) &\Rightarrow             & 
     [s_{\bar{\alpha}\gamma},s_{\bar{\gamma}\gamma}]=1,                            \\
     \tau(\beta\alpha\varrho_1\alpha^{-1}\beta^{-1})=(s_{\bar{\alpha}\gamma}^{-1}
     s_{\bar{\beta}\gamma}^{-1}s_{\bar{\gamma}\gamma})s_{\bar{\beta}\gamma}
     (s_{\bar{\gamma}\gamma}^{-1}s_{\bar{\alpha}\gamma}) &\Rightarrow            & 
     [s_{\bar{\beta}\gamma},s_{\bar{\gamma}\gamma}]=1,                              \\
     \tau(\beta\alpha\varrho_2\alpha^{-1}\beta^{-1})=(s_{\bar{\alpha}\gamma}^{-1}
     s_{\bar{\alpha}\alpha})s_{\bar{\gamma}\gamma}s_{\bar{\alpha}\alpha}^{-1}
     (s_{\bar{\gamma}\gamma}^{-1}s_{\bar{\alpha}\gamma}) &\Rightarrow            & 
     [s_{\bar{\alpha}\alpha},s_{\bar{\gamma}\gamma}]=1.
\end{array} $$                                                   
b) In the pictures, by convention, (partial) vertical threads correspond to (parts of)
the paths on the real line and an over-crossing corresponds to a front thread moving in the 
negative half plane Im$(z)<0$ and a back thread moving in the positive half plane Im$(z)>0$.

The values of $D_*$ come from the choice of $\gamma$, a lift of the generator of 
$\pi_1(\mathcal{C}_2)$ The roots of $\gamma(t)=X^3-3e^{2\pi it}X$ are $\{0,\pm\sqrt{3}e^{\pi it}\}$, 
hence $j_*(\gamma)$ is Garside braid $\Delta_3=x_2x_1x_2=x_1x_2x_1$:
\begin{center}
\begin{picture}(360,80)               
\multiput(57,37)(20,0){3}{$\bullet $}          \put(27,35){$-\sqrt{3}$} 
\put(107,35){$\sqrt{3}$}                       \put(77,45){$0$}
\put(80,40){\oval(40,40)[t]}                   \put(80,40){\oval(40,40)[b]}
\put(80,60){\vector(-1,0){7}}                  \put(80,20){\vector(1,0){7}}
\put(70,5){$\gamma(t)$}                        \put(257,70){$0$} 
\multiput(237,18)(20,0){3}{$\bullet $}         \put(222,70){$-\sqrt{3}$}      
\multiput(237,58)(20,0){3}{$\bullet $}         \put(275,70){$\sqrt{3}$}    
\put(243,5){$\gamma\mapsto \Delta_3$}          \put(260,60){\oval(40,40)[bl]}
\multiput(260,20)(0,24){2}{\line(0,1){17}}     \put(266,60){\oval(29,37)[br]}
\put(260,20){\oval(40,40)[tr]}                 \put(254,20){\oval(29,37)[tl]}
\end{picture}
\end{center}
The roots of $\alpha(t)=X^3-3X+a(t)$ are given by three continuous functions
$$ X_1(t),X_2(t),X_3(t):[0,1]\rightarrow\mathbb{C},\, X_1(0)=-\sqrt{3},X_2(0)=0,
            X_3(0)=\sqrt{3}.  $$
None of these roots intersects the line Re$(z)=-1$ (the real part of $\alpha(-1+\lambda i)$
is $3\lambda^2+2+\theta(t)\geq 2$), hence, for any $t\in[0,1]$, Re$(X_1(t))<-1<{\rm Re}(X_{2,3}(t))$. 
Therefore the thread $X_1(t)$ of the braid $j_*(\alpha(t))$ is
separated from $X_{2,3}(t)$ and $X_1(1)=-\sqrt{3}$, $\{X_2(1),X_3(1)\}=\{0,\sqrt{3}\}$.
Rolle sequence for the real function $\alpha(t)$, $t\in[0,\frac{1}{3}]$, gives real
roots $X_{2,3}(t)$ where $X_2(t)$ covers the interval $[0,\chi_2]$, $\chi_2<\frac{4}{5}$,
and $X_3(t)$ covers $[\chi_3,\sqrt{3}]$, $\chi_3>\frac{6}{5}$. The root $ X_1(t)$ is real
if and only if $t\in[0,\frac{1}{3}]\cup\{\frac{1}{2}\}\cup[\frac{2}{3},1]$ and the roots 
$X_j(t)=U_j+V_j(t)i$ ($j=2,3$), are real if and only if 
$t\in[0,\frac{1}{3}]\cup[\frac{2}{3},1]$. For $t=\frac{1}{2}$, $X_1(\frac{1}{2})\in(-3,-2)$,
$X_2(\frac{1}{2})=\overline{X_3(\frac{1}{2})}\notin\mathbb{R}$, and this implies that, for
$t\in(\frac{1}{3},\frac{2}{3})$, $V_2(t)V_3(t)<0$ and the roots $X_2(t)$, $X_3(t)$ are 
separated by the real line Im$(z)=0$. The roots of $\alpha(t)$ are the conjugates of the
roots of $\alpha(1-t)$, hence
$\lim_{t\to\frac{2}{3},t<\frac{2}{3}}X_2(t)=\lim_{t\to\frac{1}{3},t>\frac{1}{3}}
    \overline{X_3(t)}=\chi_3$,
and we find that $X_2(1)=\sqrt{3}$, $X_3(1)=0$, therefore $j_*(\alpha)=x_2$ or 
$j_*(\alpha)=x_2^{-1}$. To see that the latter is correct, we show that 
${\rm Im}(X_2(t))\geq 0\geq{\rm Im}(X_3(t))$: from 
$$ (U+Vi)^3-3(U+Vi)+\theta+\eta i=0 \mbox{ we get }V=\dfrac{3U\eta}{-8U^3+6U+\theta}. $$
For $t<\frac{1}{3}$, we have $V_2(t)=0$ and 
$$ U_2(t)^3-3U_2(t)+\theta(t)=0, \mbox{ hence }
                 -8U_2(t)^3+6U_2(t)+\theta(t)=9U_2(t)(1-U_2^2(t)). $$
For $t$ near $\frac{1}{3}$, $U_2(t)$ is near $\chi_2<1$, hence $V_2(t)>0$.
The paths $X_{1,2,3}(t)$ and the corresponding braid $j_*(\alpha)$ are given in the picture 
\begin{center}
\begin{picture}(360,80)                   
\multiput(112,37)(20,0){2}{$\circ$} 
\multiput(47,37)(50,0){3}{$\bullet $}         \put(45,47){$-\sqrt{3}$}  
\multiput(30,40)(95,0){2}{\circle{20}}        \put(151,47){$\sqrt{3}$}
\multiput(100,40)(35,0){2}{\line(1,0){15}}    \put(93,47){$0$}
\multiput(340,20)(0,30){2}{\line(0,1){10}}    \put(40,10){$X_1(t)$}
\put(125,63){$X_2(t)$}                        \put(100,10){$X_3(t)$}
\multiput(30,50)(95,0){2}{\vector(1,0){7}}    \put(137,30){$\chi_3$}        
\multiput(30,30)(95,0){2}{\vector(-1,0){7}}   \put(105,49){$\chi_2$}
\put(40,40){\line(1,0){10}}                   \put(230,60){\oval(20,40)[br]}  
\multiput(197,18)(20,0){3}{$\bullet $}        \put(230,20){\oval(20,40)[tl]} 
\multiput(180,70)(100,0){2}{$-\sqrt{3}$}      \put(226,60){\oval(13,38)[bl]} 
\multiput(297,18)(20,0){3}{$\bullet $}        \put(232,20){\oval(13,38)[tr]} 
\multiput(235,70)(100,0){2}{$\sqrt{3}$}       \put(340,40){\oval(20,20)[r]}
\multiput(197,58)(20,0){3}{$\bullet $}        \put(310,60){\oval(20,40)[br]}  
\multiput(297,58)(20,0){3}{$\bullet $}        \put(306,60){\oval(13,38)[bl]} 
\multiput(217,70)(100,0){2}{$0$}              \put(312,20){\oval(13,38)[tr]}
\put(203,5){$\alpha\mapsto x_2^{-1}$}         \put(310,20){\oval(20,40)[tl]} 
\put(303,5){$\beta\mapsto x_1^{-1}$}          \put(200,40){\oval(20,20)[l]}
\multiput(200,20)(0,30){2}{\line(0,1){10}}    \put(123,37){$1$} 
\end{picture}
\end{center}
There is a similar proof for $j_*(\beta)=x_1^{-1}$.  

Using the values of $Rp_*(s_{**})$ from \ref{thm31},  for instance
$$ \begin{array}{lll} 
 p_*\tilde{\textit{\j}}_*(s_{\bar{\alpha}\alpha}) & = & j_*Rp_*(s_{\bar{\alpha}\alpha})=
     j_*(\alpha^2)=x_2^{-2}=p_*(A_{23}^{-1}),                                              \\
 p_*\tilde{\textit{\j}}_*(s_{\bar{\alpha}\gamma}) & = &j_*Rp_*(s_{\bar{\alpha}\gamma})=
     j_*(\alpha\gamma\alpha^{-1}\beta^{-1})=x_2^{-1}\Delta_3x_2x_1=x_2^{-1}x_1\Delta_3x_1= \\       
                                                  & = & x_2^{-1}x_1^2x_2x_1^2=
     x_2^{-2}(x_2x_1^2x_2^{-1})x_2^2x_1^2=p_*(A_{23}^{-1}A_{13}A_{23}A_{12}),              \\           
  p_*\tilde{\textit{\j}}_*(s_{\bar{\beta}\gamma}) & = & j_*Rp_*(s_{\bar{\beta}\gamma})=
     j_*(\beta\gamma\beta^{-1}\alpha^{-1})=x_1^{-1}\Delta_3x_1x_2=                         \\       
                                                  & = & x_2x_1^2x_2=
                       (x_2x_1^2x_2^{-1})x_2^2=p_*(A_{13}A_{23}),                          \\    
  p_*\tilde{\textit{\j}}_*(s_{\bar{\gamma}\gamma})& = & j_*Rp_*(s_{\bar{\gamma}\gamma})=
     j_*(\gamma^2)=(x_1x_2x_1)\Delta_3=                                                    \\ 
                                                  & = & x_1\Delta_3x_1x_2=
     x_1^2(x_2x_1^2x_2^{-1})x_2^2=p_*(A_{12}A_{13}A_{23}),   
\end{array} $$
we find the values of $\tilde{\textit{\j}}_*(s_{**})$.               \hfill $\square$ \\
\textit{Proof of Corollary \ref{cor31}.} a) The roots $\beta_1,\beta_2$ of the derivative of 
the polynomial $P_3(X)=(X-\alpha_1)(X-\alpha_2)(X-\alpha_3)$ are the foci of the ellipse 
touching the sides of triangle $\alpha_1\alpha_2\alpha_3$ at their midpoints (theorem
of van der Berg, see \cite{P}). We have $\beta_1=\beta_2$ if and only if the ellipse is a 
circle and this happens if and only if $\alpha_1\alpha_2\alpha_3$ is an equilateral triangle.

b) This is a consequence of Theorem \ref{thm31} b), because we have 
$$ \alpha_1^2+\alpha_2^2+\alpha_3^2=\alpha_1\alpha_2+\alpha_1\alpha_3+\alpha_2\alpha_3 $$ 
if and only if triangle $\alpha_1\alpha_2\alpha_3$ is equilateral. 

From factorization 
$$ \alpha_1^2+\alpha_2^2+\alpha_3^2-\alpha_1\alpha_2-\alpha_1\alpha_3-\alpha_2\alpha_3=
   (\alpha_1+\omega\alpha_2 +\omega^2\alpha_3)(\alpha_1+\omega^2\alpha_2+\omega\alpha_3), $$
the space $R\mathcal{C}_3$ is a complement of a central arrangements in $\mathbb{C}^3$ 
(see \cite{OT}), therefore  
we have a direct proof of Theorem \ref{thm31} b) and its Corollary.       \hfill $\square$      


\section{Braids of quartic polynomials}\label{sec4}

$\bf{n=4.}$ The computation of fundamental groups in Theorem \ref{thm41} is given in the order
$Q\mathcal{P}_3$, $Q\mathcal{B}_3$, $R\mathcal{B}_4$. 

$\bf{Q\mathcal{P}_3.}$ In $\mathbb{C}^3$ take $\mathcal{A}$ the arrangement of hyperplanes given by
$$ D_{ij}: [X_i=X_j], S_{ij}:[X_i+X_j=2X_k],\mbox{ where }i,j,k \mbox{ are distinct} $$
and also, in $\mathbb{C}^2$, the central arrangement of lines $\widehat{\mathcal{A}}$ given by
$$ [X-Y=0],[2X+Y=0],[X+2Y=0],[X=0],[Y=0],[X+Y=0]. $$
We take $(0,1,3)$ as base point in $\mathbb{C}^3\setminus\mathcal{A}=Q\mathcal{F}_3$ (and
also in $\mathcal{F}_3$). All the hyperplanes of $\mathcal{A}$ contain the diagonal 
$\{(z,z,z)\}$. In fact, this diagonal is the intersection of any two hyperplanes $D_{ij},S_{hk}$.
\begin{lemma}\label{lema40} There are homeomorphisms 
$$ Q\mathcal{F}_3\overset{\Phi}{\longrightarrow}S=(\mathbb{C}^2\setminus
    \widehat{\mathcal{A}})\times\mathbb{C}\overset{\Psi}{\longrightarrow}T=
                 (\mathbb{C}\setminus H)\times\mathbb{C}^*\times\mathbb{C}, $$
where $H=\{1,0,-\frac{1}{2},-1,-2\}$, given by
$$ \Phi(z_1,z_2,z_3)=(z_2+z_3-2z_1,z_1+z_3-2z_2,z_1+z_2+z_3),\, \Psi(X,Y,Z)=
     (\dfrac{X}{Y},Y,Z).   $$
\end{lemma}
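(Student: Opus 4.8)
The plan is to treat the two maps separately and reduce everything to explicit linear algebra together with a single ``slope'' coordinate change; there is no topology to do beyond recognizing two elementary homeomorphisms. First I would observe that $\Phi$ is a linear map $\mathbb{C}^3\to\mathbb{C}^3$: writing $(X,Y,Z)=(z_2+z_3-2z_1,\;z_1+z_3-2z_2,\;z_1+z_2+z_3)$, its matrix has determinant $9\neq 0$, so $\Phi$ is a linear isomorphism, hence a homeomorphism of $\mathbb{C}^3$. Inverting it gives
$$ z_1=\tfrac{Z-X}{3},\qquad z_2=\tfrac{Z-Y}{3},\qquad z_3=\tfrac{Z+X+Y}{3}. $$
The entire content for $\Phi$ is then to substitute these into the six defining equations of $\mathcal{A}$ and to check that each becomes one of the six equations of $\widehat{\mathcal{A}}$ in the variables $X,Y$ alone, with no dependence on $Z$.

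Carrying out the substitution, one finds: $D_{12}:z_1=z_2$ becomes $X-Y=0$; $D_{13}:z_1=z_3$ becomes $2X+Y=0$; $D_{23}:z_2=z_3$ becomes $X+2Y=0$; $S_{23}:z_2+z_3=2z_1$ becomes $X=0$; $S_{13}:z_1+z_3=2z_2$ becomes $Y=0$; and $S_{12}:z_1+z_2=2z_3$ becomes $X+Y=0$. These are precisely the six lines of $\widehat{\mathcal{A}}$. Since none of the resulting equations involves $Z$, each hyperplane of $\mathcal{A}$ is carried to the cylinder (line of $\widehat{\mathcal{A}}$)$\,\times\mathbb{C}$, so $\Phi(\mathcal{A})=\widehat{\mathcal{A}}\times\mathbb{C}$, and therefore $\Phi$ restricts to a homeomorphism $Q\mathcal{F}_3=\mathbb{C}^3\setminus\mathcal{A}\isom(\mathbb{C}^2\setminus\widehat{\mathcal{A}})\times\mathbb{C}=S$. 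As a consistency check, the diagonal $\{(z,z,z)\}$ maps to the $Z$-axis $\{(0,0,Z)\}$, which is the common intersection of all the cylinders, matching the observation that any two hyperplanes $D_{ij},S_{hk}$ meet along the diagonal.

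For $\Psi$ I would exploit that the line $[Y=0]$ is one of the six lines removed in forming $S$, so $Y\neq 0$ on $S$. Hence $(X,Y)\mapsto(X/Y,Y)$ is a homeomorphism from $\{Y\neq0\}$ onto $\mathbb{C}\times\mathbb{C}^*$, with continuous inverse $(t,Y)\mapsto(tY,Y)$; taking the product with the identity in $Z$ gives a homeomorphism of the ambient $\{Y\neq0\}\times\mathbb{C}$. It then remains to track the other five lines of $\widehat{\mathcal{A}}$ through the slope $t=X/Y$: the lines $X-Y=0$, $X=0$, $X+2Y=0$, $X+Y=0$, $2X+Y=0$ correspond respectively to $t=1,\,0,\,-2,\,-1,\,-\tfrac12$, i.e.\ to $t\in H=\{1,0,-\tfrac12,-1,-2\}$, while $[Y=0]$ is absorbed into the factor $Y\in\mathbb{C}^*$. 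Thus $\Psi$ carries $S$ homeomorphically onto $(\mathbb{C}\setminus H)\times\mathbb{C}^*\times\mathbb{C}=T$, and the composite $\Psi\circ\Phi$ realizes the claimed chain of homeomorphisms.

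There is no genuine obstacle here beyond careful bookkeeping: the only things to get right are the inversion of $\Phi$ and the correct pairing of each of the six hyperplanes of $\mathcal{A}$ with its line in $\widehat{\mathcal{A}}$, together with the point that exactly one of those lines, namely $[Y=0]$, is what produces the factor $\mathbb{C}^*$ rather than contributing a further puncture to $H$. The remaining five lines, which do pass to punctures, are precisely those giving $H$.
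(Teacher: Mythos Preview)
Your proof is correct and follows exactly the approach the paper takes, which is to note that $\Phi(\mathcal{A})=\widehat{\mathcal{A}}\times\mathbb{C}$ and that the lines of $\widehat{\mathcal{A}}$ correspond to $H$ via $\Psi$; you have simply supplied the explicit linear algebra and the line-by-line matching that the paper leaves as ``clear.''
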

\begin{proof}
It is clear that $\Phi(\mathcal{A})=\widehat{\mathcal{A}}\times\mathbb{C}$ and 
$H\subset\mathbb{C}$ correspond to $\widehat{\mathcal{A}}$ through $\Psi$; $(4,1,4)$ 
is base point in $S$ and $T$ corresponding to $(0,1,3)\in Q\mathcal{F}_3$. 
\end{proof}
We define five functions $a_h:[0,1]\to\mathbb{C}$, $h\in H$: 
$$ a_h(t)=\begin{cases}
    4-8t                                    & \mbox{ if }t\in[0,\frac{1}{3}],           \\
    \frac{h+1}{2}+\frac{5-3h}{6}e^{6\pi it} & \mbox{ if }t\in[\frac{1}{3},\frac{2}{3}], \\
    8t-4                                    & \mbox{ if }t\in[\frac{2}{3},1].           \\
\end{cases}      $$
As generators of $\pi_1(T)\cong F(5)\times F(1)$ we choose $\hat{\alpha}_h(t)=(a_h(t),1,4)$, 
$h\in H$, and $\hat{\beta}(t)=(4,e^{2\pi it},4)$:
\begin{center}
\begin{picture}(360,110) 
\multiput(77,57)(20,0){4}{$\bullet $}        \put(20,60){\line(1,0){190}} 
\multiput(107,57)(190,0){2}{$\bullet$}       \put(240,60){\line(1,0){110}} 
\multiput(197,57)(110,0){2}{$\circ$}         
\multiput(138,45)(170,0){2}{$1$}             \put(89,45){$-1$} 
\multiput(118,45)(180,0){2}{$0$}             \put(67,45){$-2$} 
\put(198,45){$4$} \put(300,60){\circle{20}}  \put(300,70){\vector(-1,0){5}}
\put(260,80){$\hat{\beta}$}                  \put(5,80){$(\hat{\alpha}_h)_{h\in H}$}
\put(140,60){\oval(12,12)}                   \put(130,60){\oval(32,32)} 
\put(125,60){\oval(43,43)}                   \put(175,63){\vector(-1,0){10}}
\put(165,57){\vector(1,0){10}}               \put(140,66){\vector(-1,0){5}}
\put(130,76){\vector(-1,0){5}}               \put(125,80){\vector(-1,0){5}}
\put(120,60){\oval(53,53)}                   \put(110,60){\oval(73,73)}
\put(123,87){\vector(-1,0){5}}               \put(113,97){\vector(-1,0){5}} 
\end{picture} 
\end{center}
The corresponding pure braids $\alpha_h$ and $\beta$ are the generators of $Q\mathcal{F}_3$;
for an element $\hat{\varepsilon}$ in $\pi_1(T)$, we denote with $\varepsilon$  the pure
braid $\Phi^{-1}_*\Psi^{-1}_*(\hat{\varepsilon})$.

For the second part of the theorem, we use the formulae
$$ \alpha_h(t)=\left(\frac{4-a_h(t)}{3},1,\frac{a_h(t)+5}{3}\right),\,\beta(t)=\left(
          \frac{4-4e^{2\pi it}}{3},\frac{4-e^{2\pi it}}{3},\frac{5e^{2\pi it}+4}{3}\right)  $$
and the following pictures (we have to order the three components of $a_h(\frac{1}{2})$, 
for each $h\in H$):
\begin{center}
\begin{picture}(360,80)     
\multiput(20,40)(180,0){2}{\put(7,-3){$\bullet $}\put(27,-3){$\bullet $}\put(67,-3){$\bullet $}
         \put(-17,0){\line(1,0){90}}\put(-3,-18){$0$}\put(17,-18){$1$}\put(60,-18){$3$}} 
\multiput(117,7)(180,0){2}{\put(0,0){$\bullet$}\put(23,0){$\bullet$}\put(53,0){$\bullet$}} 
\multiput(117,67)(180,0){2}{\put(0,0){$\bullet$}\put(23,0){$\bullet$}\put(53,0){$\bullet$}} 
\multiput(231,40)(23,0){2}{\circle{14}}      \put(56,40){\circle{8}}
\put(56,40){\oval(54,54)}                    \put(51,40){\oval(80,80)} 
\put(120,70){\line(4,-3){42}}                \put(115,40){\line(2,-1){60}}
\put(120,10){\line(4,3){18}}                 \put(163,40){\line(-4,-3){10}} 
\put(115,40){\line(2,1){22}}                 \put(175,70){\line(-2,-1){23}} 
\put(143,70){\line(1,-3){6}}                 \put(142,7){\line(1,3){4}}               
\put(153,40){\line(-1,-3){4}}                \put(330,40){\line(-1,-1){9}} 
\put(300,20){\line(3,2){20}}                 \put(300,60){\line(3,-2){30}}
\put(353,60){\line(-2,-3){13.5}}             \put(353,20){\line(-2,3){13.5}}
\put(323,10){\line(0,1){31}}                 \put(323,52){\line(0,1){18}}
\put(0,65){$\beta$}                          \put(197,65){$\alpha_1$} 
\multiput(56,13)(0,22){2}{\vector(1,0){5}}   \put(233,33){\vector(1,0){5}}
\put(56,80){\vector(-1,0){5}}                \put(254,47){\vector(-1,0){5}}
\multiput(215,37)(48,0){2}{\vector(1,0){5}}  \multiput(220,43)(48,0){2}{\vector(-1,0){5}}
\multiput(300,10)(0,50){2}{\line(0,1){10}}   \multiput(353,10)(0,50){2}{\line(0,1){10}} 
\end{picture} 
\end{center}
\begin{center}
\begin{picture}(360,80)     
\multiput(20,40)(180,0){2}{\put(7,-3){$\bullet $}\put(27,-3){$\bullet $}\put(67,-3){$\bullet $}
         \put(-17,0){\line(1,0){90}}\put(-5,-20){$0$}\put(15,-20){$1$}\put(58,-20){$3$}} 
\multiput(117,7)(180,0){2}{\put(0,0){$\bullet$}\put(23,0){$\bullet$}\put(53,0){$\bullet$}} 
\multiput(117,67)(180,0){2}{\put(0,0){$\bullet$}\put(23,0){$\bullet$}\put(53,0){$\bullet$}} 
\multiput(56,40)(13,0){2}{\circle{22}}        \put(235,38){\oval(22,22)[b]}
\multiput(210,38)(0,4){2}{\line(1,0){15}}     \put(235,42){\oval(22,22)[t]}
\put(120,60){\line(2,-1){42}}                 \put(173,60){\line(-1,-1){15}}
\put(120,20){\line(2,1){18}}                  \put(145,30){\line(2,1){10}} 
\put(173,20){\line(-1,1){19}}                 \put(340,40){\line(-2,-1){10}} 
\put(143,70){\line(0,-1){15}}                 \put(143,10){\line(0,1){35}}               
\put(300,20){\line(2,1){20}}                  \put(300,60){\line(2,-1){40}}
\put(353,60){\line(-2,-1){23}}                \put(353,20){\line(-2,1){40}}
\put(323,10){\line(0,1){23}}                  \put(323,37){\line(0,1){10}}
\multiput(246,38)(0,4){2}{\line(1,0){23}}     \put(323,70){\line(0,-1){17}}
\multiput(120,10)(0,50){2}{\line(0,1){10}}    \multiput(173,10)(0,50){2}{\line(0,1){10}}
\multiput(300,10)(0,50){2}{\line(0,1){10}}    \multiput(353,10)(0,50){2}{\line(0,1){10}}
\put(0,65){$\alpha_{-1}$}                     \put(197,65){$\alpha_{-2}$} 
\put(215,38){\vector(1,0){5}}                 \put(235,26){\vector(1,0){5}}
\put(220,42){\vector(-1,0){5}}                \put(255,38){\vector(1,0){5}}
\put(260,42){\vector(-1,0){5}}                \put(235,54){\vector(-1,0){5}}
\put(58,28){\vector(1,0){5}}                  \put(69,52){\vector(-1,0){5}}
\put(35,37){\vector(1,0){5}}                  \put(87,43){\vector(-1,0){5}}
\end{picture} 
\end{center}
The pictures of $\alpha_0$ and of $\alpha_{-\frac{1}{2}}$ are similar with those of $\alpha_1$
and $\alpha_{-1}$ respectively. From these pictures we find the relations
$$ \begin{array}{l} 
   p_*\tilde{\textit{\j}}_*(\beta)=x_2x_1x_2^2x_1x_2=\Delta_3^2=x_1^2(x_2x_1^2x_2^{-1})x_2^2=
                                                                      p_*(A_{12}A_{13}A_{23})  \\
   \tilde{\textit{\j}}_*(\alpha_1)=\tilde{\textit{\j}}_*(\alpha_0)=A_{12}                      \\
   p_*\tilde{\textit{\j}}_*(\alpha_{-1})=   p_*\tilde{\textit{\j}}_*(\alpha_{-\frac{1}   
   {2}})=x_1x_2^2x_1=x_1x_2\Delta_3x_2^{-1}=x_1\Delta_3x_1x_2^{-1}=p_*(A_{12}A_{13})           \\
   p_*\tilde{\textit{\j}}_*(\alpha_{-2})=x_1x_2x_1^2x_2x_1=\Delta_3^2=p_*(A_{12}A_{13}A_{23}), 
\end{array}     $$
where $p_*$ is injective, therefore we get the values of $\tilde{\textit{\j}}_*(\beta)$ and
$\tilde{\textit{\j}}_*(\alpha_h)$.                                            

$\bf{Q\mathcal{B}_3.}$ The corresponding base point in $Q\mathcal{C}_3$ (and in $\mathcal{C}_3$)
is $\{0,1,3\}$, that is the polynomial $4X(X^2-4X+3)$.
The group $Q\mathcal{B}_3$ is generated by $\alpha_h$, $\beta$, and also by two new braids
$\gamma_1$, $\gamma_2$ (going to the generators $(12)$, $(23)$ of $\Sigma_3$):
$$ \gamma_1(t)=\left\lbrace\frac{1}{2}-\frac{1}{2}e^{\pi it},\frac{1}{2}+\frac{1}{2}e^{\pi it},3
                                  \right\rbrace,\,\gamma_2(t)=\{0,2-e^{\pi it},2+e^{\pi it}\}, $$
or equivalently 
$$ \gamma_1(t)=4(X-3)\left(X^2-X+\frac{1}{4}-\frac{1}{4}e^{2\pi it}\right),\,
                                       \gamma_2(t)=4X(X^2-4X+4-e^{2\pi it}). $$                                  
\begin{center}
\begin{picture}(360,60)     
\multiput(20,30)(180,0){2}{\put(0,-3){$\bullet $}\put(20,-3){$\bullet $}\put(60,-3){$\bullet $}
         \put(-17,0){\line(1,0){90}}\put(0,-20){$0$}\put(20,-20){$1$}\put(60,-20){$3$}} 
\multiput(117,7)(180,0){2}{\put(0,0){$\bullet$}\put(23,0){$\bullet$}\put(53,0){$\bullet$}} 
\multiput(117,47)(180,0){2}{\put(0,0){$\bullet$}\put(23,0){$\bullet$}\put(53,0){$\bullet$}} 
\put(243,30){\circle{40}}
\multiput(173,10)(127,0){2}{\line(0,1){40}}    \put(34,30){\circle{20}}
\put(122,50){\line(1,-2){20}}                  \put(323,50){\line(3,-4){30}}
\put(121,10){\line(1,2){8}}                    \put(323,10){\line(3,4){13}} 
\put(142,50){\line(-1,-2){8}}                  \put(353,50){\line(-3,-4){13}} 
\put(0,50){$\gamma_1$}                         \put(197,50){$\gamma_2$} 
\put(243,10){\vector(1,0){5}}                  \put(243,50){\vector(-1,0){5}}
\put(35,20){\vector(1,0){5}}                   \put(35,40){\vector(-1,0){5}}
\end{picture} 
\end{center}
It is clear that $j_*(\gamma_k)=x_k$, $k=1,2$.  Part of the proof of Theorem \ref{thm41}
are given, with full details, in the following lemma.
\begin{lemma}\label{lema41}
In $Q\mathcal{B}_3$ there are the relations
$$ \gamma_2\alpha_1\gamma^{-1}_2=\alpha_0^{-1}\alpha_{-\frac{1}{2}},\,
   \gamma_1\gamma_2\gamma_1\gamma_2^{-1}\gamma_1^{-1}\gamma_2^{-1}=\alpha_{-2}\beta^{-1}. $$
\end{lemma}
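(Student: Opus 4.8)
The plan is to verify both identities inside $Q\mathcal{P}_3$, using that each side is a \emph{pure} braid. For the first relation $\gamma_2$ covers the transposition $(23)\in\Sigma_3$ and $\alpha_1$ is pure, so $\gamma_2\alpha_1\gamma_2^{-1}$ covers $(23)(23)=\mathrm{id}$, while the right-hand side is visibly pure. For the second, the word $\gamma_1\gamma_2\gamma_1\gamma_2^{-1}\gamma_1^{-1}\gamma_2^{-1}$ covers $(s_1s_2)^3=\mathrm{id}$ (with $s_1=(12)$, $s_2=(23)$), hence also lies in $Q\mathcal{P}_3=\pi_1(Q\mathcal{F}_3)$. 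I would then work in the product model $Q\mathcal{F}_3\cong(\mathbb{C}\setminus H)\times\mathbb{C}^*\times\mathbb{C}$ of Lemma \ref{lema40}, writing $u=X/Y$ for the first coordinate, so that $\alpha_h$ is a loop around $u=h$ and $\beta$ generates the $\mathbb{C}^*$-factor (the winding of $Y$ about $0$, i.e.\ the loop around the missing point $u=\infty$).

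The engine is the covering-space monodromy: conjugation by $\gamma_k$ is $x\mapsto\tilde\gamma_k\,(g_k)_*(x)\,\tilde\gamma_k^{-1}$, where $g_k$ is the deck transformation covered by $\gamma_k$ and $\tilde\gamma_k$ is the lift joining base points. Substituting in $\Phi$ shows that $(12)$ acts by $u\mapsto 1/u$, $Y\mapsto uY$, and $(23)$ by $u\mapsto -u/(u+1)$, $Y\mapsto -(u+1)Y$; these are the Möbius maps permuting the six punctures $H\cup\{\infty\}$, namely $(0\,\infty)(-\tfrac12\,-2)$ and $(1\,-\tfrac12)(-1\,\infty)$. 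The crucial point to watch is that these maps move the puncture at infinity, so they mix the finite loops $\alpha_h$ with $\beta$: whenever a transposition carries a finite puncture across $\infty$, the $Y$-coordinate acquires an extra winding about $0$, contributing a factor $\beta^{\pm1}$.

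For the first relation I would track $\alpha_1$ under $(23)$: since $1\mapsto-\tfrac12$ without involving $\infty$, the image is a simple loop around $u=-\tfrac12$ with no $\beta$-contribution. Reading off the defining circles $a_h$ shows each $\alpha_h$ encircles $h$ \emph{together with all punctures to its right up to $1$} (so, in small-loop generators $\ell_p$, $\alpha_1=\ell_1$, $\alpha_0\sim\ell_0\ell_1$, $\alpha_{-1/2}\sim\ell_{-1/2}\ell_0\ell_1$, and so on); expressing the conjugated simple loop around $-\tfrac12$ in this nested basis yields exactly $\alpha_0^{-1}\alpha_{-1/2}$. This is confirmed on $H_1(Q\mathcal{F}_3)$, where $(23)_*(\alpha_1)=\alpha_{-1/2}-\alpha_0$ matches the abelianisation of the target word.

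For the second relation I would compute the class of $c=\gamma_1\gamma_2\gamma_1\gamma_2^{-1}\gamma_1^{-1}\gamma_2^{-1}$ by lifting the loop and reading off windings. First one checks that $c$ is genuinely new: $j_*(c)=x_1x_2x_1x_2^{-1}x_1^{-1}x_2^{-1}=\mathrm{id}$ by the braid relation, consistent with $\tilde{\textit{\j}}_*(\alpha_{-2}\beta^{-1})=\mathrm{id}$, so the equality must be pinned down by a finer invariant than the image in $\mathcal{B}_3$. Tracking the lift, the $u$-coordinate sweeps once around all of $H$, giving (up to orientation) the big loop $\alpha_{-2}$, while the $\mathbb{C}^*$-factor picks up net winding $-1$ from the steps in which $(12),(23)$ drag a finite puncture across $\infty$, together producing $\alpha_{-2}\beta^{-1}$. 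I expect the hardest part to be this word-level bookkeeping rather than the homological skeleton: fixing the exact conjugating factors and cyclic positions (the prefactor $\alpha_0^{-1}$, and the ordering/signs of $\alpha_{-2}$ versus $\beta$) and disentangling the $\alpha$–$\beta$ mixing caused by the moving point at infinity. The cleanest rigorous realisation is probably the explicit-isotopy method of Lemma \ref{lema31}: exhibit families of cubics with nowhere-vanishing discriminant (and nowhere-vanishing discriminant of the derivative) realising $\gamma_2*\alpha_1\simeq(\alpha_0^{-1}\alpha_{-1/2})*\gamma_2$ and the analogue for $c$, with the monodromy computation above serving to predict the correct target words and signs.
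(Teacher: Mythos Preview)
Your conceptual framework is sound and your M\"obius computations are correct: the deck transformations $(12)$ and $(23)$ do act on the $(u,Y)$--coordinates by $u\mapsto 1/u,\,Y\mapsto uY$ and $u\mapsto -u/(u+1),\,Y\mapsto -(u+1)Y$, permuting $H\cup\{\infty\}$ as you say. But the proposal stops short of a proof precisely at the step that carries all the content. You check the identities only in $H_1(Q\mathcal{F}_3)$ and on the level of ``which puncture is encircled''; you then write that the hardest part is ``fixing the exact conjugating factors and cyclic positions'' and that the cleanest rigorous realisation is ``probably the explicit-isotopy method''. That \emph{is} the lemma. Since $Q\mathcal{P}_3\cong F(5)\times F(1)$ is highly non-abelian, the abelianisation check and the permutation of punctures do not pin down the word: many distinct conjugates of $\ell_{-1/2}$ share the same image in $H_1$, and the path correction $\tilde\gamma_k\,(\cdot)\,\tilde\gamma_k^{-1}$ you mention---which moves the base point from $(4,1,4)$ to its image under the deck map and back---is exactly what decides between $\alpha_0^{-1}\alpha_{-1/2}$, $\alpha_{-1/2}\alpha_0^{-1}$, or some other conjugate. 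For the second relation your argument is purely homological (``sweeps once around all of $H$'', ``net winding $-1$''), which again cannot distinguish $\alpha_{-2}\beta^{-1}$ from, say, $\beta^{-1}\alpha_{-2}$ or a conjugate thereof.

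The paper carries out exactly the explicit computation you allude to but do not perform. It writes down the lift of $\gamma_2\alpha_1\gamma_2^{-1}$ to $Q\mathcal{F}_3$ as a concrete piecewise path in $\mathbb{C}^3$ (five arcs), pushes it through $\Psi\circ\Phi$ to obtain an explicit loop in $(\mathbb{C}\setminus H)\times\mathbb{C}^*$, records the over/under data at the real crossings (by evaluating at sample parameters), and then reads off from the resulting picture the identity $\widehat{\gamma_2\alpha_1\gamma_2^{-1}}\cdot\widehat{\alpha_{-1/2}^{-1}}=\widehat{\alpha_0^{-1}}$ in $\pi_1(T)$. The second relation is handled the same way with a six-arc lift. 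Your monodromy viewpoint is a good heuristic for \emph{predicting} the answer, but to turn it into a proof you must either do that explicit tracking, or else rigorously compute the path $\tilde\gamma_k$ in the $(u,Y)$ coordinates and perform the free-group conjugation---neither of which appears in the proposal.
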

\begin{proof}
The following lift of $\gamma_2\alpha_1\gamma^{-1}_2$ in $Q\mathcal{P}_3$:
$$ \gamma_2\alpha_1\gamma^{-1}_2(t)= \begin{cases}
    (0,2-e^{3\pi it},2+e^{3\pi it})         & \mbox{ if }t\in[0,\frac{1}{3}],            \\
    (8t-\frac{8}{3},\frac{17}{3}-8t,1)      & \mbox{ if }t\in[\frac{1}{3},\frac{4}{9}],  \\
    (1-\frac{1}{9}e^{18\pi it},2+\frac{1}{9}e^{18\pi it},1) 
                                            & \mbox{ if }t\in[\frac{4}{9},\frac{5}{9}],  \\
    (\frac{16}{3}-8t,8t-\frac{7}{3},1)      & \mbox{ if }t\in[\frac{5}{9},\frac{2}{3}],  \\                                        
    (0,2+e^{-3\pi it},2-e^{-3\pi it})       & \mbox{ if }t\in[\frac{2}{3},1]             \\
\end{cases}        $$
is transformed through $\Psi_*\Phi_*$ into 
$$ \widehat{\gamma_2\alpha_1\gamma^{-1}_2}(t)= \begin{cases}
    (\frac{4}{-2+3e^{3\pi it}},-2+3e^{3\pi it},4)   & \mbox{ if }t\in[0,\frac{1}{3}],    \\
    (\frac{12-24t}{24t-13},24t-13,4)        & \mbox{ if }t\in[\frac{1}{3},\frac{4}{9}],  \\
    (\frac{e^{18\pi it}+3}{-e^{18\pi it}-6},-2-\frac{1}{3}e^{18\pi it},4) 
                                            & \mbox{ if }t\in[\frac{4}{9},\frac{5}{9}],  \\
    (\frac{24t-12}{11-24t},11-24t,4)        & \mbox{ if }t\in[\frac{5}{9},\frac{2}{3}],  \\                                        
    (\frac{4}{-2-3e^{-3\pi it}},-2-3e^{-3\pi it},4) & \mbox{ if }t\in[\frac{2}{3},1].    \\
\end{cases}        $$
To draw the two components of $\widehat{\gamma_2\alpha_1\gamma^{-1}_2}$ we
have to see when $\widehat{\gamma_2\alpha_1\gamma^{-1}_2}(t)$ is real, and also to 
decide if the complex arcs are either over or under the real line; for this we compute, 
for instance, the values at $t=\frac{1}{6},\frac{17}{36},\frac{19}{36},\frac{5}{6}$.
The picture of the two components of $\widehat{\gamma_2\alpha_1\gamma^{-1}_2}$ 
\begin{center}
\begin{picture}(360,80)     
\multiput(77,37)(20,0){4}{$\bullet $}        \put(20,40){\line(1,0){190}} 
\multiput(107,37)(190,0){2}{$\bullet$}       \put(240,40){\line(1,0){150}} 
\multiput(197,37)(110,0){2}{$\circ$}         \put(219,37){$\times$} 
\multiput(138,5)(170,0){2}{$1$}              \put(89,5){$-1$} 
\multiput(118,5)(180,0){2}{$0$}              \put(67,5){$-2$} 
\put(198,5){$4$}                             \put(102,5){$-\frac{1}{2}$}
\put(280,40){\oval(60,60)[t]}                \put(110,40){\oval(180,40)[br]}
\put(110,40){\oval(12,40)[bl]}               \put(280,67){\vector(-1,0){5}}
\put(5,70){$\widehat{\gamma_2\alpha_1\gamma^{-1}_2}$}
\put(112,40){\circle{10}}                    \put(280,40){\circle{20}}
\put(112,35){\vector(1,0){5}}                \put(280,30){\vector(1,0){5}} 
\put(112,45){\vector(-1,0){5}}               \put(280,50){\vector(-1,0){5}}
\put(260,37){\vector(1,0){5}}                \put(265,43){\vector(-1,0){5}}
\put(145,23){\vector(-1,0){5}}               \put(140,17){\vector(1,0){5}}
\put(275,73){\vector(1,0){5}}                \put(240,5){$-5$} 
\end{picture} 
\end{center}
shows that $\widehat{\gamma_2\alpha_1\gamma^{-1}_2}
\widehat{\alpha_{-\frac{1}{2}}^{-1}}=\widehat{\alpha_0^{-1}}$, 
hence $\gamma_2\alpha_1\gamma^{-1}_2=\alpha_0^{-1}\alpha_{-\frac{1}{2}}$.

For the second relation we take the lift
$$ \gamma_1\gamma_2\gamma_1\gamma_2^{-1}\gamma_1^{-1}\gamma_2^{-1}(t)= \begin{cases}
    (\frac{1}{2}-\frac{1}{2}e^{6\pi it},\frac{1}{2}+\frac{1}{2}e^{6\pi it},3)
                                            & \mbox{ if }t\in[0,\frac{1}{6}],            \\
    (2+e^{6\pi it},0,2-e^{6\pi it})         & \mbox{ if }t\in[\frac{1}{6},\frac{1}{3}],  \\
    (3,\frac{1}{2}-\frac{1}{2}e^{6\pi it},\frac{1}{2}+\frac{1}{2}e^{6\pi it}) 
                                            & \mbox{ if }t\in[\frac{1}{3},\frac{1}{2}],  \\
    (2-e^{-6\pi it},2+e^{-6\pi it},0)       & \mbox{ if }t\in[\frac{1}{2},\frac{2}{3}],  \\                                        
    (\frac{1}{2}+\frac{1}{2}e^{-6\pi it},3,\frac{1}{2}-\frac{1}{2}e^{-6\pi it})
                                            & \mbox{ if }t\in[\frac{2}{3},\frac{5}{6}], \\
    (0,2-e^{-6\pi it},2+e^{-6\pi it})       & \mbox{ if }t\in[\frac{5}{6},1]             \\
\end{cases}        $$
and its image in $\pi_1(T)$
$$ \widehat{\gamma_1\gamma_2\gamma_1\gamma_2^{-1}\gamma_1^{-1}\gamma_2^{-1}}(t)= 
 \begin{cases}
    (\frac{5+3e^{6\pi it}}{5-3e^{6\pi it}},\frac{5}{2}-\frac{3}{2}e^{6\pi it},4)
                                      & \mbox{ if }t\in[0,\frac{1}{6}],            \\
    (-\frac{1}{2}-\frac{3}{4}e^{6\pi it},4,4)  
                                      & \mbox{ if }t\in[\frac{1}{6},\frac{1}{3}],  \\
    (\frac{-10}{5+3e^{6\pi it}},\frac{5}{2}+\frac{3}{2}e^{6\pi it},4) 
                                      & \mbox{ if }t\in[\frac{1}{3},\frac{1}{2}],  \\
    (\frac{2-3e^{-6\pi it}}{2+3e^{-6\pi it}},-2-3e^{-6\pi it},4)
                                      & \mbox{ if }t\in[\frac{1}{2},\frac{2}{3}],  \\                                        
    (-\frac{1}{2}+\frac{3}{10}e^{-6\pi it},-5,4)
                                      & \mbox{ if }t\in[\frac{2}{3},\frac{5}{6}],  \\
    (\frac{4}{-2+3e^{-6\pi it}},-2+3e^{-6\pi it},4)
                                      & \mbox{ if }t\in[\frac{5}{6},1].             \\
\end{cases}        $$
The values at $t=\frac{1}{12},\frac{1}{4},\frac{5}{12},\frac{7}{12},\frac{5}{6},\frac{11}{12}$
give the under- and over-arcs in the picture of the components of 
$\Phi_*\Psi_*(\gamma_1\gamma_2\gamma_1\gamma_2^{-1}\gamma_1^{-1}\gamma_2^{-1})$ 
\begin{center}
\begin{picture}(360,80)      
\multiput(77,37)(20,0){4}{$\bullet $}        \put(15,40){\line(1,0){195}} 
\multiput(107,37)(190,0){2}{$\bullet$}       \put(240,40){\line(1,0){150}} 
\multiput(197,37)(110,0){2}{$\circ$}         \put(219,37){$\times$} 
\multiput(138,5)(170,0){2}{$1$}              \put(89,5){$-1$} 
\multiput(118,5)(180,0){2}{$0$}              \put(67,5){$-2$} 
\put(198,5){$4$}                             \put(102,5){$-\frac{1}{2}$}
\put(280,40){\oval(60,60)}                   \put(325,40){\oval(30,30)}
\put(130,40){\oval(140,40)[tr]}              \put(130,40){\oval(10,40)[tl]}
\put(110,40){\oval(30,30)[t]}                \put(110,40){\oval(10,10)[b]}
\put(90,40){\oval(10,40)[tr]}                \put(90,40){\oval(140,40)[tl]} 
\put(100,40){\oval(160,40)[bl]}              \put(100,40){\oval(30,40)[br]}
\put(120,40){\oval(30,40)[bl]}               \put(120,40){\oval(160,40)[br]} 
\put(5,70){$\widehat{\gamma_1\gamma_2\gamma_1\gamma_2^{-1}\gamma_1^{-1}\gamma_2^{-1}}$}
\put(280,10){\vector(-1,0){5}}               \put(325,25){\vector(1,0){5}}
\put(110,55){\vector(-1,0){5}}               \put(112,35){\vector(-1,0){5}} 
\put(150,60){\vector(-1,0){5}}               \put(150,20){\vector(1,0){5}}
\put(50,60){\vector(-1,0){5}}                \put(50,20){\vector(1,0){5}}
\put(240,5){$-5$} 
\put(17,5){$-5$}                              \put(335,5){$4$} 
\end{picture} 
\end{center}
From this we find that $\widehat{\gamma_1\gamma_2\gamma_1\gamma_2^{-1}\gamma_1^{-1}
\gamma_2^{-1}}=\hat{\alpha}_{-2}\hat{\beta}^{-1}$, therefore we have the relation
$\gamma_1\gamma_2\gamma_1\gamma_2^{-1}\gamma_1^{-1}\gamma_2^{-1}=\alpha_{-2}\beta^{-1}$.
\end{proof}
For the remaining relations in $Q\mathcal{B}_3$ we will use the same method: for
the other $\hat{\varepsilon}$'s we will list their pictures, we will write the corresponding 
$\varepsilon$'s relations, but we will omit all the long formulae. 

From the exact sequence
$$ 1\rightarrow Q\mathcal{P}_3\hookrightarrow Q\mathcal{B}_3\rightarrow\Sigma_3\rightarrow 1, $$
where $\Sigma_3$ is presented as
$$ \Sigma_3=\langle (12),(23)\mid (12)^2=1,(23)^2=1, (12)(23)(12)=(23)(12)(23)\rangle, $$
there is a presentation of $Q\mathcal{B}_3$ of the form
$$ Q\mathcal{B}_3=\left\langle \begin{array}{ccc}   
    \alpha_1,\alpha_0,\alpha_{-\frac{1}{2}}, & \mid & [\alpha_h,\beta]=1,      \\
       \alpha_{-1},\alpha_{-2},\beta,        & \mid &                               
       \gamma^{\pm 1}_k\alpha_h\gamma^{\mp 1}_k\in Q\mathcal{P}_3,
       \gamma^{\pm 1}_k\beta\gamma^{\mp 1}_k\in Q\mathcal{P}_3,                \\
    \gamma_1,\gamma_2                        & \mid &
       \gamma_k^2\in Q\mathcal{P}_3,\gamma_1\gamma_2\gamma_1\gamma_2^{-1}\gamma_1^{-1}
       \gamma_2^{-1}\in Q\mathcal{P}_3
\end{array}  \right\rangle,  $$ 
where $h\in\{1,0,-\frac{1}{2},-1,-2\}$ and $k\in\{1,2\}$.

From the picture
\begin{center}
\begin{picture}(360,80)   
\multiput(77,37)(20,0){4}{$\bullet $}        \put(15,40){\line(1,0){195}} 
\multiput(107,37)(190,0){2}{$\bullet$}       \put(240,40){\line(1,0){150}} 
\multiput(197,37)(110,0){2}{$\circ$}         \put(219,37){$\times$} 
\multiput(138,5)(170,0){2}{$1$}              \put(89,5){$-1$} 
\multiput(118,5)(180,0){2}{$0$}              \put(67,5){$-2$} 
\put(198,5){$4$}                             \put(102,5){$-\frac{1}{2}$}
\put(325,40){\oval(30,30)}                   \put(335,5){$4$} 
\put(130,40){\oval(140,40)[r]}               \put(130,40){\oval(10,40)[l]}
\put(5,60){$\widehat{\gamma_1^2}$}
\put(150,60){\vector(-1,0){5}}               \put(325,25){\vector(1,0){5}}
\end{picture} 
\end{center}
we find that $\gamma_1^2=\alpha_1$ and this implies 
$\gamma^{\pm 1}_1\alpha_1\gamma^{\mp 1}_1=\alpha_1$.

The relation $\gamma_2^2=\alpha_{-\frac{1}{2}}^{-1}\beta$ is given by the picture
\begin{center}
\begin{picture}(360,80)   
\multiput(77,37)(20,0){4}{$\bullet $}        \put(15,40){\line(1,0){195}} 
\multiput(107,37)(190,0){2}{$\bullet$}       \put(240,40){\line(1,0){150}} 
\multiput(197,37)(110,0){2}{$\circ$}         \put(219,37){$\times$} 
\multiput(138,5)(170,0){2}{$1$}              \put(89,5){$-1$} 
\multiput(118,5)(180,0){2}{$0$}              \put(67,5){$-2$} 
\put(198,5){$4$}                             \put(102,5){$-\frac{1}{2}$}
\put(280,40){\oval(60,60)}                   \put(240,5){$-5$} 
\put(110,40){\oval(180,40)[r]}               \put(110,40){\oval(10,40)[l]}
\put(5,60){$\widehat{\gamma_2^2}$}
\put(280,70){\vector(-1,0){5}}               \put(145,20){\vector(-1,0){5}} 
\end{picture} 
\end{center}

The following two pictures show that $[\gamma_1,\beta]=1$ and 
$[\gamma_2,\beta]=1$, hence $\beta$ is central in $Q\mathcal{B}_3$:
\begin{center}
\begin{picture}(360,80)    
\multiput(77,37)(20,0){4}{$\bullet $}        \put(15,40){\line(1,0){195}} 
\multiput(107,37)(190,0){2}{$\bullet$}       \put(240,40){\line(1,0){150}} 
\multiput(197,37)(110,0){2}{$\circ$}         \put(219,37){$\times$} 
\multiput(138,5)(170,0){2}{$1$}              \put(89,5){$-1$} 
\multiput(118,5)(180,0){2}{$0$}              \put(67,5){$-2$} 
\put(198,5){$4$}                             \put(102,5){$-\frac{1}{2}$}
\put(300,40){\oval(80,80)}                   \put(325,40){\oval(30,30)[b]}
\put(340,5){$4$}                             \put(300,80){\vector(-1,0){5}}
\put(130,40){\oval(140,40)[tr]}              \put(130,40){\oval(10,40)[tl]}
\put(5,60){$\widehat{\gamma_1\beta\gamma_1^{-1}}$}
\put(150,63){\vector(-1,0){5}}               \put(145,57){\vector(1,0){5}} 
\put(325,23){\vector(1,0){5}}                \put(330,27){\vector(-1,0){5}}
\put(245,5){$-4$}                             
\end{picture} 
\end{center}
\begin{center}
\begin{picture}(360,100)    
\multiput(77,47)(20,0){4}{$\bullet $}        \put(15,50){\line(1,0){195}} 
\multiput(107,47)(190,0){2}{$\bullet$}       \put(240,50){\line(1,0){150}} 
\multiput(197,47)(110,0){2}{$\circ$}         \put(219,47){$\times$} 
\multiput(138,5)(170,0){2}{$1$}              \put(89,5){$-1$} 
\multiput(118,5)(180,0){2}{$0$}              \put(67,5){$-2$} 
\put(198,5){$4$}                             \put(102,5){$-\frac{1}{2}$}
\put(300,50){\oval(100,100)}                 \put(280,50){\oval(60,60)[t]}   
\put(335,5){$4$}                             \put(300,0){\vector(1,0){5}}
\put(110,50){\oval(180,40)[br]}              \put(110,50){\oval(10,40)[bl]}
\put(5,60){$\widehat{\gamma_2\beta\gamma_2^{-1}}$}
\put(150,33){\vector(-1,0){5}}               \put(145,27){\vector(1,0){5}} 
\put(280,83){\vector(-1,0){5}}               \put(275,77){\vector(1,0){5}}
\put(235,5){$-5$}                            \put(352,5){$5$} 
\end{picture} 
\end{center}
From the picture
\begin{center}
\begin{picture}(360,80)    
\multiput(77,37)(20,0){4}{$\bullet $}        \put(15,40){\line(1,0){195}} 
\multiput(107,37)(190,0){2}{$\bullet$}       \put(240,40){\line(1,0){150}} 
\multiput(197,37)(110,0){2}{$\circ$}         \put(219,37){$\times$} 
\multiput(138,5)(170,0){2}{$1$}              \put(89,5){$-1$} 
\multiput(118,5)(180,0){2}{$0$}              \put(67,5){$-2$} 
\put(198,5){$4$}                             \put(102,5){$-\frac{1}{2}$}
\put(130,40){\oval(140,40)[tr]}              \put(130,40){\oval(10,40)[tl]}
\put(120,40){\oval(120,40)[l]}               \put(120,40){\oval(30,40)[r]}
\put(325,40){\oval(30,30)[b]}                \put(305,40){\oval(20,20)}
\put(340,5){$4$}                             \put(47,5){$-3$} 
\put(90,20){\vector(-1,0){5}}                \put(305,50){\vector(-1,0){5}}
\put(128,37){\vector(1,0){5}}                \put(133,43){\vector(-1,0){5}}
\put(5,60){$\widehat{\gamma_1\alpha_0\gamma_1^{-1}}$}
\put(150,63){\vector(-1,0){5}}               \put(145,57){\vector(1,0){5}} 
\put(325,23){\vector(1,0){5}}                \put(330,27){\vector(-1,0){5}}
\put(325,37){\vector(1,0){5}}                \put(330,43){\vector(-1,0){5}}
\put(245,5){$-4$}         
\end{picture} 
\end{center}
we get $\widehat{\alpha_1^{-1}}\widehat{\gamma_1\alpha_0\gamma_1^{-1}}=
\widehat{\alpha_{-2}^{-1}}\widehat{\beta}$, hence 
$\gamma_1\alpha_0\gamma_1^{-1}=\alpha_1\alpha_{-2}^{-1}\beta$, and from
\begin{center}
\begin{picture}(360,80)            
\multiput(77,37)(20,0){4}{$\bullet $}        \put(15,40){\line(1,0){195}} 
\multiput(107,37)(190,0){2}{$\bullet$}       \put(240,40){\line(1,0){150}} 
\multiput(197,37)(110,0){2}{$\circ$}         \put(219,37){$\times$} 
\multiput(138,5)(170,0){2}{$1$}              \put(89,5){$-1$} 
\multiput(118,5)(180,0){2}{$0$}              \put(67,5){$-2$} 
\put(198,5){$4$}                             \put(102,5){$-\frac{1}{2}$}
\put(130,40){\oval(140,40)[br]}              \put(130,40){\oval(10,40)[bl]}
\put(120,40){\oval(120,40)[l]}               \put(120,40){\oval(30,40)[r]}
\put(325,40){\oval(30,30)[t]}                \put(305,40){\oval(20,20)}
\put(340,5){$4$}                             \put(47,5){$-3$} 
\put(90,20){\vector(-1,0){5}}                \put(305,50){\vector(-1,0){5}}
\put(128,37){\vector(1,0){5}}                \put(133,43){\vector(-1,0){5}}
\put(5,60){$\widehat{\gamma_1^{-1}\alpha_0\gamma_1}$}
\put(150,23){\vector(-1,0){5}}               \put(145,17){\vector(1,0){5}} 
\put(325,58){\vector(1,0){5}}                \put(330,52){\vector(-1,0){5}}
\put(325,37){\vector(1,0){5}}                \put(330,43){\vector(-1,0){5}}
\put(245,5){$-4$}         
\end{picture} 
\end{center}
we get $\widehat{\gamma_1^{-1}\alpha_0\gamma_1}\widehat{\alpha_1^{-1}}=
\widehat{\alpha_{-2}^{-1}}\widehat{\beta}$, hence 
$\gamma_1^{-1}\alpha_0\gamma_1=\alpha_{-2}^{-1}\alpha_1\beta$.

The following picture
\begin{center}
\begin{picture}(360,80)    
\multiput(77,37)(20,0){4}{$\bullet $}        \put(15,40){\line(1,0){195}} 
\multiput(107,37)(190,0){2}{$\bullet$}       \put(240,40){\line(1,0){150}} 
\multiput(197,37)(110,0){2}{$\circ$}         \put(219,37){$\times$} 
\multiput(138,5)(170,0){2}{$1$}              \put(89,5){$-1$} 
\multiput(118,5)(180,0){2}{$0$}              \put(67,5){$-2$} 
\put(198,5){$4$}                             \put(102,5){$-\frac{1}{2}$}
\put(302,40){\oval(26,26)}                   \put(325,40){\oval(30,30)[b]}
\put(340,5){$4$}                             \put(302,53){\vector(-1,0){5}}
\put(140,40){\oval(120,40)[tr]}              \put(140,40){\oval(30,40)[tl]}
\put(120,40){\oval(30,40)[r]}                \put(120,40){\oval(80,40)[l]}
\put(5,60){$\widehat{\gamma_1\alpha_{-\frac{1}{2}}\gamma_1^{-1}}$}
\put(165,63){\vector(-1,0){5}}               \put(160,57){\vector(1,0){5}} 
\put(325,23){\vector(1,0){5}}                \put(330,27){\vector(-1,0){5}}  
\put(325,43){\vector(1,0){5}}                \put(330,37){\vector(-1,0){5}} 
\put(128,43){\vector(1,0){5}}                \put(133,37){\vector(-1,0){5}}  
\put(110,20){\vector(-1,0){5}}
\end{picture} 
\end{center}
\noindent shows that $\widehat{\alpha_1^{-1}}\widehat{\gamma_1\alpha_{-\frac{1}{2}}
\gamma_1^{-1}}=\widehat{\alpha_{-1}}\widehat{\beta}$, therefore 
$\gamma_1\alpha_{-\frac{1}{2}}\gamma_1^{-1}=\alpha_1\alpha_{-1}^{-1}\beta$.

We have sufficiently many relations to compute $\{\alpha_h\}$ as words in 
$\gamma_1,\gamma_2$ and $\beta$:
$$ \begin{array}{lll}
  \alpha_1=\gamma_1^2                                                                    &                                                            
  \alpha_{-2}=\gamma_1\gamma_2\gamma_1\gamma_2^{-1}\gamma_1^{-1}\gamma_2^{-1}\beta       &
   \alpha_{-\frac{1}{2}}=\gamma_2^{-2}\beta                                           \\
  \alpha_{-1}=\beta\gamma_1\alpha_{-\frac{1}{2}}^{-1}\gamma_1=\gamma_1\gamma_2^2\gamma_1 &
   \alpha_0=\alpha_{-\frac{1}{2}}\gamma_2\alpha_1^{-1}\gamma_2^{-1}=
                                   \gamma_2^{-1}\gamma_1^{-2}\gamma_2^{-1}\beta.         & 
\end{array}  $$
Using $\gamma_1\alpha_0\gamma_1^{-1}=\alpha_1\alpha_{-2}^{-1}\beta$ and 
$\gamma_1^{-1}\alpha_0\gamma_1=\alpha_{-2}^{-1}\alpha_1\beta$ we find the relations
$$ \beta=\gamma_1\gamma_2\gamma_1\gamma_2\gamma_1\gamma_2\mbox{ and }
   \beta=\gamma_2\gamma_1\gamma_2\gamma_1\gamma_2\gamma_1,   $$
therefore $Q\mathcal{B}_3$ is generated by $\gamma_1,\gamma_2$: 
$$ (*)\quad\quad \begin{array}{lll}
   \alpha_1=\gamma_1^2                                                   &           
   \alpha_0=\gamma_1\gamma_2\gamma_1\gamma_2\gamma_1^{-1}\gamma_2^{-1}   &           
   \alpha_{-\frac{1}{2}}=\gamma_1\gamma_2\gamma_1\gamma_2\gamma_1\gamma_2^{-1}      \\
   \alpha_{-1}=\gamma_1\gamma_2^2\gamma_1                                &           
   \alpha_{-2}=\gamma_1\gamma_2\gamma_1^2\gamma_2\gamma_1                &           
   \beta=\gamma_1\gamma_2\gamma_1\gamma_2\gamma_1\gamma_2.
\end{array}  $$
The relation $(\gamma_1\gamma_2)^3=(\gamma_2\gamma_1)^3$ is the unique relation because
the rest of defining relations in $Q\mathcal{B}_3$:
$$ \begin{array}{lll} 
\gamma_1^{-1}\alpha_{-\frac{1}{2}}\gamma_1=\alpha_{-1}^{-1}\alpha_1\beta    & \quad & 
        \gamma_2^{-1}\alpha_1\gamma_2=\alpha_{-\frac{1}{2}}\alpha_0^{-1}             \\
\gamma_2\alpha_0\gamma_2^{-1}=\alpha_1^{-1}\alpha_{-\frac{1}{2}}            & \quad &   
        \gamma_2^{-1}\alpha_0\gamma_2=\alpha_{-\frac{1}{2}}\alpha_1^{-1}             \\
\gamma_2\alpha_{-\frac{1}{2}}\gamma_2^{-1}=\alpha_{-\frac{1}{2}}            & \quad &   
       \gamma_2^{-1}\alpha_{-\frac{1}{2}}\gamma_2=\alpha_{-\frac{1}{2}}              \\        
\gamma_1\alpha_{-1}\gamma_1^{-1}=\alpha_1\alpha_{-\frac{1}{2}}^{-1}\beta    & \quad &   
       \gamma_1^{-1}\alpha_{-1}\gamma_1=\alpha_{-\frac{1}{2}}^{-1}\alpha_1\beta      \\          
\gamma_2\alpha_{-1}\gamma_2^{-1}=\alpha_{-2}^{-1}\alpha_{-\frac{1}{2}}\beta & \quad &   
       \gamma_2^{-1}\alpha_{-1}\gamma_2=\alpha_{-\frac{1}{2}}\alpha_{-2}^{-1}\beta   \\
\gamma_1\alpha_{-2}\gamma_1^{-1}=\alpha_1\alpha_0^{-1}\beta                 & \quad &   
       \gamma_1^{-1}\alpha_{-2}\gamma_1=\alpha_0^{-1}\alpha_1\beta                   \\
\gamma_2\alpha_{-2}\gamma_2^{-1}=\alpha_{-1}^{-1}\alpha_{-\frac{1}{2}}\beta & \quad &   
       \gamma_2^{-1}\alpha_{-2}\gamma_2=\alpha_{-\frac{1}{2}}\alpha_{-1}^{-1}\beta           
\end{array}  $$
can be checked using $(*)$ relations. 

In order to show that the initial long presentation (with eight generators) and the short
presentation (with two generators) are equivalent, we have to verify few relations; for 
instance, we check that $\beta$ is a central element, and for this it is enough to show that
$[\beta,\gamma_k]=1$:
$$ \begin{array}{l}  
  \gamma_1\beta=\gamma_1(\gamma_2\gamma_1\gamma_2\gamma_1\gamma_2\gamma_1)=
    (\gamma_1\gamma_2\gamma_1\gamma_2\gamma_1\gamma_2)\gamma_1=\beta\gamma_1   \\
  \gamma_2\beta=\gamma_2(\gamma_1\gamma_2\gamma_1\gamma_2\gamma_1\gamma_2)=
  (\gamma_2\gamma_1\gamma_2\gamma_1\gamma_2\gamma_1)\gamma_2=\beta\gamma_2. 
\end{array}     $$ 

It is clear that $j_*(\gamma_k)=x_k$. The $(*)$ relations give the values of $Qp_*$. 
\begin{corollary}\label{cor49}
For any three points in the plane and for any deformation 
$H^s(t)=\{h_1^s(t),h_2^s(t),h_3^s(t)\}$ between the braids $H^0=x_1x_2x_1$ and  
$H^1=x_2x_1x_2$, there is a pair $(s,t)$ when one point $h_k^s(t)$ is the midpoint 
of the corresponding points $h_i^s(t),h_j^s(t)$ on the other two threads. 
\end{corollary}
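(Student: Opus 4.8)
The plan is to recast the geometric statement as the assertion that the relation $\gamma_1\gamma_2\gamma_1=\gamma_2\gamma_1\gamma_2$ \emph{fails} in $Q\mathcal{B}_3$, and then to read this off the presentation obtained in Theorem \ref{thm41}. First I would note that ``$h_k^s(t)$ is the midpoint of $h_i^s(t)$ and $h_j^s(t)$'' means precisely that the configuration $H^s(t)$ lies on one of the hypersurfaces $S_{ij}\colon[X_i+X_j=2X_k]$ of Definition \ref{redef1}; by Corollary \ref{cor41} the complement of all these midpoint loci (among triples of distinct points) is exactly $Q\mathcal{C}_3$. Since the three threads of a braid stay distinct throughout any deformation, the only way for a deformation to meet no midpoint is to remain inside $Q\mathcal{C}_3$. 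Thus a midpoint-avoiding deformation is nothing but a based homotopy $H\colon I\times I\to Q\mathcal{C}_3$, rel endpoints, with $H(0,\cdot)=H^0$ and $H(1,\cdot)=H^1$.

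Next I would fix the representatives using the braids $\gamma_1,\gamma_2\in Q\mathcal{C}_3$ constructed above, which satisfy $j_*(\gamma_k)=x_k$ by Theorem \ref{thm41}. I realize $H^0=x_1x_2x_1$ and $H^1=x_2x_1x_2$ as the loops $\gamma_1\gamma_2\gamma_1$ and $\gamma_2\gamma_1\gamma_2$ in $Q\mathcal{C}_3$; under the inclusion-induced surjection $j_*\colon Q\mathcal{B}_3\twoheadrightarrow\mathcal{B}_3$ of Proposition \ref{propS} they project to the prescribed braids. Path-connectedness of $Q\mathcal{C}_3$ makes the choice of base triangle irrelevant, which is what lets the statement begin with ``any three points.'' Were a midpoint-avoiding deformation to exist, the previous paragraph would force $\gamma_1\gamma_2\gamma_1=\gamma_2\gamma_1\gamma_2$ in $\pi_1(Q\mathcal{C}_3)=Q\mathcal{B}_3$.

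To finish I would derive a contradiction from the presentation $Q\mathcal{B}_3=\langle\gamma_1,\gamma_2\mid(\gamma_1\gamma_2)^3=(\gamma_2\gamma_1)^3\rangle$ of Theorem \ref{thm41}. The abelianization map $Q\mathcal{B}_3\to\Z^2$ sending $\gamma_1\mapsto(1,0)$, $\gamma_2\mapsto(0,1)$ is well defined, since both sides of the defining relation map to $(3,3)$; under it $\gamma_1\gamma_2\gamma_1\mapsto(2,1)$ while $\gamma_2\gamma_1\gamma_2\mapsto(1,2)$, so these two elements are genuinely distinct in $Q\mathcal{B}_3$. This contradicts the forced equality, so no deformation avoids every midpoint; every deformation from $H^0$ to $H^1$ therefore attains a pair $(s,t)$ at which one point is the midpoint of the other two. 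I expect the group theory to be immediate, and the one point requiring care to be the reduction of the first paragraph: checking that for a homotopy \emph{through braids} the distinctness of threads is automatic, so that the midpoint locus is the sole obstruction to remaining in $Q\mathcal{C}_3$.
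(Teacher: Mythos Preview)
Your argument is correct and follows essentially the same route as the paper: a midpoint-avoiding deformation would force $\gamma_1\gamma_2\gamma_1=\gamma_2\gamma_1\gamma_2$ in $Q\mathcal{B}_3$, and this is ruled out by abelianizing. The paper phrases the last step as ``the abelianization would then be cyclic, contradicting $H_1(Q\mathcal{C}_3)=\mathbb{Z}\oplus\mathbb{Z}$,'' while you compute the images $(2,1)\neq(1,2)$ directly, but these are the same observation.
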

\begin{proof}
If there is deformation where the three points $\{h_1^s(t),h_2^s(t),h_3^s(t)\}$ make never 
an arithmetic progression, then in $Q\mathcal{B}_3$ we have the relation 
$\gamma_1\gamma_2\gamma_1=\gamma_2\gamma_1\gamma_2$: this implies that the abelianization 
of $Q\mathcal{B}_3$ is a cyclic group, but it is clear that 
$H_1(Q\mathcal{C}_3)=\mathbb{Z}\oplus\mathbb{Z}$. 
\end{proof} 
\noindent\textit{Proof of Corollary \ref{cor40}.} a) As a Garside group, $Q\mathcal{B}_3$ is torsion 
free (see \cite{Pa}). If $\eta$ is a central element in $Q\mathcal{B}_3$, then in the sequence
$$ 1\rightarrow Q\mathcal{P}_3\hookrightarrow Q\mathcal{B}_3
                                   \overset{\partial}{\rightarrow}\Sigma_3\rightarrow 1 $$
$\partial(\eta)$ is central in $\Sigma_3$, hence $\eta$ is central in ${\rm ker}(\partial)$, 
and from Theorem \ref{thm41}, the center of $Q\mathcal{P}_3$ is generated by 
$\beta=(\gamma_1\gamma_2)^3$, the Garside element $\bf{\Delta}$ of the monoid 
$Q\mathcal{B}_3^+$. Let us remark the relation between the two Garside elements:
 $j_*({\bf \Delta})=\Delta_3^2$. \hfill$\square$ 

We can now give a proof of a statement from the Introduction.
\begin{corollary}\label{cor50}
The spaces $R\mathcal{F}_3$, $R\mathcal{C}_3$, $Q\mathcal{F}_3$, $Q\mathcal{C}_3$,
$R\mathcal{F}_4$ and $R\mathcal{C}_4$ are $K(\pi,1)$ spaces.
\end{corollary}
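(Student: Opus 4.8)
The plan is to deduce all six asphericity statements from two standard facts. First, if $p\colon E\to B$ is a locally trivial fibration whose fibre $F$ and base $B$ are both connected $K(\pi,1)$ spaces, then $E$ is a $K(\pi,1)$: the long exact homotopy sequence $\cdots\to\pi_k(F)\to\pi_k(E)\to\pi_k(B)\to\cdots$ forces $\pi_k(E)=0$ for every $k\geq 2$. Second, a covering space of a $K(\pi,1)$ is again a $K(\pi,1)$, since a covering induces isomorphisms on $\pi_k$ for $k\geq 2$. With these two principles the corollary becomes a short bootstrap: the one place that carries genuine content is the asphericity of $Q\mathcal{F}_3$, and everything else is assembled mechanically from the fibrations of Theorem \ref{thm1} and the coverings of Definition \ref{redef1}.

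I would first settle $Q\mathcal{F}_3$. By Lemma \ref{lema40} the composite $\Psi\circ\Phi$ is a homeomorphism from $Q\mathcal{F}_3$ onto $T=(\mathbb{C}\setminus H)\times\mathbb{C}^{*}\times\mathbb{C}$, where $H$ consists of five points. Each factor is aspherical: $\mathbb{C}\setminus H$ is a five-punctured plane, homotopy equivalent to a wedge of five circles and hence a $K(F(5),1)$; $\mathbb{C}^{*}$ is a $K(\mathbb{Z},1)$; and $\mathbb{C}$ is contractible. A finite product of $K(\pi,1)$ spaces is a $K(\pi,1)$, so $T$, and therefore $Q\mathcal{F}_3$, is aspherical. (Equivalently one could note that $\mathbb{C}^2\setminus\widehat{\mathcal{A}}$ is a locally trivial $\mathbb{C}^{*}$-bundle over $\mathbb{CP}^1$ minus six points, both base and fibre being $K(\pi,1)$.) Since $Q\mathcal{F}_3\to Q\mathcal{C}_3$ is a free regular $\Sigma_3$-covering, the covering principle then gives that $Q\mathcal{C}_3$ is a $K(\pi,1)$ as well.

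The remaining four spaces follow from Theorem \ref{thm1}. For $n=3$ that theorem supplies a locally trivial fibration $D\colon R\mathcal{C}_3\to Q\mathcal{C}_2=\mathcal{C}_2$ with fibre $\mathbb{C}\setminus 2$; the base $\mathcal{C}_2$ is a $K(\mathbb{Z},1)$ and the fibre is a wedge of two circles, a $K(F(2),1)$, so $R\mathcal{C}_3$ is a $K(\pi,1)$. For $n=4$ the same theorem gives $D\colon R\mathcal{C}_4\to Q\mathcal{C}_3$ with fibre $\mathbb{C}\setminus 3$; here the base is the $K(\pi,1)$ just produced and the fibre is a $K(F(3),1)$, so $R\mathcal{C}_4$ is a $K(\pi,1)$. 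Finally $R\mathcal{F}_3=p^{-1}(R\mathcal{C}_3)$ and $R\mathcal{F}_4=p^{-1}(R\mathcal{C}_4)$ are the restrictions of the coverings $p\colon\mathcal{F}_n\to\mathcal{C}_n$ to these open subsets, hence are themselves free $\Sigma_n$-coverings of $R\mathcal{C}_3$ and $R\mathcal{C}_4$; by the covering principle they too are $K(\pi,1)$ spaces.

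Given Lemma \ref{lema40} and Theorem \ref{thm1}, there is essentially no obstacle left to overcome; the proof is a pure assembly from the fibration and covering principles. If I had to isolate the step that carries the geometric content, it would be the product decomposition $Q\mathcal{F}_3\cong(\mathbb{C}\setminus H)\times\mathbb{C}^{*}\times\mathbb{C}$ of Lemma \ref{lema40}, since this is precisely what makes the base $Q\mathcal{C}_3$ of the $n=4$ fibration aspherical. Without that decomposition the chain of deductions would terminate at $n=3$, and the asphericity of $R\mathcal{C}_4$ (and hence of $R\mathcal{F}_4$) would be unavailable.
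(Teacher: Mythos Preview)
Your proof is correct and follows essentially the same route as the paper: Lemma~\ref{lema40} gives the asphericity of $Q\mathcal{F}_3$, the $\Sigma_3$-covering passes this to $Q\mathcal{C}_3$, and the fibrations of Theorem~\ref{thm1} together with the $\Sigma_n$-coverings handle the four $R$-spaces. The only cosmetic difference is that the paper outsources the $n=3$ case to the opening paragraph of Section~\ref{sec3}, whereas you spell out that same fibration argument inline.
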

\begin{proof} $Q\mathcal{F}_3$ is a $K(F(5)\times F(1),1)$ space from Lemma \ref{lema40}. Using
the coverings
$$ \Sigma_3\hookrightarrow Q\mathcal{F}_3\twoheadrightarrow Q\mathcal{C}_3,\,  
   \Sigma_4\hookrightarrow Q\mathcal{F}_4\twoheadrightarrow Q\mathcal{C}_4 $$
and the fibration
$$ \mathbb{C}\setminus 3\hookrightarrow R\mathcal{C}_4\twoheadrightarrow Q\mathcal{C}_3 $$
we find that $Q\mathcal{C}_3$, $R\mathcal{C}_4$ and $R\mathcal{F}_4$ are $K(\pi,1)$ spaces. The 
proof for $R\mathcal{F}_3$ and $R\mathcal{C}_3$ was given at the beginning of Section \ref{sec3}.
\end{proof} 
 
$\bf{R\mathcal{B}_4.}$ We choose $\{\frac{2-\sqrt{7}}{3},2-\sqrt{3},\frac{2+\sqrt{7}}{3},2+\sqrt{3}\}$ 
(or $P_4(X)=X^4-\frac{16}{3}X^3+6X^2-\frac{1}{3}$) as base point in $R\mathcal{C}_4$ (its derivative, 
$Q_3(X)=4(X^3-4X^2+3X)$, is the base point in $Q\mathcal{C}_3$). 
The fiber over this point is $\mathbb{C}\setminus\{-\frac{5}{3},0,9\}$ and we choose
$-\frac{1}{3}$ as base point in this fiber and the following paths as generators of $F_3$, 
its fundamental group:
$$ \begin{array}{c}
        d_1(t)=\begin{cases} 
(1-3\varepsilon)t-\frac{1}{3}\quad\,\,\quad    & \mbox{ if }t\in[0,\frac{1}{3}],            \\
-\varepsilon e^{6\pi it}     \quad\,\,\quad    & \mbox{ if }t\in[\frac{1}{3},\frac{2}{3}],  \\
(3\varepsilon-1)t+(\frac{2}{3}-3\varepsilon)\quad\,\,\quad   
                                               & \mbox{ if }t\in[\frac{2}{3},1],            \\
              \end{cases}              \\ 
       d_2(t)=\begin{cases} 
(3\varepsilon-4)t-\frac{1}{3} \quad  \quad     & \mbox{ if }t\in[0,\frac{1}{3}],            \\
-\frac{5}{3}+\varepsilon e^{6\pi it}\quad\quad & \mbox{ if }t\in[\frac{1}{3},\frac{2}{3}],  \\
(4-3\varepsilon)t+(3\varepsilon-\frac{13}{3})\quad\quad 
                                               & \mbox{ if }t\in[\frac{2}{3},1],            \\
              \end{cases}              \\
      d_3(t)=\begin{cases} 
(4-12\varepsilon)t-\frac{1}{3}                 & \mbox{ if }t\in[0,\frac{1}{12}],           \\
\varepsilon ie^{6\pi it}                       & \mbox{ if }t\in[\frac{1}{12},\frac{1}{4}], \\
(108-24\varepsilon)t+(7\varepsilon-27)         & \mbox{ if }t\in[\frac{1}{4},\frac{1}{3}],  \\
9-\varepsilon e^{6\pi it}                      & \mbox{ if }t\in[\frac{1}{3},\frac{2}{3}],  \\
(24\varepsilon-108)t+(81-17\varepsilon)        & \mbox{ if }t\in[\frac{2}{3},\frac{3}{4}],  \\
\varepsilon ie^{-6\pi it}                      & \mbox{ if }t\in[\frac{3}{4},\frac{11}{12}],\\
(12\varepsilon-4)t+(\frac{11}{3}-12\varepsilon)& \mbox{ if }t\in[\frac{11}{12},1]           \\
              \end{cases}                
\end{array}     $$  
(one can take $0<\varepsilon<\frac{1}{1000}$). 
\begin{center}
\begin{picture}(360,60)        
\put(10,30){\line(1,0){130}}    \put(17,27){$\bullet $}       \put(47,27){$\bullet $} 
\put(117,27){$\bullet $}        \put(9,5){$-\frac{5}{3}$}     \put(29,5){$-\frac{1}{3}$} 
\put(49,5){$0$}                 \put(117,5){$9$}              \put(37,27){$\circ$}      
\put(10,45){$d_1$}              \put(41,26){\vector(1,0){5}} 
\put(50,30){\circle{10}}        \put(50,25){\vector(1,0){5}}  \put(50,25){\vector(1,0){5}}
\put(200,0){\put(10,45){$d_2$} 
\put(10,30){\line(1,0){130}}    \put(17,27){$\bullet $}       \put(47,27){$\bullet $} 
\put(117,27){$\bullet $}        \put(9,5){$-\frac{5}{3}$}     \put(29,5){$-\frac{1}{3}$} 
\put(49,5){$0$}                 \put(117,5){$9$}              \put(37,27){$\circ$}      
\put(20,30){\circle{10}}        \put(20,35){\vector(-1,0){5}} \put(33,33){\vector(-1,0){5}}} 
\end{picture}
\end{center}    
\begin{center}
\begin{picture}(360,60)        
\put(100,0){\put(49,5){$0$}
\put(10,30){\line(1,0){130}}    \put(17,27){$\bullet $}       \put(47,27){$\bullet $} 
\put(117,27){$\bullet $}        \put(9,5){$-\frac{5}{3}$}     \put(29,5){$-\frac{1}{3}$} 
\put(120,25){\vector(1,0){5}}   \put(117,5){$9$}              \put(37,27){$\circ$}      
\put(10,45){$d_3$}              \put(50,30){\oval(10,10)[b]}  \put(120,30){\circle{10}}        
\put(50,25){\vector(1,0){5}}    \put(53,21){\vector(-1,0){5}} \put(80,27){\vector(1,0){5}}  \put(85,33){\vector(-1,0){5}}}
\end{picture}
\end{center}

Let us introduce the braids 
$\Gamma_1,\Gamma_2$: 
$$ \begin{array}{l}
\Gamma_1(t)=X^4-\frac{16}{3}X^3+(\frac{13}{2}-\frac{1}{2}e^{2\pi it})X^2+
                                                      (3e^{2\pi it}-3)X-\frac{1}{3},  \\
\Gamma_2(t)=X^4-\frac{16}{3}X^3+(8-2e^{2\pi it})X^2-\frac{1}{3}.
\end{array}    $$
\begin{lemma}\label{lema42}
The polynomials $\Gamma_1(t)$ and $\Gamma_2(t)$ are loops in $R\mathcal{C}_4$ at the 
base point, lifts of loops $\gamma_1(t)$ and $\gamma_2(t)$. 
\end{lemma}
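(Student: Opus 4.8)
The plan is to check the three assertions separately: that $\Gamma_1,\Gamma_2$ are closed paths based at $P_4=X^4-\frac{16}{3}X^3+6X^2-\frac13$, that $D\Gamma_k=\gamma_k$, and that $\Gamma_k(t)$ stays inside $R\mathcal{C}_4$ for every $t$. The first two are bookkeeping; the third is the real content.

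First I would dispose of the loop and lift conditions. Since $e^{2\pi i\cdot 0}=e^{2\pi i\cdot 1}=1$, putting $t=0$ or $t=1$ into either formula collapses it to $X^4-\frac{16}{3}X^3+6X^2-\frac13=P_4$, so both $\Gamma_1$ and $\Gamma_2$ are closed paths at the base point. Differentiating in $X$ gives $\Gamma_1'(t)=4X^3-16X^2+(13-e^{2\pi it})X+3e^{2\pi it}-3$ and $\Gamma_2'(t)=4X^3-16X^2+(16-4e^{2\pi it})X$; expanding the product forms of $\gamma_1$ and $\gamma_2$ shows these agree with $\gamma_1(t)$ and $\gamma_2(t)$ coefficient by coefficient, so $D\Gamma_k=\gamma_k$ and in fact $\Gamma_k(t)=\int_0^X\gamma_k(t)\,ds-\frac13$. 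Because $\gamma_1,\gamma_2$ are loops in $Q\mathcal{C}_3$ by construction, it follows that $D(\Gamma_k(t))=\gamma_k(t)\in Q\mathcal{C}_3$ for every $t$.

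Since $R\mathcal{C}_4=\mathcal{C}_4\cap D^{-1}(Q\mathcal{C}_3)$, the one remaining — and decisive — point is that each $\Gamma_k(t)$ lies in $\mathcal{C}_4$, i.e. has four distinct roots. As $\Gamma_k=\int_0^X\gamma_k-\frac13$, a repeated root of $\Gamma_k(t)$ is precisely a common root of $\Gamma_k(t)$ and $\Gamma_k'(t)=\gamma_k(t)$, so it occurs exactly when the constant $\frac13$ equals the value at some root $\beta$ of $\gamma_k(t)$ of the primitive $\int_0^X\gamma_k(t)\,ds$. The roots of $\gamma_k(t)$ are explicit — $3$ and $\frac12\pm\frac12 e^{\pi it}$ for $k=1$, and $0$ and $2\pm e^{\pi it}$ for $k=2$ — so I would simply evaluate $\Gamma_k(t)$ at each. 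The root $0$ yields the constant term $-\frac13\neq0$, and the root $3$ yields $\frac92 e^{2\pi it}-\frac{83}{6}$, of modulus at least $\frac{83}{6}-\frac92=\frac{28}{3}>0$; both are harmless. Writing $u=e^{\pi it}$, the two surviving roots in each case reduce, up to a nonzero factor, to showing that the quartics $3u^4\pm16u^3+24u^2-15$ (for $\Gamma_2$) and $3u^4\mp40u^3-66u^2+39$ (for $\Gamma_1$) have no zero on the circle $\vert u\vert=1$.

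The hard part is exactly this unit-circle nonvanishing, and I would handle it without a full discriminant expansion. Each quartic has real coefficients, and $u\mapsto-u$ interchanges the two sign choices, so it suffices to treat one quartic per polynomial. A zero $u_0$ with $\vert u_0\vert=1$ and $u_0\neq\pm1$ comes paired with $\overline{u_0}=1/u_0$, forcing a real factor $u^2-su+1$ with $s=2\,\mathrm{Re}(u_0)\in[-2,2]$. Positing the factorization $(u^2-su+1)(3u^2+pu+q_0)$ and matching coefficients over-determines $s$: the constant and cubic terms fix $q_0$ and $p$ linearly in $s$, the linear term then forces a single value (namely $s=-\frac89$ for $3u^4+16u^3+24u^2-15$ and $s=-\frac{10}{9}$ for $3u^4-40u^3-66u^2+39$), and substituting that $s$ into the quadratic-coefficient equation gives a contradiction. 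This excludes interior zeros, while direct evaluation at $u=\pm1$ (all values nonzero, e.g. $28$ and $-4$) excludes the endpoints. Hence $\mathbf{D}_{\Gamma_k(t)}\neq0$ for all $t$, so $\Gamma_k(t)\in\mathcal{C}_4$; combined with $D(\Gamma_k(t))\in Q\mathcal{C}_3$ this gives $\Gamma_k(t)\in R\mathcal{C}_4$, and the lemma follows. The reciprocal-factor reduction is what keeps the main obstacle manageable.
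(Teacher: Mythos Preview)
Your proof is correct. The overall structure --- checking that $\Gamma_k$ are loops at the base point, that $D\Gamma_k=\gamma_k$, and that the only real issue is $\Gamma_k(\beta)\neq 0$ at the three explicit roots $\beta$ of $\gamma_k$ --- is exactly what the paper does, and your reduction to the quartics in $u=e^{\pi it}$ matches the paper's computations (after clearing denominators, the paper's polynomials $Y^4-\frac{40}{3}Y^3-22Y^2+13$ and $Y^4+\frac{16}{3}Y^3+8Y^2-5$ become your $3u^4\mp 40u^3-66u^2+39$ and $3u^4\pm 16u^3+24u^2-15$).

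The one genuine difference is in how you rule out a unit-circle root of these quartics. The paper argues by first locating the two real roots of each quartic by sign changes, and then invoking Vieta on the remaining complex-conjugate pair: for $\Gamma_1$ the product of the four roots forces $|y_3|^2>\frac{13}{10}$, and for $\Gamma_2$ the sum forces $\mathrm{Re}(y_3)<-\frac{5}{3}$, either way excluding $|y_3|=1$. Your route is to observe that any non-real unit-circle root would bring a reciprocal-self-conjugate factor $u^2-su+1$, and then to over-determine $s$ by coefficient matching; this is cleaner algebraically and avoids having to bracket real roots numerically. The paper's approach, on the other hand, actually tells you where all four roots sit, which is a bit more informative if one cared about that. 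Both arguments are short and elementary.
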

\begin{proof}
It is clear that $\Gamma_k(0)=\Gamma_k(1)=X^4-\frac{16}{3}X^3+6X^2-\frac{1}{3}$ and  
$D(\Gamma_k(t))$ is $\gamma_k(t)$, $k=1,2$. We will use the notation $E$ for 
$\pm e^{\pi it}$.

The roots of $\gamma_1(t)$ are $3,\frac{1}{2}\pm\frac{1}{2}e^{\pi it}$. We have
$$ \Gamma_1(3)=\frac{9}{2}E^2-\frac{83}{6}\neq 0,\,\Gamma_1\left(\frac{1}{2}+\frac{1}{2}
E\right)=-\frac{1}{16}E^4+\frac{5}{6}E^3+\frac{11}{8}E^2-\frac{13}{16}\neq 0 $$
(the polynomial $Y^4-\frac{40}{3}Y^3-22Y^2+13$ has two real roots, 
$y_1\in(\frac{1}{2},\frac{2}{3})$, $y_2\in(14,15)$ and two conjugate complex roots
$y_3,y_4$; using the product of the roots we find that $|y_3|^2>\frac{13}{10}$, so
$E$ cannot be a root of this polynomial).

The roots of $\gamma_2(t)$ are $0,2\pm e^{\pi it}$. We have
$$ \Gamma_2(0)=-\frac{1}{3}\neq 0,\,\Gamma_2(2+E)=-E^4-\frac{16}{3}E^3-8E^2+5\neq 0 $$
(the polynomial $Y^4+\frac{16}{3}Y^3+8Y^2-5$ has two real roots, 
$y_1\in(-2,-1)$, $y_2\in(0,1)$ and two conjugate complex roots
$y_3,y_4$; using the sum of the roots we find that ${\rm Re}(y_3)<-\frac{5}{3}$, 
therefore $|y_3|\neq 1$ and $E$ cannot be a root of this polynomial).
\end{proof}
The fibration
$$  \mathbb{C}\setminus\left\lbrace-\frac{5}{3},0,9\right\rbrace\hookrightarrow 
   R\mathcal{C}_4\underset{\underset{I}{\leftarrow}}{\longrightarrow}   Q\mathcal{C}_3 $$
shows that $R\mathcal{B}_4\cong F\langle d_1,d_2,d_3\rangle\rtimes Q\mathcal{B}_3$. The 
generators of $R\mathcal{C}_4$ are given by the braids
$\delta_k(t)=X^4-\frac{16}{3}X^3+6X^2+d_k(t)$, $k=1,2,3$, together with the braids 
$\Gamma_1,\Gamma_2$. 

Now we begin to analyse the trajectories $X_1(t),X_2(t),X_3(t),X_4(t)$ of the roots 
of polynomials $\delta_*(t)$ and $\Gamma_*(t)$. These are $4\times 5$ continuous functions
starting at $X_1(0)=\frac{2-\sqrt{7}}{3}$, $X_2(0)=2-\sqrt{3}$, 
$X_3(0)=\frac{2+\sqrt{7}}{3}$ and $X_4(0)=2+\sqrt{3}$.
Take $P(X)=X^4-\frac{16}{3}X^3+6X^2$. The polynomial 
$\delta_k(t)=P(X)+\theta_k(t)+i\eta_k(t)$ could have a real root only if $d_k(t)$ is real, 
that is if $t\in[0,\frac{1}{3}]\cup\{\frac{1}{2}\}\cup[\frac{2}{3},1]$ for $k=1,2$ and if
$t\in[0,\frac{1}{12}]\cup[\frac{1}{4},\frac{1}{3}]\cup\{\frac{1}{2}\}\cup[\frac{2}{3},
\frac{3}{4}]\cup[\frac{11}{12},1]$ for $k=3$; Rolle theorem gives the intervals where 
these real roots lie. From the table (the free term of the polynomial $P(X)+\theta$ 
appears in the first column and bold ${\bf 0}$ stands for a double root) one can see how 
the real roots are moving on the real line: 
\begin{center}
\begin{tabular}{c|c|c|c|c|c|c|c|c|c|c|c}
$\theta$ & $\frac{5-\sqrt{40}}{3}$ & $\frac{2-\sqrt{7}}{3}$   & $0$  & $2-\sqrt{3}$  & $1$                      
         & $\frac{2+\sqrt{7}}{3}$  & $\frac{8-\sqrt{10}}{3}$  & $3$                      
         & $\frac{8+\sqrt{10}}{3}$ & $2+\sqrt{3}$             & $\frac{5+\sqrt{40}}{3}$   \\
\hline
\hline 
$0$ & $\frac{5}{3}$     & $\frac{1}{3}$         &  ${\bf 0}$       & $\frac{1}{3}$ 
    & $\frac{5}{3}$     & $\frac{1}{3}$         &  $0$             & $-9$    
    & $0$               & $\frac{1}{3}$         &  $\frac{5}{3}$                          \\
\hline 
$-\frac{1}{3}$
    & $+$ & $0$  & $-$  & $0$ & $+$      & $0$  & $-$ & $-$        & $-$ & $0$ & $+$      \\
\hline
\hline 
$-\frac{5}{3}+\varepsilon$
    & $+$ & $-$  & $-$  & $-$ & $+$       & $-$  & $-$ & $-$       & $-$ & $-$ & $+$      \\
\hline
$-\frac{5}{3}$
    & $0$ & $-$  & $-$  & $-$ & ${\bf 0}$ & $-$  & $-$ & $-$       & $-$ & $-$ & $0$      \\
\hline
$-\frac{5}{3}-\varepsilon$
    & $-$ & $-$  & $-$  & $-$ & $-$       & $-$  & $-$ & $-$       & $-$ & $-$ & $-$      \\
\hline
\hline
$-\varepsilon$
    & $+$ & $+$  & $-$  & $+$ & $+$       & $+$  & $-$ & $-$       & $-$ & $+$ & $+$      \\
\hline
$\varepsilon$
    & $+$ & $+$  & $+$  & $+$ & $+$       & $+$  & $+$ & $-$       & $+$ & $+$ & $+$      \\
\hline
\hline
$9-\varepsilon$
    & $+$ & $+$  & $+$  & $+$ & $+$       & $+$  & $+$ & $-$       & $+$ & $+$ & $+$      \\       
\hline
$9$
    & $+$ & $+$  & $+$  & $+$ & $+$       & $+$  & $+$ & ${\bf 0}$ & $+$ & $+$ & $+$      \\ 
\hline
$9+\varepsilon$
    & $+$ & $+$  & $+$  & $+$ & $+$       & $+$  & $+$ & $+$   & $+$ & $+$ & $+$          \\ 
\hline  
\hline            
\end{tabular}
\end{center}
The position of real roots of $\delta_k(\frac{1}{3})$ will be used to find the signs of $b$'s, 
the imaginary parts of the roots when these roots leave the real line; these are given by
$$ {\rm Im}(P(a+bi)+\theta+i\eta)=bP'(a)+b^2Q(a,b)+\eta=0,   $$
where only signs of $P'(a)=4a(a-1)(a-3)$ and of $\eta=\pm\varepsilon\sin(6\pi it)$ are
important (near the real roots of $\delta_k(\frac{1}{3})$ the term $bQ(a,b)$ is close to $0$).
For instance, look at the root $X_1(t)$ of $\delta_1(t)$: for $t\in[o,\frac{1}{3}]$, $X_1(t)$
is real, where $X_1(\frac{1}{3})=a\in(\frac{2-\sqrt{7}}{3},0)$. For this $a$ we have
$P'(a)<0$ and for $t=\frac{1}{3}+$ we have $\eta=-\varepsilon\sin(6\pi it)<0$, therefore
${\rm Im}(X_1(\frac{1}{3}+))<0$, so $X_1(t)$ enters the half plane ${\rm Im}(z)<0$.
The results for the rest of 11 roots are contained in the following three pictures. 
\begin{lemma}\label{lema43}
a) The polynomials $\delta_1(t)$ have no root on the lines ${\rm Re}(z)=1$ and 
${\rm Re}(z)=3$.

\noindent b) The polynomials $\delta_2(t)$ have no root on the lines ${\rm Re}(z)=0$ and 
${\rm Re}(z)=3$.

\noindent c) The polynomials $\delta_3(t)$ have no root on the line ${\rm Re}(z)=1$.
\end{lemma}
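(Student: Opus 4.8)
The plan is to exploit that $P(X)=X^4-\tfrac{16}{3}X^3+6X^2$ has derivative $P'(X)=4X(X-1)(X-3)$, so that the three vertical lines in question, $\mathrm{Re}(z)\in\{0,1,3\}$, are precisely the lines through the critical points of $P$. I would suppose, for contradiction, that some $\delta_k(t)=P(X)+d_k(t)$ has a root $z=c+bi$ (with $b\in\R$) on the line $\mathrm{Re}(z)=c$, $c\in\{0,1,3\}$, and then split the vanishing equation $\delta_k(c+bi)=0$ into real and imaginary parts, writing $d_k(t)=\theta_k(t)+i\eta_k(t)$.

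The key computation is a Taylor expansion about the critical point. Since $P'(c)=0$ and $P^{(4)}\equiv24$, one gets
\[
P(c+bi)=\Bigl[P(c)-\tfrac{P''(c)}{2}b^2+b^4\Bigr]-i\,\tfrac{P'''(c)}{6}\,b^3 ,
\]
so a root on $\mathrm{Re}(z)=c$ forces
\[
\theta_k(t)=-P(c)+\tfrac{P''(c)}{2}b^2-b^4,\qquad \eta_k(t)=\tfrac{P'''(c)}{6}\,b^3 .
\]
Here I would record the numerics once: $P(0)=0$, $P(1)=\tfrac53$, $P(3)=-9$; $P''(0)=12$, $P''(1)=-8$, $P''(3)=24$; and crucially $P'''(0)=-32$, $P'''(1)=-8$, $P'''(3)=40$, all nonzero.

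Two kinds of contradiction then finish the cases. For $c=1$ (the cases $\delta_1$ and $\delta_3$) the real part alone suffices: as $P''(1)=-8<0$, we have $\mathrm{Re}\,P(1+bi)=\tfrac53+4b^2+b^4\ge\tfrac53$ for \emph{every} real $b$, hence $\theta_k(t)\le-\tfrac53$; but reading off the piecewise loops shows $\mathrm{Re}\,d_1,\mathrm{Re}\,d_3\ge-\tfrac13$, a contradiction with no reference to $b$. For $c=0$ (for $\delta_2$) and $c=3$ (for $\delta_1,\delta_2$) I would first use the imaginary equation: since $|\eta_k(t)|\le\varepsilon$ and $P'''(c)\ne0$, it yields $|b|^3\le 6\varepsilon/|P'''(c)|$, so $b$ is negligibly small; then $\mathrm{Re}\,P(c+bi)$ is within $O(\varepsilon^{2/3})$ of $P(c)$, forcing $\theta_k(t)\approx-P(c)$, i.e.\ $\approx 9$ when $c=3$ and $\approx 0$ when $c=0$. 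These values lie far outside the ranges $\mathrm{Re}\,d_1\in[-\tfrac13,\varepsilon]$ and $\mathrm{Re}\,d_2\in[-\tfrac53-\varepsilon,-\tfrac13]$, again a contradiction.

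The main bookkeeping obstacle is extracting the ranges of the real parts $\mathrm{Re}\,d_k(t)$ from the piecewise definitions of the three loops, and pairing each $\delta_k$ with exactly those lines whose critical value $-P(c)$ escapes the corresponding range; the loops $d_1,d_3$ genuinely wind near the other punctures, so those lines cannot be excluded this way, and the statement is sharp about which ones survive. Because $\varepsilon<\tfrac1{1000}$ while the decisive gaps are of order one ($\tfrac53$ versus $\tfrac13$, or $9$ versus $\varepsilon$), every estimate has large slack, and the only care needed is in the small-$b$ step, where one checks that the $b^2$ and $b^4$ corrections cannot bridge those gaps.
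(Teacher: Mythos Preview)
Your argument is correct and follows essentially the same approach as the paper: split $\delta_k(c+bi)=0$ into real and imaginary parts at $c\in\{0,1,3\}$, use $|\eta_k|\le\varepsilon$ to force $|b|$ small, and then check that the resulting value of $\theta_k$ lies outside the actual range of $\mathrm{Re}\,d_k$. Your Taylor-expansion packaging is a tidy way to obtain the same two equations the paper writes out directly, and your observation that for $c=1$ the real part $P(1)+4b^2+b^4\ge\tfrac53$ already gives the contradiction for \emph{all} $b$ (so the imaginary equation is unnecessary there) is a small but genuine simplification over the paper, which invokes the small-$b$ step uniformly.
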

\begin{proof}
Real and imaginary parts of the equation $\delta_k(X)(a+bi)=0$ give
$$ \begin{array}{ccc}
a=0:      &      a=1:    &  a=3:                               \\
\left\lbrace  \begin{array}{l} b^4-6b^2+\theta=0               \\
                               \frac{16}{3}b^3+\eta=0     \end{array} \right. &
\left\lbrace  \begin{array}{l} b^4+4b^2+\frac{5}{3}+\theta=0  \\
                               \frac{4}{3}b^3+\eta=0      \end{array} \right. &                                    
\left\lbrace  \begin{array}{l} b^4-12b^2-9+\theta=0            \\
                               -\frac{20}{3}b^3+\eta=0   \end{array} \right.
\end{array}  $$
In all these three cases and for any $k=1,2,3$, we have $|\eta|\leq\varepsilon$, hence
$|b|$ is small.

a) For $k=1$ we have  $-\frac{1}{3}\leq\theta\leq\varepsilon$, and the first equation in 
the system for $a=1$ and $a=3$ cannot have small solutions.

b) For $k=2$ we have  $\theta\leq-\frac{1}{3}$, hence the solutions
of the first equation for $a=0$ and $a=3$ cannot be too small.

c) For $k=3$ we have $\theta\geq-\frac{1}{3}$, 
therefore the first equation for $a=1$ cannot have small solutions.
\end{proof}
Using these separating lines, the real roots of $\delta_k(t)$, the signs of imaginary parts 
of the complex roots, and also the symmetry $\delta_k(t)=\overline{\delta_k(1-t)}$ (this
implies that the roots of $\delta_k(t)$ for $t\in[\frac{1}{2},1]$ are the conjugates of the
roots of $\delta_k(1-t)$), we obtain the pictures
\begin{center}
\begin{picture}(360,60)        
\put(10,30){\line(1,0){130}}    \put(17,27){$\bullet $}       \put(47,27){$\bullet $} 
\put(117,27){$\bullet $}        \put(9,5){$-\frac{5}{3}$}     \put(29,5){$-\frac{1}{3}$} 
\put(49,5){$0$}                 \put(117,5){$9$}              \put(37,27){$\circ$}      
\put(160,30){\line(1,0){200}}   \put(10,45){$d_1$}            \put(160,45){$\delta_1$}
\put(187,27){$\bullet $}        \put(217,27){$\bullet $}      \put(257,27){$\bullet $} 
\put(327,27){$\bullet $}        \put(205,5){$2-\sqrt{3}$}     \put(41,26){\vector(1,0){5}}  
\put(315,5){$2+\sqrt{3}$}       \put(197,50){$0$}             \put(237,50){$1$}
\put(297,50){$3$}               \put(205,27){\line(0,1){6}}   \put(240,27){\line(0,1){6}}
\put(300,27){\line(0,1){6}}     \put(50,30){\circle{10}}      \put(50,25){\vector(1,0){5}} 
\multiput(188,28)(0,4){2}{\line(1,0){5}}   \put(205,32){\oval(16,16)[t]}  
\multiput(213,28)(0,4){2}{\line(1,0){5}}   \put(205,28){\oval(16,16)[b]}     
\put(205,20){\vector(1,0){5}}   \put(280,30){\circle{16}}       
\put(280,22){\vector(1,0){5}}   \put(50,25){\vector(1,0){5}}  \put(312,38){\vector(-1,0){5}}                     
\put(265,27){\vector(1,0){5}}   \put(270,33){\vector(-1,0){5}}\put(325,33){\vector(-1,0){5}} 
\put(205,40){\vector(-1,0){5}}  \put(323,27){\vector(1,0){5}} \put(193,28){\vector(1,0){5}}
\put(213,28){\vector(1,0){5}}   \put(197,32){\vector(-1,0){6}}\put(217,32){\vector(-1,0){5}}
\multiput(238,10)(0,3){12}{$\cdot$}        \put(175,5){$\frac{2-\sqrt{7}}{3}$}
\multiput(298,10)(0,3){12}{$\cdot$}        \put(250,5){$\frac{2+\sqrt{7}}{3}$}
\put(312,30){\circle{16}} 
\end{picture}
\end{center}
\begin{center}
\begin{picture}(360,60)        
\put(10,30){\line(1,0){130}}    \put(17,27){$\bullet $}       \put(47,27){$\bullet $} 
\put(117,27){$\bullet $}        \put(9,5){$-\frac{5}{3}$}     \put(29,5){$-\frac{1}{3}$} 
\put(49,5){$0$}                 \put(117,5){$9$}              \put(37,27){$\circ$}      
\put(160,30){\line(1,0){200}}   \put(10,45){$d_2$}            \put(160,45){$\delta_2$}
\put(187,27){$\bullet $}        \put(217,27){$\bullet $}      \put(257,27){$\bullet $} 
\put(327,27){$\bullet $}        \put(205,5){$2-\sqrt{3}$}     \put(20,30){\circle{10}}
\put(20,35){\vector(-1,0){5}}   \put(33,33){\vector(-1,0){5}} \put(175,30){\circle{16}} 
\put(175,38){\vector(-1,0){5}}  \put(187,33){\vector(-1,0){5}}\put(183,27){\vector(1,0){5}}
\put(240,32){\oval(16,16)[t]}   \put(240,28){\oval(16,16)[b]}    
\multiput(219,28)(0,4){2}{\line(1,0){13}}                     \put(227,32){\vector(-1,0){5}}  
\multiput(248,28)(0,4){2}{\line(1,0){12}}                     \put(255,32){\vector(-1,0){5}}
\put(240,20){\vector(1,0){5}}   \put(240,40){\vector(-1,0){5}}\put(225,28){\vector(1,0){5}}
\put(253,28){\vector(1,0){5}}   \put(335,33){\vector(-1,0){5}}\put(300,27){\line(0,1){6}} 
\put(345,30){\circle{16}}       \put(345,22){\vector(1,0){5}} \put(332,27){\vector(1,0){5}}
\put(315,5){$2+\sqrt{3}$}       \put(197,50){$0$}             \put(237,50){$1$}
\put(297,50){$3$}               \put(200,27){\line(0,1){6}}   \put(240,27){\line(0,1){6}}
\multiput(198,10)(0,3){12}{$\cdot$}        \put(175,5){$\frac{2-\sqrt{7}}{3}$}
\multiput(298,10)(0,3){12}{$\cdot$}        \put(250,5){$\frac{2+\sqrt{7}}{3}$}     
\end{picture}
\end{center}
\begin{center}
\begin{picture}(360,60)        
\put(10,30){\line(1,0){130}}    \put(17,27){$\bullet $}       \put(47,27){$\bullet $} 
\put(117,27){$\bullet $}        \put(9,5){$-\frac{5}{3}$}     \put(29,5){$-\frac{1}{3}$} 
\put(49,5){$0$}                 \put(117,5){$9$}              \put(37,27){$\circ$}      
\put(160,30){\line(1,0){200}}   \put(10,45){$d_3$}            \put(160,45){$\delta_3$}
\put(187,27){$\bullet $}        \put(217,27){$\bullet $}      \put(257,27){$\bullet $} 
\put(327,27){$\bullet $}        \put(147,5){$\frac{2-\sqrt{7}}{3}$} 
\put(205,5){$2-\sqrt{3}$}       \put(250,5){$\frac{2+\sqrt{7}}{3}$} 
\put(315,5){$2+\sqrt{3}$}       \put(203,50){$0$}             \put(237,50){$1$}
\put(297,50){$3$}               \put(200,27){\line(0,1){6}}   \put(240,27){\line(0,1){6}}
\put(300,27){\line(0,1){6}}     \put(50,30){\oval(10,10)[b]}  \put(277,30){\oval(15,16)[b]}
\put(277,28){\oval(17,16)[b]}   \put(260,28){\line(1,0){10}}  \put(285,28){\line(1,0){6}}
\put(318,30){\oval(17,16)[t]}   \put(318,28){\oval(15,16)[t]} \put(120,30){\circle{10}}     
\put(299,30){\oval(16,16)[t]}   \put(299,28){\oval(16,16)[b]} \put(307,28){\line(1,0){3}}
\put(195,30){\line(1,-4){5}}    \put(50,25){\vector(1,0){5}}  \put(53,21){\vector(-1,0){5}}  
\put(205,30){\line(-1,4){5}}    \put(80,27){\vector(1,0){5}}  \put(85,33){\vector(-1,0){5}}
\put(120,25){\vector(1,0){5}}   \put(277,20){\vector(1,0){5}} \put(278,22){\vector(-1,0){5}}
\put(299,20){\vector(1,0){5}}   \put(299,38){\vector(-1,0){5}}\put(318,38){\vector(-1,0){5}}
\put(317,36){\vector(1,0){5}}   \put(180,20){\vector(-1,0){5}}\put(180,40){\vector(1,0){5}}
\multiput(180,10)(0,40){2}{\line(1,0){20}}     \put(326,28){\line(1,0){5}}  
\multiput(238,10)(0,3){12}{$\cdot$}            \put(269,30){\vector(-1,0){5}}
\multiput(180,15)(0,30){2}{\circle{10}}        \multiput(195,13)(0,40){2}{\vector(-1,0){5}} 
\multiput(190,7)(0,40){2}{\vector(1,0){5}}                    \put(190,26){\vector(1,0){5}}
\put(215,33){\vector(-1,0){5}}  \put(263,28){\vector(1,0){5}} \put(290,30){\vector(-1,0){5}}
\end{picture}
\end{center}
Only in the pictures of $d_*$ we have circles and semicircles; for the pictures of $\delta_*$
the 'circles' and 'semicircles' are loops or curves, only their position relative to the 
upper half-plane and the lines Re$(z)=0,1,3$ is drawn correctly.
\begin{remark}
In the case of $\delta_3$ we have a lack of symmetry for $t\in[\frac{3}{4},\frac{11}{12}]$;
the roots of $\delta_3$ on this interval return on the trajectories of the roots for
$t\in[\frac{1}{12},\frac{1}{4}]$. 
\end{remark}
One can draw the braids $\delta_k(t)$ (in the following picture, the vertical threads 
correspond to real roots):
\begin{center}
\begin{picture}(360,120)      
\multiput(10,20)(120,0){3}{\multiput(0,0)(0,80){2}{\put(0,0){\line(1,0){100}}  
                     \multiput(25,-3)(25,0){3}{\line(0,1){6}}\put(7,-3){$\bullet $}
                     \put(37,-3){$\bullet $}\put(57,-3){$\bullet $}\put(87,-3){$\bullet $}} 
                     \put(22,-15){$0$} \put(47,-15){$1$} \put(72,-15){$3$}}  
\put(10,110){$\delta_1(t)$}    \put(130,110){$\delta_2(t)$}   \put(250,110){$\delta_3(t)$} 
\multiput(20,20)(120,0){2}{\multiput(0,0)(0,50){2}{\put(0,0){\line(0,1){30}}
         \put(30,0){\line(0,1){30}}\put(50,0){\line(0,1){30}}\put(80,0){\line(0,1){30}}}}
\put(20,70){\line(3,-2){30}}   \put(20,50){\line(3,2){12}}    \put(50,70){\line(-3,-2){12}}
\put(170,70){\line(1,-1){20}}  \put(260,30){\line(3,2){30}}   \put(260,50){\line(3,-2){12}}
\put(178,50){\oval(16,16)[tl]} \put(182,70){\oval(16,16)[br]} \put(290,30){\line(-3,2){12}}
\multiput(260,20)(0,70){2}{\put(0,0){\line(0,1){10}}
         \put(30,0){\line(0,1){10}}\put(50,0){\line(0,1){10}}\put(80,0){\line(0,1){10}}}
\multiput(70,60)(150,0){2}{\oval(20,20)[r]}    \multiput(100,60)(40,0){2}{\oval(20,20)[l]} 
\multiput(310,35)(0,50){2}{\oval(10,10)[r]}    \multiput(340,35)(0,50){2}{\oval(10,10)[l]}
\put(260,60){\oval(20,20)[r]}  \put(290,60){\oval(20,20)[l]}  \put(290,90){\line(-3,-2){12}} 
\put(260,70){\line(3,2){12}}   \put(310,50){\line(3,2){12}}   \put(340,70){\line(-3,-2){12}}
\multiput(310,40)(30,0){2}{\multiput(0,0)(0,30){2}{\put(0,0){\line(0,1){10}}}}
\multiput(260,90)(50,-20){2}{\line(3,-2){30}} 
\end{picture}
\end{center}
It is clear from this picture that $j_*(\delta_k(t))=x_k$, $k=1,2,3$.

The polynomials $\Gamma_k(t)$ could have real roots only for $t\in\{0,\frac{1}{2},1\}$;
for $t=\frac{1}{2}$ there are two real roots and two complex roots
$$ \begin{array}{ll}
   \Gamma_1(\frac{1}{2})=X^4-\frac{16}{3}X^3+7X^2-6X-\frac{1}{3}, & 
   x_1\in(\frac{2-\sqrt{7}}{3},0),x_2\in(2+\sqrt{3},4)                       \\
   \Gamma_2(\frac{1}{2})=X^4-\frac{16}{3}X^3+10X^2-\frac{1}{3},   & 
   x_1\in(\frac{2-\sqrt{7}}{3},0),x_2\in(0,2-\sqrt{3}).
\end{array} $$
The signs of imaginary parts of the roots of $\Gamma_k(t)$ near $t=0$ are given by the signs 
of coefficients of $b$ and the free terms in the following formulae ($R$ and $S$ are 
polynomials in $a,b$ and $\sin 2\pi t,\cos 2\pi t$)
$$ \begin{array}{l}
   {\rm Im}(\Gamma_1(a+bi))=b[4a(a^2-4a+3)+(1-\cos 2\pi t)(a-3)+bR]+
                                            \frac{\sin 2\pi t}{2}(6a-a^2)     \\
   {\rm Im}(\Gamma_2(a+bi))=b[4a(a^2-4a+4-\cos 2\pi t)+bS]-2a^2\sin 2\pi t.
\end{array} $$
\begin{lemma}\label{lema44}
a) The polynomials $\Gamma_1(t)$ have no root on the lines ${\rm Re}(z)=0$ and 
${\rm Re}(z)=3$.

\noindent b) The polynomials $\Gamma_2(t)$ have no root on the lines ${\rm Re}(z)=0$ and 
${\rm Re}(z)=1$.
\end{lemma}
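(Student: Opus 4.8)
The plan is to eliminate the parameter $t$ at the very start. Write $\Gamma_k(X)=f_k(X)+e^{2\pi it}g_k(X)$, splitting off the $t$-dependent part, so that
$$ f_1=X^4-\tfrac{16}{3}X^3+\tfrac{13}{2}X^2-3X-\tfrac13,\quad g_1=-\tfrac12X^2+3X, $$
$$ f_2=X^4-\tfrac{16}{3}X^3+8X^2-\tfrac13,\quad g_2=-2X^2. $$
A point $a+bi$ with $b\in\R$ (and $a$ one of the fixed values $0,1,3$) is a root of some $\Gamma_k(t)$, $t\in[0,1]$, exactly when $f_k(a+bi)=-e^{2\pi it}g_k(a+bi)$; since $e^{2\pi it}$ runs over the whole unit circle, such a $t$ exists if and only if $|f_k(a+bi)|=|g_k(a+bi)|$ (and if $g_k(a+bi)=0$ this forces $f_k(a+bi)=0$, the same condition). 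Hence the line ${\rm Re}(z)=a$ carries a root of the family if and only if
$$ \phi_{k,a}(b):=|f_k(a+bi)|^2-|g_k(a+bi)|^2=0\quad\text{for some }b\in\R, $$
and the lemma reduces to the four assertions $\phi_{k,a}>0$ on $\R$ for $(k,a)\in\{(1,0),(1,3),(2,0),(2,1)\}$.

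Because $f_k,g_k$ have real coefficients, $|f_k(a+bi)|^2=f_k(a+bi)\,f_k(a-bi)$ is an even polynomial in $b$; thus each $\phi_{k,a}$ is a polynomial in $u=b^2$, of degree $4$ with leading coefficient $1$ arising from $|X^4|^2$. I would therefore compute the four quartics $\phi_{k,a}(u)$ explicitly, reading off the real and imaginary parts of $f_k(a+bi)$ and $g_k(a+bi)$; the imaginary parts are precisely the expressions displayed just before the lemma, and the real parts come out the same way. A couple of simplifications keep the bookkeeping light: at $a=3$ one finds $g_1(3+bi)=\tfrac92+\tfrac12b^2$ is real and ${\rm Im}\,f_1(3+bi)=-\tfrac{20}{3}b^3$, while at $a=0$ both $g_1(bi)$ and $g_2(bi)$ are trivial since only the $X^2$ and $X$ terms survive.

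The outcome I expect — and which makes the approach succeed — is that each of the four quartics in $u$ has \emph{all coefficients strictly positive}; for instance $\phi_{1,0}(u)=u^4+\tfrac{139}{9}u^3+\tfrac{28}{3}u^2+\tfrac{13}{3}u+\tfrac19$ and $\phi_{2,0}(u)=u^4+\tfrac{112}{9}u^3+\tfrac{178}{3}u^2+\tfrac{16}{3}u+\tfrac19$, with the remaining cases $(1,3)$ and $(2,1)$ behaving the same way (all coefficients again positive, e.g.\ constant terms $\tfrac{6160}{36}$ and $\tfrac{64}{9}$). Since $u=b^2\ge0$, positivity of every coefficient yields $\phi_{k,a}(b)>0$ for all real $b$, so no root lies on the listed lines. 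The only genuine obstacle is organizational: performing the four expansions without arithmetic slips. It is worth stressing why this route is needed at all — unlike Lemma~\ref{lema43}, there is no small parameter $\varepsilon$ to force $|b|$ small, since $e^{2\pi it}$ sweeps the entire circle; passing to the modulus equation $|f_k|^2=|g_k|^2$ is exactly what removes $t$ and converts the problem into an unconditional one-variable positivity check.
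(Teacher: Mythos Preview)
Your approach is correct. The reformulation --- $\Gamma_k(a+bi)=0$ for some $t\in[0,1]$ iff $|f_k(a+bi)|=|g_k(a+bi)|$ --- is exactly right, and the four quartics in $u=b^2$ do have all coefficients strictly positive. Your displayed values for $\phi_{1,0}$, $\phi_{2,0}$ and the constant terms at $(1,3)$, $(2,1)$ are accurate; the remaining coefficients come out to $\frac{175}{9},\frac{385}{3},\frac{1024}{3}$ for $\phi_{1,3}$ and $\frac{52}{9},\frac{52}{3},\frac{64}{3}$ for $\phi_{2,1}$, all positive.

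The paper's proof takes a different, case-by-case route. Instead of eliminating $t$ at the outset via the modulus condition, it writes $e^{2\pi it}=\cos 2\pi t+i\sin 2\pi t$, substitutes $z=a+bi$, and separates real and imaginary parts to get a system of two real equations in $b,\cos 2\pi t,\sin 2\pi t$. Two distinct strategies are then applied: for $(k,a)=(1,3)$ and $(2,0)$ the two equations force incompatible size bounds on $|b|$ (one forces $|b|$ large, the other forces $|b|$ small), while for $(1,0)$ and $(2,1)$ the system is treated as linear in $\cos,\sin$, solved, and the constraint $\sin^2+\cos^2=1$ yields a polynomial in $b^2$ with positive coefficients. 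Your method is more uniform and keeps the degree down (quartics in $u$ rather than quintics in $b^2$); the paper's ad hoc bounds, on the other hand, expose a little more structure in the two ``easy'' cases (e.g.\ that a root on ${\rm Re}(z)=3$ would require $b^2\ge 12$). Either route settles the lemma.
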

\begin{proof}
a) If $3+bi$ is a root of $\Gamma_1(t)$, we find that 
$$ \left\lbrace  \begin{array}{l}
  b^4+(\frac{1}{2}\cos 2\pi t-\frac{25}{2})b^2+\frac{9}{2}\cos 2\pi t-\frac{83}{6}=0 \\
  \frac{20}{3}b^3-\frac{\sin 2\pi t}{2}b^2+\frac{9}{2}\sin 2\pi t=0     
                 \end{array} \right. . $$
The first equation gives $b^2\geq 12$, hence $|b|>3$. From 
$$ \left\vert\frac{20}{3}b-\frac{1}{2}\sin 2\pi t\right\vert b^2>10\cdot 9>
          \frac{9}{2}\vert\sin 2\pi t\vert  $$ 
we find a contradiction. If $bi$ is a root of $\Gamma_1(t)$, we have the system of equation, 
linear in $\sin$ and $\cos$:
$$ \left\lbrace  \begin{array}{l}
  b^2\cos 2\pi t-6b\sin 2\pi t=-2b^4+13b^2+\frac{2}{3}                   \\
  6\cos 2\pi t+b\sin 2\pi t=-\frac{32}{3}b^2+6.     
                 \end{array} \right.  $$
The relation $(\sin 2\pi t)^2+(\cos 2\pi t)^2=1$ gives
$$ (4b^4-216b^2-12)^2+(6b^5+153b^3-110b)^2-9(b^3+36b)^2=0,  $$
a polynomial in $b^2$ with positive coefficients.

\noindent b) If $bi$ is a root of $\Gamma_2(t)$, we find that
$$ b^4-(8-2\cos 2\pi t)b^2-\frac{1}{3}=0 \mbox{ and }
                                    \frac{16}{3}b^3+2b^2\sin 2\pi t=0. $$
The first equation gives $b^2\geq 6$ and the second equation gives $|b|\leq \frac{3}{8}$.
If $\Gamma_2(t)$ has a root $1+bi$, we find the system
$$ \left\lbrace  \begin{array}{l}
  (2b^2-2)\cos 2\pi t+4b\sin 2\pi t=-b^4-2b^2-\frac{10}{3}                \\
  -2b\cos 2\pi t+(b^2-1)\sin 2\pi t=-\frac{2}{3}b^3-2b.     
                 \end{array} \right.  $$
As in part a), we get 
$$ (10b^5+20b^3+8b)^2+(-3b^6+5b^4+20b^2+10)^2-36(b^2+1)^4=0,  $$
another polynomial in $b^2$ with positive coefficients.
\end{proof}
The roots of $\Gamma_*(t)$ and those of $\Gamma_*(1-t)$ are conjugate. Putting together 
all these facts, we obtain the picture
\begin{center}
\begin{picture}(360,60)        
\multiput(10,30)(180,0){2}{\put(0,0){\line(1,0){160}}\multiput(28,-20)(0,3){13}{$\cdot$} 
    \multiput(7,-3)(40,0){4}{$\bullet$}  \put(27,-25){$0$}  \put(57,-25){$1$} 
\put(117,-25){$3$}            \put(60,-3){\line(0,1){6}}    \put(120,-3){\line(0,1){6}}
\put(18,0){\oval(16,16)}      \put(18,-8){\vector(1,0){5}}}
\put(5,50){$\Gamma_1(t)$}     \put(185,50){$\Gamma_2(t)$}   \put(148,30){\oval(16,16)} 
\put(148,22){\vector(1,0){5}} \put(232,30){\oval(16,16)}    \put(232,38){\vector(-1,0){5}}    
\multiput(128,10)(0,3){13}{$\cdot$}           \multiput(248,10)(0,3){13}{$\cdot$}
\multiput(80,30)(220,0){2}{\put(0,0){\oval(40,16)}
\put(0,-8){\vector(1,0){5}}   \put(0,8){\vector(-1,0){5}}}
\end{picture}
\end{center}
and the braids 
\begin{center}
\begin{picture}(360,100)        
\multiput(30,20)(180,0){2}{\multiput(0,0)(0,60){2}{\line(1,0){130}
\multiput(-115,-3)(30,0){4}{$\bullet$}}\put(32,-15){$0$} \put(57,-15){$1$}\put(87,-15){$3$}
\put(35,-3){\line(0,1){6}}     \put(60,-3){\line(0,1){6}}  \put(90,-3){\line(0,1){6}} }
\put(20,85){$\Gamma_1(t)$}     \put(190,85){$\Gamma_2(t)$}
\multiput(50,20)(90,0){2}{\put(0,0){\line(1,3){10}}         \put(0,60){\line(1,-3){10}}}  
\put(248,50){\line(1,3){10}}   \put(248,50){\line(1,-3){10}}
\multiput(78,20)(210,0){2}{\put(0,0){\line(1,2){12}}        \put(0,60){\line(1,-2){30}}
\put(30,60){\line(-1,-2){12}}} \put(230,20){\line(1,3){10}} \put(230,80){\line(1,-3){10}}
\end{picture}
\end{center}
As a consequence we find that $j_*(\Gamma_k)=x_{k+1}$ for $k=1,2$.


\section{Complements: higher dimensions and real configurations}
 
First we have a look at $Q\mathcal{F}_n$ for $n\geq 4$. 
The space $Q\mathcal{F}_4\subseteq\mathbb{C}^4$ is the complement of the arrangement 
with six diagonal hyperplanes $H_{ij}$ and six hyperquadrics $S_{ij}^{(4)}$
$$ H_{ij}:z_i=z_j,\, S_{ij}^{(4)}:3z_i^2+4z_iz_j+3z_j^2-5(z_h+z_k)(z_i+z_j)+10z_hz_k=0. $$
Every hyperquadric $S_{ij}^{(4)}$ has a line of critical points $z_1=z_2=z_3=z_4$. 
The space $Q\mathcal{F}_5\subseteq\mathbb{C}^5$ is the complement of the arrangement 
with ten diagonal hyperplanes $H_{ij}$ and ten hypercubics $S_{ij}^{(5)}$
$$ \begin{array}{ll}
S_{ij}^{(5)}: & 4z_i^3+6z_i^2z_j+6z_iz_j^2+4z_j^3-2(z_h+z_k+z_l)(3z_i^2+4z_iz_j+3z_j^2)+ \\
              & +10(z_hz_k+z_hz_l+z_kz_l)(z_i+z_j)-20z_hz_kz_l=0. 
\end{array}      $$
More singularities are here: $S_{ij}^{(5)}$ contains 3 two-planes of singular points:
$$ P_{ijhk}:z_i=z_j=z_h=z_k,\, P_{ijhl}:z_i=z_j=z_h=z_l,\, P_{ijkl}:z_i=z_j=z_k=z_l. $$

In general $Q\mathcal{F}_n\subseteq\mathbb{C}^n$ is the complement of the arrangement 
with $n\choose 2$ diagonal hyperplanes $H_{ij}$ and $n\choose 2$ hypersurfaces 
$S_{ij}^{(n)}$ given by homogeneous polynomials of degree $n-2$ ($S_{ij}$ is symmetric 
in variables $z_i,z_j$ and linear in the elementary symmetric polynomials of the rest of 
$n-2$ variables).

For the proof of Proposition \ref{propS} we need the following Lemma:
\begin{lemma}\label{lemaS}
Let $U$ be a connected open dense subset and $H$ an algebraic hypersurface in $\mathbb{C}^n$.
Then $U\setminus H$ is a connected open dense subset in $\mathbb{C}^n$ and
$$ i_*:\pi_1(U\setminus H)\twoheadrightarrow\pi_1(U) $$
is a surjective homomorphism.

In particular 
$$ \tilde{\textit{\j}}_*:Q\mathcal{P}_{n-1}=\pi_1(Q\mathcal{F}_{n-1})
        \twoheadrightarrow\pi_1(\mathcal{F}_{n-1})=\mathcal{P}_{n-1}  $$
is a surjective homomorphism.
\end{lemma}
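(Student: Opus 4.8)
The plan is to isolate the purely topological content as the general statement about $U\setminus H$ and only afterwards specialize. First I would dispose of the easy properties. Openness of $U\setminus H=U\cap(\mathbb{C}^n\setminus H)$ is immediate, since $H$ is the zero locus of a polynomial, hence closed, and $U$ is open. For density I would use that a proper algebraic subset of $\mathbb{C}^n$ is closed with empty interior: given any nonempty open $W\subseteq\mathbb{C}^n$, density of $U$ gives $W\cap U\neq\emptyset$, and $H$ having empty interior forces $W\cap U\not\subseteq H$, so $W$ meets $U\setminus H$; thus $U\setminus H$ is dense.

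The substantive part is connectedness and $\pi_1$-surjectivity, and for both I would run the same general-position argument, resting on the fact that an algebraic hypersurface has \emph{real} codimension $2$. The key preliminary is to stratify $H$: its smooth locus $H_{\mathrm{reg}}$ is a real submanifold of real codimension $2$, while the singular locus and its iterated singular loci form strata of real codimension $\geq 4$. For connectedness, take $p,q\in U\setminus H$; since $U$ is a connected open subset of $\mathbb{R}^{2n}$ it is path-connected, so I join them by a path $\gamma$ in $U$. Perturbing $\gamma$ rel endpoints to be transverse to every stratum of $H$, and noting that a $1$-dimensional path transverse to a stratum of real codimension $\geq 2$ must be disjoint from it, the perturbed path lies in $U\setminus H$; a $C^0$-small perturbation stays inside the open set $U$. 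Hence $U\setminus H$ is path-connected.

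For surjectivity of $i_*$ I would repeat the construction, raising the dimension only in the homotopy direction: fix a basepoint $x_0\in U\setminus H$, take any loop $\gamma$ in $U$ based at $x_0$, and perturb it rel $x_0$ to be transverse to all strata of $H$. The same codimension count (domain dimension $1$ against codimension $\geq 2$) shows the perturbed loop $\gamma'$ avoids $H$ entirely, so $\gamma'$ is a loop in $U\setminus H$ homotopic to $\gamma$ inside $U$; therefore $[\gamma]=i_*[\gamma']$ and $i_*\colon\pi_1(U\setminus H)\twoheadrightarrow\pi_1(U)$ is onto.

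The hard part is making the transversality argument legitimate in the presence of singularities of $H$; I would handle this by invoking a Whitney stratification of the algebraic set $H$ (which always exists) together with the Thom transversality theorem for stratified targets, using crucially that the singular strata have real codimension $\geq 4$, so that $1$-dimensional test objects miss them after perturbation. Finally, to deduce the stated consequence I would apply the general lemma with $U=\mathcal{F}_{n-1}=\mathbb{C}^{n-1}\setminus\bigcup_{i\neq j}H_{i,j}$, which is connected, open, and dense in $\mathbb{C}^{n-1}$, and with $H=\bigcup_{i\neq j}S_{i,j}$, a single algebraic hypersurface (the zero set of the product of the defining polynomials of the $S_{i,j}$). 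Then $U\setminus H=Q\mathcal{F}_{n-1}$, and the surjection $i_*$ becomes exactly $\tilde{\textit{\j}}_*\colon Q\mathcal{P}_{n-1}\twoheadrightarrow\mathcal{P}_{n-1}$.
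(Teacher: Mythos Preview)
Your proof is correct and follows essentially the same approach as the paper: both arguments rest on stratifying $H$ and perturbing a smooth path or loop to be transverse to each stratum, then invoking the real codimension $\geq 2$ of every stratum to conclude the perturbed curve misses $H$. Your write-up is simply more explicit than the paper's---you spell out openness and density, the Whitney stratification, the codimension count, and the specialization to $U=\mathcal{F}_{n-1}$, $H=\bigcup S_{i,j}$---while the paper compresses all of this into a single sentence about finding a deformation transversal to $H\setminus\mathrm{Sing}(H)$ and to the strata of $\mathrm{Sing}(H)$.
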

\begin{proof}
If $\alpha:[0,1]\rightarrow U$ is a smooth path (or a smooth loop), one can find a small
deformation $\beta$, a smooth path (or a smooth loop), transversal to all the strata of
$Sing(H)$, the singular locus of $H$, and also transversal to $H\setminus Sing(H)$.
\end{proof}
As a consequence, from the diagram
$$ \begin{array}{ccccccccc} 
1 & \rightarrow & Q\mathcal{P}_{n-1} & \rightarrow  & Q\mathcal{B}_{n-1} & \rightarrow &
                                       \Sigma_{n-1} &   \rightarrow      & 1            \\
  &             & \downarrow         &              &  \downarrow        &             &
                                      \updownarrow  &                    &              \\
1 & \rightarrow & \mathcal{P}_{n-1}  & \rightarrow  &  \mathcal{B}_{n-1} & \rightarrow &
                                       \Sigma_{n-1} &   \rightarrow      & 1                                       
\end{array} $$
the homomorphism $j_*:Q\mathcal{B}_{n-1}\twoheadrightarrow\mathcal{B}_{n-1}$ is surjective, too.

\noindent\textit{Proof of Proposition \ref{propS}.} The argument for the surjectivity of the 
homomorphisms
$$ \tilde{\textit{\j}}_*:R\mathcal{P}_n\twoheadrightarrow\mathcal{P}_n\mbox{ and }    
   j_*:R\mathcal{B}_n\twoheadrightarrow\mathcal{B}_n $$
is similar: the roots of $Q(X)$, the derivative of the polynomial 
$P(X)=\prod_{i=1}^n(X-\alpha_i)$ are (locally) holomorphic functions in 
$\alpha_1,\ldots,\alpha_n$ (because the roots $\{\beta_i\}$ are distinct) and the equations
$$ \alpha_j=\alpha_k \mbox{ and }\int_{\beta_k}^{\beta_j}Q(t)dt=0 $$
are given locally by holomorphic equations.   \hfill $\square$

Secondly, we analyse the derivative as a fibration in the case of real configuration spaces. It is
obvious that the covering $\mathcal{F}_n(\mathbb{R})\rightarrow\mathcal{C}_n(\mathbb{R})$ is
completely trivial:
$$ \begin{array}{ll}
   \mathcal{F}_n(\mathbb{R})\approx\mathcal{C}_n(\mathbb{R})\times\Sigma_n\stackrel{pr_1}
                         {\longrightarrow}\mathcal{C}_n(\mathbb{R})=
                         \{(x_1,x_2,\ldots,x_n)\in\mathbb{R}^n\mid x_1<x_2<\ldots<x_n\}, \\
   \mathcal{C}_n(\mathbb{R})\approx\mathbb{R}^n,\,(x_1,x_2,\ldots,x_n)
   \mapsto(x_1,\ln(x_2-x_1),\ldots,\ln(x_n-x_{n-1})).  
\end{array}   $$ 
Now we define the min-max $m(Q)$ and max-min $M(Q)$ of $Q(X)$, an $(n-1)$-degree polynomial 
with real distinct roots
($n\geq 3$).
\begin{definition}\label{def10}
If $Q(X)=n(X-b_1)(X-b_2)\ldots(X-b_{n-1})$, where $b_1<\ldots<b_{n-1}$, and
$P(X)=\int_0^XQ(t)dt$, then
$$ \begin{array}{l}
m(Q)={\rm max}\,P(b_k),\, k=n-1,n-3,\ldots \mbox{ and }       \\
    M(Q)={\rm min}\,P(b_k),\, k=n-2,n-4,\ldots   
\end{array} $$
\end{definition}
\begin{remark}
For $n=2$ and $Q(X)=2(X-b)$ (here $P(X)=X^2-2bX$), we define $m(Q)=-b^2$ and $M(Q)=\infty$. 
\end{remark}
The version with real points of the fibration given by derivative is contained in the 
following Theorem: here the new restricted configuration space is 
$R\mathcal{C}_n(\mathbb{R})=\mathcal{C}_n(\mathbb{R})=\mathcal{C}_n\cap\mathbb{R}^n$ and
the new restricted base configuration space, 
$Q\mathcal{C}_{n-1}(\mathbb{R})$, is defined in the statement of the Theorem.
\begin{theorem}\label{thm2}
a) For any $n\geq 2$ and any monic polynomial $P(X)\in\mathcal{C}_n(\mathbb{R})$, its 
derivative $D(P(X))=Q(X)$ has $n-1$ distinct real roots and $Q(X)$ verifies the inequality 
$m(Q)<M(Q)$.

\noindent b) The image of derivative 
$D:\mathcal{C}_n(\mathbb{R})\rightarrow \mathcal{C}_{n-1}(\mathbb{R})$ is the set
$$ \begin{array}{lll}
Q\mathcal{C}_{n-1}(\mathbb{R}) & = & \{P'(X)\mid P(X)=
           \prod_{i=1}^n(X-a_i),a_1<a_2<\ldots<a_n \}=                        \\
& = & \{Q(X)=n\prod_{i=1}^{n-1}(X-b_i),b_1<b_2<
                               \ldots<b_{n-1}\mid m(Q)<M(Q) \},  
\end{array}   $$
an open subset of $\mathcal{C}_{n-1}(\mathbb{R})$. We have 
$Q\mathcal{C}_{n-1}(\mathbb{R})=\mathcal{C}_{n-1}(\mathbb{R})$ if and only if $n=1,2,3$. 

\noindent c) For any $n\geq 2$ there is a homeomorphism 
$$ \mathcal{C}_n(\mathbb{R})\stackrel{D\times Ev_0}{\longrightarrow}
     Q\mathcal{C}_{n-1}(\mathbb{R})\times (0,1),  $$
where
$$ Ev_0(P(X))=\begin{cases} 
   \frac{P(0)+m(P')}{P(0)+m(P')-1}   & \mbox{ if }n=2 \mbox{ and }    \\
   \frac{P(0)+M(P'(X))}{M(P')-m(P')} & \mbox{ if }n\geq 3.             \\
              \end{cases}   $$     
\end{theorem}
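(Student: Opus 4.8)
The plan is to reduce all three parts to elementary facts about the graph of the primitive of $Q$. Throughout I write $\tilde{P}(X)=\int_0^X Q(t)\,dt$, so that every monic primitive of $Q$ has the form $P=\tilde P+c$ with $c=P(0)$, and the critical points of $P$ are exactly the roots $b_1<\cdots<b_{n-1}$ of $Q=P'$. Since $P$ is increasing on $(b_{n-1},\infty)$ (there $Q>0$), the critical points alternate as local minima at $b_{n-1},b_{n-3},\dots$ and local maxima at $b_{n-2},b_{n-4},\dots$, which is exactly the index pattern in the definitions of $m(Q)$ and $M(Q)$.

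For part (a) I would first apply Rolle's theorem: a $P\in\mathcal C_n(\mathbb R)$ with roots $a_1<\cdots<a_n$ has a critical point in each interval $(a_i,a_{i+1})$, which accounts for all $n-1$ roots of $Q$ and shows they are real and distinct. Then I read off the signs of the critical values: on $(a_i,a_{i+1})$ the function $P$ keeps the constant sign $(-1)^{\,n-i}$, so $P(b_i)$ has sign $(-1)^{\,n-i}$; hence every local minimum value of $P$ is negative and every local maximum value is positive. Translating by the constant $P(0)$ turns this into $m(Q)<-P(0)<M(Q)$, and in particular $m(Q)<M(Q)$.

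Part (b) has three pieces. The first description is by definition the image of $D$, and its inclusion into $\{m(Q)<M(Q)\}$ is precisely part (a). For the reverse inclusion I start from $Q$ with $m(Q)<M(Q)$, pick any $\gamma\in(m(Q),M(Q))$, and count the intersections of the line $y=\gamma$ with the graph of $\tilde P$: on $(b_{n-1},\infty)$ and on each interval between consecutive critical points the alternating monotonicity forces exactly one transversal crossing, because $\gamma$ lies strictly above every local minimum value (all $\le m(Q)$) and strictly below every local maximum value (all $\ge M(Q)$); the leftmost unbounded interval contributes one more crossing, its end behaviour handled by the parity of $n$. This yields exactly $n$ distinct real roots, so $\tilde P-\gamma\in\mathcal C_n(\mathbb R)$ has derivative $Q$. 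Openness follows because the $b_i$, the critical values $\tilde P(b_i)$, and therefore $m,M$ (each a max or min of a fixed subfamily of these continuous functions) depend continuously on $Q$. For the dichotomy I would note that $m(Q)<M(Q)$ holds for \emph{every} $Q$ iff in every configuration each local minimum value lies below each local maximum value; adjacent extrema always satisfy this, so a failure needs a non-adjacent (minimum, maximum) pair, which is available precisely when $\tilde P$ can carry two local maxima, i.e. when the number of maxima $\lfloor(n-1)/2\rfloor$ is at least two. When it cannot, the single maximum dominates both neighbouring minima and the two spaces coincide; when it can, I would exhibit an explicit $Q$ whose roots fall into two well-separated clusters at different heights, so that a minimum of the higher cluster sits above a maximum of the lower one, giving $m(Q)\ge M(Q)$.

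Part (c) is a fibered-homeomorphism argument. Fix $Q\in Q\mathcal C_{n-1}(\mathbb R)$; by the crossing count of (b) the fibre $D^{-1}(Q)\cap\mathcal C_n(\mathbb R)$ is exactly $\{\tilde P+c:-M(Q)<c<-m(Q)\}$, parametrised homeomorphically by $c=P(0)$ over a nonempty open interval. For $n\ge 3$ the affine map $c\mapsto(c+M(Q))/(M(Q)-m(Q))$ is the fibre component of $Ev_0$ and carries this interval bijectively onto $(0,1)$; for $n=2$, where $M(Q)=\infty$, the interval is a half-line and the Möbius normalisation $v\mapsto v/(v-1)$ with $v=c+m(Q)$ achieves the same, which is the stated formula. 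I then assemble $\Psi:=D\times Ev_0$: it is continuous (as $D$ and $m,M$ are), surjective onto $Q\mathcal C_{n-1}(\mathbb R)\times(0,1)$ by (b) together with the fibrewise surjectivity above, and injective since $P$ is recovered from $(Q,c)$ via $P=\int_0^X Q+c$; its inverse sends $(Q,t)$ to $\int_0^X Q+c(Q,t)$ with $c(Q,t)$ gotten by inverting the affine (resp. Möbius) map, again continuous because $m,M$ are. The main obstacle I expect is twofold: in (b) the real content is the construction of a $Q$ with $m(Q)\ge M(Q)$ and the bookkeeping pinning down the exact range of $n$, and in (c) the delicate point is the \emph{uniform} continuity of the fibrewise normalisation over all of $Q\mathcal C_{n-1}(\mathbb R)$, notably the boundary case $n=2$ where $M\equiv\infty$ forces the Möbius formula, so that $D\times Ev_0$ is a genuine homeomorphism and not merely a fibrewise bijection.
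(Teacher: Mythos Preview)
Your proposal is correct and tracks the paper closely: Rolle plus sign alternation for (a), the same intermediate-value crossing count and continuity of $m,M$ for the characterisation and openness in (b), and the explicit fibred inverse for (c). The one substantive divergence is the strictness half of the dichotomy in (b). You pin down the threshold conceptually---$m(Q)<M(Q)$ can fail only via a non-adjacent $(\min,\max)$ pair, which first exists when $\lfloor (n-1)/2\rfloor\ge 2$, i.e.\ $n\ge 5$---and then promise a ``two well-separated clusters'' example without actually building one. The paper instead produces a concrete witness for $n=5$, namely $P_5$ with $P_5'(X)=5X(X-1)(X-3)(X-\pi)$ and $P_5(0)=0<P_5(\pi)$, so that $M(P_5')\le 0<m(P_5')$, and then an explicit inductive step: from such a $P_n$ one sets $P_{n+1}(X)=\tfrac{n+1}{n}\int_0^X (t-b_n)P_n'(t)\,dt$ with $b_n$ far enough to the left to preserve $P_{n+1}(0)=0<P_{n+1}(\pi)$. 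Your structural argument for why $n\le 4$ forces $m<M$ is cleaner than the paper's pictorial check, but for $n\ge 5$ the explicit-example-plus-induction route is what actually closes the proof; your clustering heuristic would still need to be turned into a genuine construction.
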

\begin{proof}
a) For the polynomial $P(X)=\prod_{i=1}^n(X-a_i)$, where $a_1<a_2<\ldots<a_n$, its 
derivative $P'(X)$ has $n-1$ real roots $b_i\in(a_i,a_{i+1})$, $i=1,2,\ldots,n-1$ and also
$$ P(b_{n-1})<0,\,P(b_{n-2})>0,\,P(b_{n-3})<0,\,\ldots ,\,(-1)^{n-1}P(b_1)>0,  $$
therefore $m(P')<-P(0)<M(P')$.

\noindent b) For any $n\geq 2$, if 
$Q(X)=n\prod_{i=1}^{n-1}(X-b_i)$, with $b_1<b_2<\ldots<b_{n-1}$, 
satisfies $m(Q)<M(Q)$, then $P(X)=\int_0^XQ(t)dt-c$ with $c\in(m(Q),M(Q))$ has $n$
real distinct roots (from inequalities
$$ P(b_{n-1})<0,\,P(b_{n-2})>0,\,P(b_{n-3})<0,\,\ldots,\,(-1)^{n-1}P(b_1)>0,   $$
we find real roots $a_n\in(b_{n-1},\infty)$, $a_{n-1}\in(b_{n-2},b_{n-1})$,
$a_{n-2}\in(b_{n-3},b_{n-2}),\ldots$, $a_2\in(b_1,b_2)$, $a_1\in(-\infty,b_1)$);
this shows that $Q(X)$ is in the image of $D$. It is obvious that the fiber of such 
polynomial is
$$ D^{-1}(Q(X))=\left\lbrace\int_0^XQ(t)dt-c\mid c\in(m(Q),M(Q))\right\rbrace.   $$
For a polynomial $P(X)$ of degree $n=2,3,4$, the picture shows that always we 
have $m(P')<M(P')$:
\begin{center}
\begin{picture}(360,90)        
\multiput(20,30)(90,0){4}{\put(0,0){\vector(1,0){60}}\put(20,-20){\vector(0,1){70}} }
\put(-5,80){$n=2$} \put(85,80){$n=3$} \put(175,80){$n=4$}            \put(265,80){$n=5$}
\put(124,20){\oval(12,40)[tl]}        \put(124,30){\oval(12,20)[tr]} \put(21,19){$m$}
\put(140,30){\oval(20,20)[bl]}        \put(140,60){\oval(40,80)[br]} \put(118,13){$m$}
\put(214,50){\oval(12,52)[bl]}        \put(214,30){\oval(12,12)[br]} \put(223,20){$m$}
\put(230,30){\oval(20,20)[t]}         \put(246,30){\oval(12,20)[bl]} \put(297,37){$m$}
\put(246,50){\oval(12,60)[br]}        \put(310,15){\oval(12,30)[tl]} \put(290,17){$M$}
\put(310,25){\oval(10,10)[tr]}        \put(320,25){\oval(10,10)[b]}  \put(207,43){$M$}
\put(330,25){\oval(10,40)[tl]}        \put(330,41){\oval(8,8)[tr]}   \put(133,43){$M$}
\put(338,41){\oval(8,8)[bl]}          \put(338,61){\oval(8,48)[br]}  \put(335,15){$\pi$}
\put(50,61.5){\oval(40,70)[b]}        \put(338,27){\line(0,1){6}}
\multiput(310,33)(3,0){12}{$\cdot$}   \multiput(220,38)(3,0){6}{$\cdot$}
\multiput(205,20)(3,0){5}{$\cdot$}    \multiput(130,16)(3,0){7}{$\cdot$}
\multiput(128,38)(-3,0){6}{$\cdot$}   \multiput(40,23)(3,0){7}{$\cdot$}
\end{picture}
\end{center}
The graph of $P_5(X)=X^5-(\frac{5}{4}\pi+5)X^4+(\frac{20}{3}\pi+5)X^3-\frac{15}{2}\pi X^2$ 
is given in the picture ($n=5$). We have 
$$ P'_5(X)=5X(X-1)(X-3)(X-\pi),\,P(0)=0,\,P(\pi)>\frac{\pi^3}{4}, $$
hence $m(P')=P(\pi)>P(0)=M(P')$.
By induction ($n\geq 5$), take $P_n(X)$ an $n$-degree monic polynomial with
$$ P'_n(X)=n(X-b_{n-1})(X-b_{n-2})\ldots(X-b_5)X(X-1)(X-3)(X-\pi), $$
$b_{n-1}<b_{n-2}<\ldots<b_5<0$
and $P_n(0)=0$, $P_n(\pi)>0$, hence $m(P'_n)>M(P'_n)$. Take 
$$ Q_n(X)=\frac{n+1}{n}(X-b_n)P'_n(X). $$ Then define
$$ P_{n+1}(X)=\int_0^XQ_n(t)dt=\frac{n+1}{n}
                                     \left[(X-b_n)P_n(X)-\int_0^XP_n(t)dt\right].  $$
Obviously $P_{n+1}(0)=0$ and, if we choose 
$$ b_n<{\rm min}\left(b_{n-1},\pi-P_n(\pi)^{-1}\int_0^{\pi}P_n(t)dt\right), $$
we obtain $m(P'_{n+1})\geq P_{n+1}(\pi)>P_{n+1}(0)\geq M(P'_{n+1})$. 

It is clear that, for $n\geq 5$, $Q\mathcal{C}_{n-1}(\mathbb{R})$ is not a dense 
subset of $\mathcal{C}_{n-1}(\mathbb{R})$.

\noindent c) The map $D\times Ev_0$ is continuous and its inverse is the continuous map
$$ (Q(X),c)\mapsto\begin{cases} 
   \int_0^XQ(t)dt+\frac{c}{c-1}-m(Q) & \mbox{ if }n=2 \mbox{ and }    \\
   \int_0^XQ(t)dt+[M(Q)-m(Q)]c-M(Q)  & \mbox{ if }n\geq 3.            \\
              \end{cases}   $$     
\end{proof}
\begin{remark}
For $n=2$ we have $Q\mathcal{C}_2(\mathbb{R})=\mathbb{R}\cap Q\mathcal{C}_2$,
but there is no such relation for higher degrees: 

a) the polynomial $Q_3(X)=4(X+5)X(X-5)$ belongs to 
$Q\mathcal{C}_3(\mathbb{R})\setminus Q\mathcal{C}_3$ (because $P_4(-5)$ and $P_4(5)$ are equal);

b) $Q_5(X)=5X(X-1)(X-3)(X-\pi)$ belongs to 
$\mathbb{R}^5\cap Q\mathcal{C}_5\setminus Q\mathcal{C}_5(\mathbb{R})$ (the values of
$P_6(X)=\int_0^XQ_5(t)dt$ at $0,1,3$ and $\pi$ are distinct).
\end{remark}


\mbox{ }

\end{document}